\theoremstyle{plain}
\newtheorem{teor}{Theorem}[section]
\newtheorem*{mteor}{Main Theorem}
\newtheorem*{mcor}{Corollary}
\newtheorem{cor}{Corollary}[section]
\newtheorem{prop}[teor]{Proposition}
\newtheorem{lemma}{Lemma}[section]
\newtheorem{remark}{Remark}[section]
\theoremstyle{definition}
\def\C{\mathbb{C}}
\def\S{\mathbb{S}}
\def\m{\mathcal{M}_{\Gamma}}
\def\F{\mathcal{F}}
\def\r{_a|_{V_a}}
\def\L{\mathcal{L}}
\def\A{\mathcal{A}}
\def\D{\mathbb{D}}
\def\K{\mathring{K}}
\def\P{\mathcal{P}}
\def\R{\mathbb{R}}
\def\Z{\mathbb{Z}}
\def\Q{\mathbb{Q}}
\def\H{\mathbb{H}}
\def\l0{_{a_0}}
\def\i0{_{\iota_0}}
\def\ll{_{a}}
\def\f{_{\theta}}
\begin{document}
\title{Mating quadratic maps with the modular group III:\\ The modular Mandelbrot set}

\author{Shaun Bullett 
\and 
Luna Lomonaco}

\maketitle
\begin{abstract}
We prove that there exists a homeomorphism $\chi$ between the connectedness locus $\m$ for the family $\F_a$
of $(2:2)$ holomorphic correspondences introduced by Bullett and Penrose,
and the parabolic Mandelbrot set $\mathcal{M}_1$.
The homeomorphism $\chi$ is dynamical ($\F_a$ is a mating between $PSL(2,\Z)$ and $P_{\chi(a)}$), 
it is conformal on the interior of $\m$, and  it extends to a 
homeomorphism between suitably defined neighbourhoods in the respective one parameter moduli spaces.

Following the recent proof by Petersen and Roesch that $\mathcal{M}_1$ is homeomorphic to the classical Mandelbrot set $\mathcal{M}$, we deduce that $\m$ is homeomorphic to $\mathcal{M}$.
\end{abstract}
{\small \textbf{MSC2020:}
37F05, 37F10, 37F44, 37F46}.

\section{Introduction}\label{intro}
Parallels between the dynamical behaviour of iterated rational maps and Kleinian groups have been evident since the work of Fatou and Julia
over a century ago: see Sullivan's celebrated {\it dictionary} between the two areas in \cite{S}. In 1994 the first examples of iterated holomorphic correspondences
on the Riemann sphere behaving as {\it matings} between a rational map and a Kleinian group were exhibited by the first author together with Christopher Penrose \cite{BP}.  
The rational map was a quadratic polynomial $Q_c:z\to z^2+c$ and the Kleinian group was the modular group $\Gamma=PSL(2,\Z)$, equipped with the generators 
$\alpha(z)=z+1$ and $\beta(z)=z/(z+1)$.

In \cite{BP} a $(2:2)$ holomorphic correspondence $\F:z \to w$ is termed a {\it mating} between $Q_c$ and $PSL(2,\Z)$ if there exists a completely invariant open 
simply-connected subset $\Omega$ of the sphere such that:
\begin{enumerate}
\item  there is a conformal bijection 
 $\phi: \Omega \rightarrow \H$ conjugating $\F|_\Omega$ to $\alpha |_\H$ and $\beta |_\H$ (where $\H$ denotes the upper half-plane), and
 \item $\widehat \C \setminus \Omega =  \Lambda_- \cup \Lambda_+$, where $\Lambda_- \cap \Lambda_+$ is a single point 
 and there exist homeomorphisms $\varphi_{\pm}: \Lambda_{\pm} \rightarrow K_c$ conjugating
 $\F|_{\Lambda_-}$ to $Q_{c}|_{K_c}$ and  $\F|_{\Lambda_+}$ to $Q^{-1}_{c}|{K_c}$ respectively.
 \end{enumerate}
The matings exhibited in \cite{BP} were in the family of 
$(2:2)$ holomorphic correspondences $\F_a:z\to w$ defined by the polynomial relation obtained from the following equation by multiplying through by denominators:
\[
\left(\frac{aw-1}{w-1}\right)^2+\left(\frac{aw-1}{w-1}\right)\left(\frac{az+1}{z+1}\right)
+\left(\frac{az+1}{z+1}\right)^2=3 \tag{1.1}
\]
See Section \ref{corr} below for an explanation of why a mating between a quadratic rational map and $PSL(2,\Z)$ has to be in this family, up to conformal conjugacy. 
The {\it limit set} $\Lambda(\F_a)=\Lambda_-(\F_a)\cup\Lambda_+(\F_a)$ of the correspondence $\F_a$ is defined for all $a$ in the {\it Klein combination locus}
${\mathcal K}$ (see Section 2 or \cite{BL1} for the definition of $\mathcal K$).
The {\it connectedness locus} $\mathcal{C}_{\Gamma}$ for the family $\F_a$ is the 
set of values of the parameter $a\in{\mathcal K}$ for which the limit set $\Lambda(\F_a)$ is connected. 
Denoting by
$\overline{\D}(4,3)$ the closed disc with centre $a=4$ and radius $3$, the intersection
$\m:={\mathcal C}_\Gamma\cap \overline{\D}(4,3)$ is called the \textit{modular Mandelbrot set}.
In \cite{BP} it was 
conjectured that for every $a\in \m$ 
the correspondence $\F_a$ is a mating 
between $PSL(2,\Z)$ and some $Q_c$ with $c\in \mathcal{M}$, the classical Mandelbrot set, and that $\m$ is homeomorphic to  $\mathcal M$. 
A computer plot of $\m$ is displayed in Figure \ref{mg}. 

Considerable progress was made on the first conjecture using `pinching' techniques (see  \cite{BH}),
but it was only fully resolved (in \cite{BL1}) with the application of the theory of `parabolic-like mappings'  \cite{L1}, developed 
by the second author of the present paper. Her key observation was that
the family $\F_a$ presents a persistent \textit{parabolic} fixed point $P_a$ of multiplier $1$ at the intersection of $\Lambda_{-}$ and $\Lambda_{+}$, which indicates that a family of maps with a persistent parabolic fixed point may be a better model than quadratic polynomials, that family being $Per_1(1)$, that is to say maps which are conjugate to some 
$P_A(z)=z+1/z+A,\,\,\, A \in \C$. These maps are the parabolic analogues of quadratic polynomials (for which there is a persistent super-attracting fixed point). The parabolic Mandelbrot set $\mathcal{M}_1$ (Figure \ref{mg}, centre) is the connectedness locus of this family: the set of 
$B=1-A^2\in \C$ such that the complement $K_A$ of the parabolic basin of attraction of infinity is connected. The parabolic Mandelbrot set $\mathcal M_1$ was recently proved to be homeomorphic to $\mathcal{M}$ (\cite{PR}).

 \begin{figure}

\begin{center}
 \begin{tabular}{rcl}



\begin{minipage}{3.5cm}\centering
\includegraphics[width= 3.2cm]{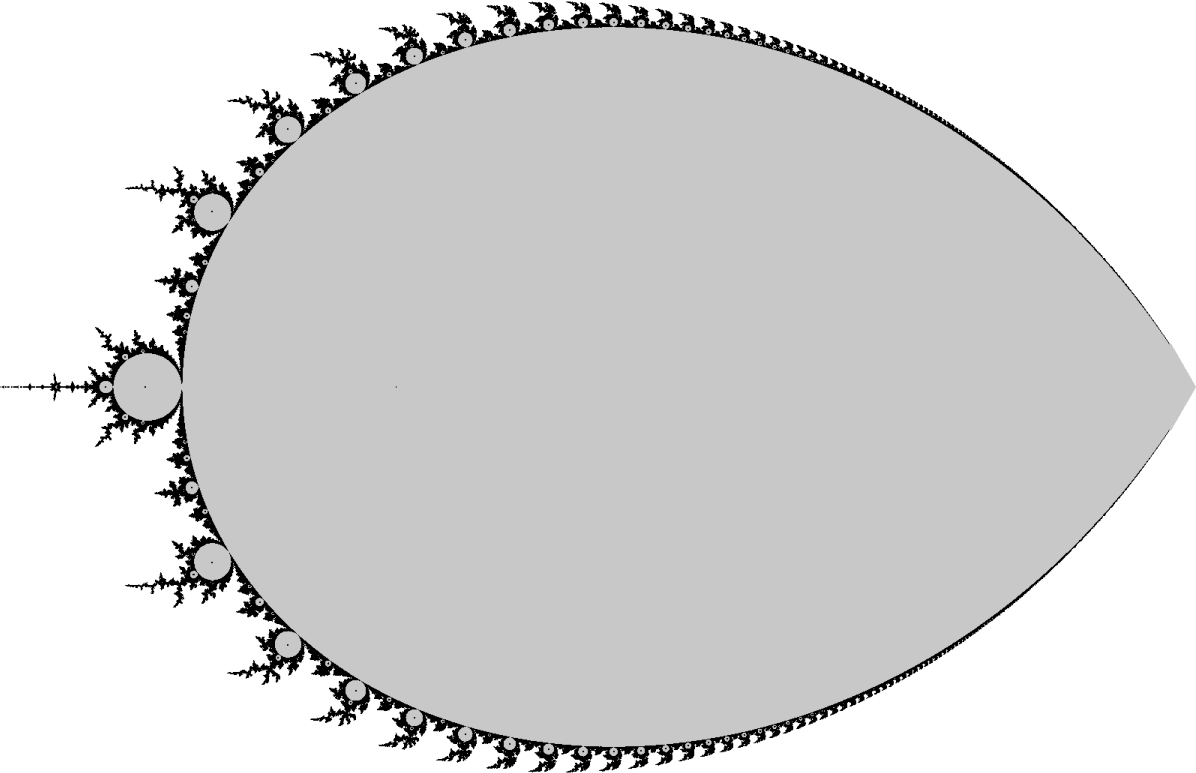}
 \end{minipage}
  \hspace{0.25cm}
\begin{minipage}{3.5cm}
\centering
\includegraphics[width= 2.9cm]{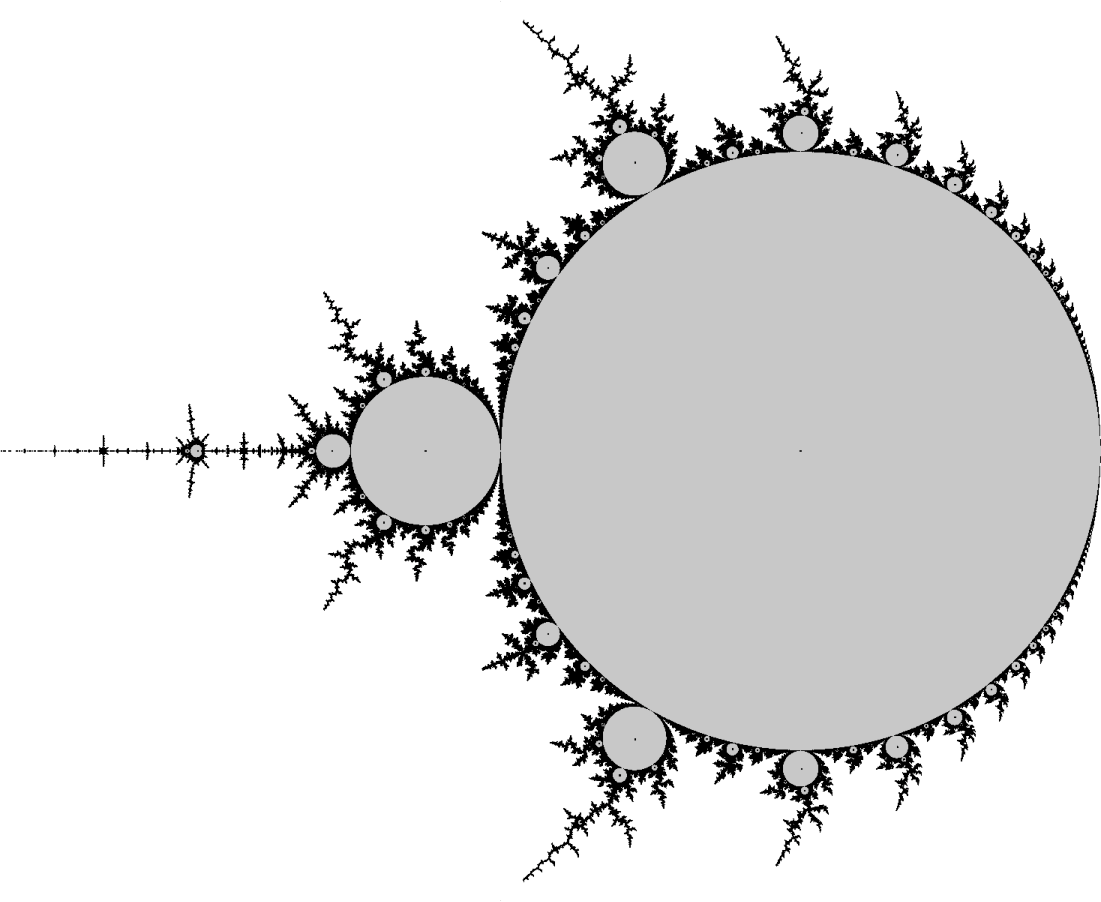}
 \end{minipage}
  \hspace{0.25cm}

  \begin{minipage}{3.5cm}
\centering
\includegraphics[width= 3.2cm]{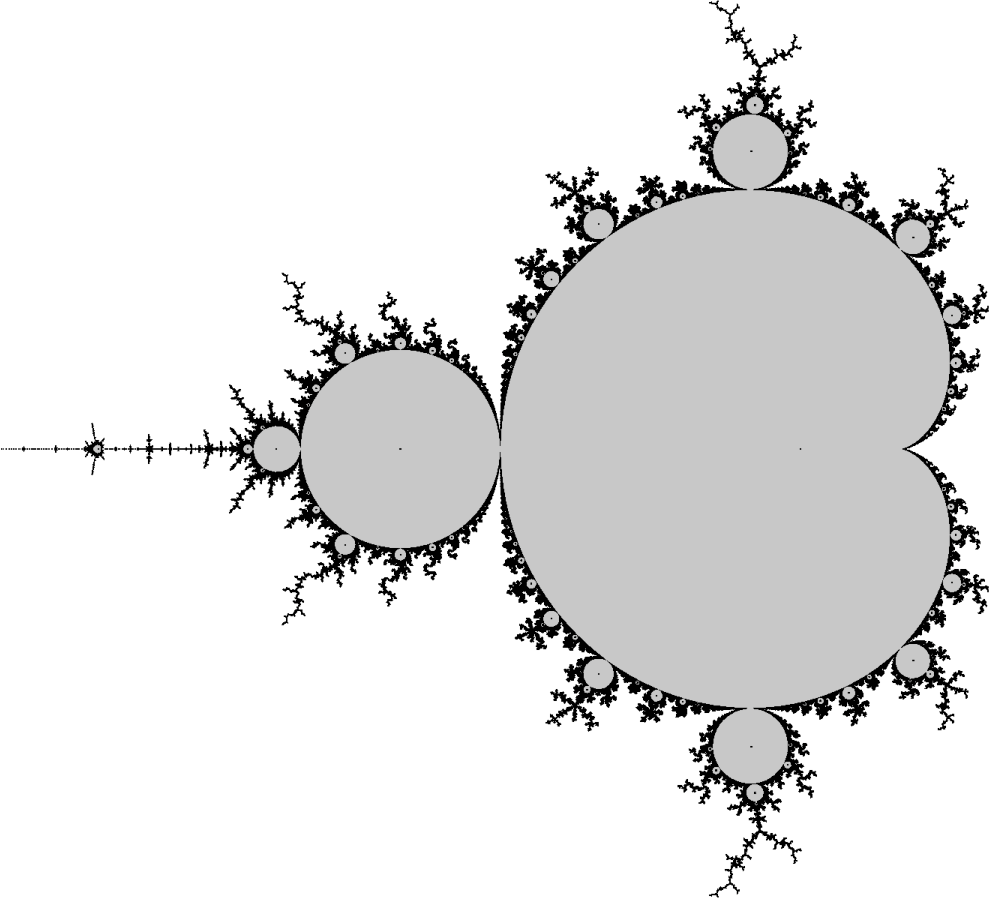}

 \end{minipage}

\end{tabular}
\end{center}
\caption{\small On the left, the modular Mandelbrot set $\m$, which we prove in this paper to be 
homeomorphic to the 
parabolic Mandelbrot set $\mathcal{M}_1$ (in the centre). On the right is the classical Mandelbrot set $\mathcal M$. 
Dynamical space plots of $\F_a$, for $a$ at the centres of three components of $\m$, are displayed in Figure \ref{per123}, while dynamical plots of the elements of $Per_1(1)$ at the centres of the corresponding components of $\mathcal{M}_1$ are displayed in Figure \ref{perplots}. The homeomorphism $\chi:\m \to \mathcal{M}_1$ sends centres to centres. }
\label{mg}
\end{figure} 


In the present paper we resolve the second conjecture proposed in \cite{BP}:

\begin{mteor}\label{main_thm}
The modular Mandelbrot set $\m$ is homeomorphic to the parabolic Mandelbrot set $\mathcal{M}_1$, via a homeomorphism $\chi$ which has the properties: 

(i) For every $a\in \m$, the correspondence $\F_a$ is a mating between the rational map $P_{\chi(a)}$ and the modular group $PSL(2,\Z)$; 

(ii) $\chi$ is a conformal homeomorphism between the interior $\mathring{\m}$ of $\m$ and the interior $\mathring{M}_1$ of $\mathcal{M}_1$;

(iii) when restricted to the complement of a closed neighbourhood (in the induced topology) $N\subset\m$ of the root point $a=7$ of $\m$, the homeomorphism $\chi$
extends to a homeomorphism between an open set containing $\m\setminus N$ in the $a$-plane and an open set containing $\mathcal{M}_1\setminus\chi(N)$
in $Per_1(1)$.

\end{mteor}

Observe that (iii) tells us that the limbs and sublimbs of $\m$ are laid out in the $a$-plane in combinatorially the same arrangement as the corresponding limbs and sublimbs of $\mathcal{M}_1$. Another consequence of the Main Theorem is:

\begin{mcor}\label{whole}
The modular Mandelbrot set $\m$ is the whole of the connectedness locus $\mathcal{C}_{\Gamma}$. \end{mcor}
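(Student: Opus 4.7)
The plan is to combine the Main Theorem with the main result of \cite{BL1}, which guarantees that every $a\in\mathcal{C}_\Gamma$ gives a mating of $\F_a$ with some $P_A$. This defines a map $\Psi:\mathcal{C}_\Gamma\to Per_1(1)$ by $a\mapsto A$; since a mating transports the hypothesis that $\Lambda(\F_a)$ is connected to the conclusion that $K_A$ is connected (via the hybrid equivalence on $\Lambda_-$ and the conformal conjugacy on $\Omega$), $\Psi$ in fact takes values in $\mathcal{M}_1$. By part (i) of the Main Theorem, $\Psi|_\m=\chi$, and the Main Theorem as a whole asserts that this restriction is a bijection onto $\mathcal{M}_1$. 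In particular $\Psi$ is already surjective.

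Given this, the corollary reduces to showing that $\Psi$ is injective on $\mathcal{C}_\Gamma$. Indeed, once injectivity is known, any $a\in\mathcal{C}_\Gamma$ coincides with the unique element $\chi^{-1}(\Psi(a))\in\m$, giving $\mathcal{C}_\Gamma\subseteq\m$; the reverse inclusion is the definition of $\m$. To prove injectivity, suppose $\Psi(a_1)=\Psi(a_2)=A$. Then both $\F_{a_1}$ and $\F_{a_2}$ are matings of $P_A$ with $\Gamma$: each is conformally conjugate on $\Omega$ to the pair $\{\alpha,\beta\}$ on $\H$, and each is hybrid equivalent on $\Lambda_-$ to $P_A$ on $K_A$. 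Glueing these two pieces of data across the persistent parabolic pinch point $\Lambda_-\cap\Lambda_+$, and then straightening the resulting almost-complex structure via the measurable Riemann mapping theorem, should produce a global conformal conjugacy between $\F_{a_1}$ and $\F_{a_2}$.

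The remaining ingredient is the rigidity of the family itself: two distinct parameters in the Klein combination locus yield non-conformally-conjugate correspondences. This follows from the explicit normal form~(1.1), by comparing the action on the persistent parabolic fixed point and on the critical values; the parameter $a$ is a genuine modulus on $\mathcal{K}$. Hence $a_1=a_2$, $\Psi$ is a bijection $\mathcal{C}_\Gamma\to\mathcal{M}_1$ whose restriction to $\m$ is already bijective, and $\mathcal{C}_\Gamma=\m$.

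The step I expect to be the main obstacle is the quasiconformal surgery producing the conformal conjugacy between two matings with the same $P_A$. Because $P_A$ carries a persistent parabolic fixed point one cannot invoke off-the-shelf polynomial-like straightening but must instead work in the parabolic-like framework of \cite{L1}, and the control of the Beltrami coefficient across the pinch point, where $\Omega$, $\Lambda_-$ and $\Lambda_+$ meet, is the delicate part. Once this uniqueness is in hand, the remainder of the deduction is a formal consequence of the Main Theorem.
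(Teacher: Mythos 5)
Your route is essentially the paper's: the paper also takes $a\in\mathcal{C}_\Gamma\setminus\m$, uses the Main Theorem of \cite{BL1} to mate $\F_a$ with some $P_A$ (connectedness of $\Lambda_a$ forcing $B=1-A^2\in\mathcal{M}_1$), uses surjectivity of $\chi:\m\to\mathcal{M}_1$ to produce $a'\in\m$ with $\chi(a')=B$, and concludes $a=a'$ from a conformal conjugacy between $\F_a$ and $\F_{a'}$ together with the rigidity statement you call ``$a$ is a modulus'' (Lemma \ref{unique}, via cross-ratios of the characteristic points). The one step you leave as a sketch --- that two correspondences with connected limit sets mated with the same $P_A$ are conformally conjugate --- does not require any new surgery: it is exactly Lemma \ref{doublypinchedconjugacy} combined with the proof of Proposition \ref{inj}, both of which apply verbatim here since they only use the hybrid equivalence to $P_A$ and the Riemann-map conjugacy of the regular set to the $PSL(2,\Z)$-action on $\H$. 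Where your sketch of that step goes slightly astray is in the diagnosis of the difficulty: one does not straighten an almost-complex structure with the Measurable Riemann Mapping Theorem, because the glued map (hybrid conjugacy on $\Lambda$, $R_{a'}^{-1}\circ R_{a}$ on $\Omega$) is already conformal off the limit set and $\bar\partial$-trivial on it; the genuine issue is proving that this glued map is \emph{continuous} across $\partial\Lambda$, i.e.\ that the boundary values of the two conjugacies match. The paper establishes this by showing the images under the regular-set conjugacy of the vertical leaves $L_x$ land at the correct pre-fixed points for rational $x$ and then using monotonicity, after which Rickman's lemma (not the MRMT) upgrades the glued map to a global conformal conjugacy. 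So your proposal is correct in outline, but the ``control of the Beltrami coefficient across the pinch point'' you anticipate is not the obstacle; the landing/continuity argument plus a removability lemma is.
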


Thus the conjectures of \cite{BP} have all been answered positively, once they have been translated into what is now 
clearly the right setting by substituting
`parabolic quadratic rational maps' for `quadratic polynomials', and `$\mathcal{M}_1$'  for `$\mathcal M$'. 
Moreover, following the recent proof by Petersen and Roesch \cite{PR} of Milnor's conjecture that $\mathcal{M}_1$ is homeomorphic to the classical Mandelbrot set $\mathcal M$,
our Main Theorem proves that $\m$ is homeomorphic to $\mathcal{M}$, as originally conjectured in \cite{BP}.

There is as yet no complete combinatorial description of the classical Mandelbrot set $\mathcal M$, as such a description has to await the resolution of 
MLC, the celebrated conjecture that $\mathcal M$ is locally connected. The fact that  
$\mathcal{M}$ and $\m$ have now been shown to be homeomorphic offers the 
possibility of exploring $\mathcal M$ through another model, one which for example has a different set of Yoccoz inequalities, 
yet to be fully worked out, governing the sizes of its limbs
and sub-limbs (see \cite{BL2}).\\

\noindent{\textbf{The anti-holomorphic case.} 
In independent work, Lee, Lyubich, Makarov and Mukherjee investigated anti-holomorphic matings in a recent article \cite{LLMM}. They established the existence of a homeomorphism 
between a combinatorial model of the connectedness locus of a certain family of anti-holomorphic maps (matings, in the sense of \cite{LLMM}), and a combinatorial model of the Tricorn, the anti-holomorphic analogue of the Mandelbrot Set. Their proof introduces an alternative `straightening’ strategy, different from the approach in \cite{L1} and the present paper.}\\

\noindent{\textbf{Strategy of proof and layout of paper.}}
In Sections \ref{corr} and \ref{Per} we summarise the background facts needed concerning the family $\F_a$ and the family $Per_1(1)$. In Section \ref{surgery} we develop a surgery construction which converts $\F_a$ into a quadratic rational map in the family
$P_A$, without the intermediate parabolic-like stage of \cite{BL1}, and moreover does so uniformly with the parameter. 
The basic idea is to replace the complement 
of the backwards limit set $\Lambda_-(\F_a)$ in $\widehat \C$ by a copy of the basin $\A_A(\infty)$ of the parabolic fixed point $\infty$ of a rational map of the form
$P_A$, so that $\Lambda_-(\F_a)$ becomes the filled Julia set of such a $P_A$. As a model for the parabolic basin we take the Blaschke product 
$$h(z)=\frac{z^ 2+1/3}{z^2/3+1}\ \  \rm{on}\ \ \widehat \C\setminus \overline \D.$$
We borrow a stratagem from \cite{L1} to overcome the difficulty of gluing a smooth set outside a set with cusps, and so realising the replacement: for every $\F_a$ we construct forward invariant arcs $\gamma_a$ emanating from the parabolic fixed point, which move holomorphically with $a$, and which in a neighbourhood of $\Lambda_{a,-}$ separate the expanding dynamics of $\F_a$ from the parabolic dynamics. We choose $a_0 \in \mathring \m$ and glue the dynamics of $h$ outside a set $O_{a_0}$ containing the limit set $\Lambda_{a_0}$ for $\F_{a_0}$ and having $\gamma_{a_0}$ on the boundary, and to obtain uniformity with respect to the parameter we ensure the boundaries of $O_a$ move holomorphically with respect to $a$.
To start the construction of $O_a$, we first choose 
a (pinched) neighbourhood of $\m\setminus\{7\}$ in parameter space in which to work. This is a lune $\L\f$, the open set bounded by two arcs of circles intersecting with 
angle $2\theta$ between them at the points $a=1$ and $a=7$ (the root point of $\m$). The existence of such a neighbourhood for some value of $\theta$ in the half-open interval 
$[\pi/3,\pi/2)$ was proved in \cite{BL2} as an application of a new Yoocoz inequality derived there. In the Appendix to the present paper we show that for every 
$a\in\L\f$ there exists a {\it dynamical space lune} $V_a$ which moves holomorphically with $a$ 
and contains $\Lambda (\F_a)$. To guarantee the holomorphic motion of the arcs $\gamma_a$ we restrict our attention to a doubly truncated lune 
$\mathring K \subset \L\f$ (Section \ref{dtl}). The truncation at the end $a=7$ of the lune $\L\f$ is by the removal of an arbitrarily small disc 
neighbourhood $N$ of the root point, but rather than overload the notation $K$ with subscripts or superscripts we invite the reader to keep in mind
that our holomorphic motions parametrised by $\mathring K$, and hence our subsequent surgery construction for $\F_a,\  a\in \mathring K$ and definition of
$\chi:\K \to Per_1(1)$, are all a priori dependent on $N$.

In Section \ref{parlune} we define the {\it central set} $O_a$ to be the connected component of $\F_a^{-1}(\overline{V}_a)\setminus \gamma_a$ containing the limit set 
$\Lambda_{a,-}$, and in Section \ref{holom} we construct a holomorphic motion of the complement $\widehat \C \setminus O_a$ of this set.
In Section \ref{exmap} we choose a base point $\,a_0 \in \mathring K\cap\m$ for our holomorphic motion 
and construct an `external map' $g$ for $\F_{a_0}$, by conjugating the dynamics of $\F_{a_0}$ by the uniformization map $\alpha$ from the complement 
of the limit set $\Lambda_{a_0,-}$ to the complement of the closed unit disc, and we define the sets $O_g$ and $\gamma_g$ for $g$ by taking the 
image under $\alpha$ of the respective sets for $\F_{a_0}$. 
Using Fatou coordinates (see Lemma \ref{arcs}) we construct an equivariant quasi-symmetric map between $\gamma_g$ and the
corresponding forward invariant arc $\gamma_h$ for $h$: extending this we define a quasiconformal map $f$ between the complement of $O_g$ and 
the complement of the corresponding set $O_h$ for $h$. Then, precomposing $f$ by $\alpha$, we glue the dynamics of $h$ on the complement of $O_h$ 
to the dynamics of $F_{a_0}$ on $O_{a_0}$.
Composing $f$ with our holomorphic motion of the complement of $O_a$, we obtain the surgery construction for the whole family $\F_a$ (Section \ref{wf}).

In Section \ref{chi} we define the map  $\chi:\m \to \mathcal{M}_1$, using the fact that when $a\in \m$ the surgery yields a unique  
$P_A\in \mathcal{M}_1$ hybrid equivalent to $\F_a$, and that by Proposition \ref{welldefined} this $P_A$ does not depend  on the choice of dividing curves 
or the various choices of quasiconformal homeomorphism made during the surgery construction.
We then prove that $\chi:\m \to \mathcal{M}_1$ is injective (Proposition \ref{inj}).

While $\chi$ is also well-defined on $\K\setminus \m$, its definition there depends on choices made during the surgery, and it is by no means obvious a priori that $\chi$ is injective on the whole of $\K$, nor that it is continuous there.
But in Section \ref{extension} we show that on $\K\setminus \m$ the map $\chi$ can be interpreted as the same map (Proposition \ref{same}) as that obtained from
the position of the critical value  of $\F_a|_{V_a}$ in analogy to Douady and Hubbard's map in their analysis of families of polynomial-like maps \cite{DH}. 
By applying Lyubich's formulation (Chapter 6 of \cite{Lyu}) of the methods introduced by Douady and Hubbard, we prove that with suitable choices made in the initial surgery
at $a=a_0$ the extension of $\chi$ from $\m$ to $\K\setminus\m$ is locally quasiregular (Proposition \ref{extnqr}), that
$\chi$ is continuous on the boundary of $\m\cap \K$ (Proposition \ref{conti}), and that
$\chi$ is holomorphic on the interior of $\m\cap \K$ (Propositions \ref{hypercom} and \ref{queer}).
Finally we deduce  
that $\chi$ maps 
$\K$ homeomorphically onto its image in $Per_1(1)$ (Proposition \ref{homeo_on_lune}).

Since the surgery preserves limit sets, and hence their connectedness or otherwise, $\chi(\m \setminus\{7\})$ is contained in $\mathcal{M}_1\setminus\{1\}$. 
However it is not obvious that $\mathcal{M}_1\setminus\{1\} \subset \chi(\m \setminus\{7\})$. 
If the root $a=7$ of $\m$ were contained in the domain $\K$ of our holomorphic motion, we could apply $\chi$ to a loop encircling $\m$ and
it would follow that $\mathcal{M}_1 \subset \chi(\m)$, from the known connectedness of $\mathcal{M}_1$. 
But $\K$ does not contain the root of $\m$. We get around this problem by
decomposing $\m$ into its main hyperbolic component and limbs $L_{p/q}$ (Proposition \ref{limb_properties}), and encircling each limb 
with a loop starting and ending at the root of the limb (see Lemma \ref{enc}). In Section \ref{final}
we complete the proof of the Main Theorem 
by proving (Corollary \ref{chi_on_roots}) that $\chi: \m\setminus\{7\} \to \mathcal{M}_1\setminus\{1\}$ extends continuously to the respective root points 
$a=7$ and $B=1$.
In Section \ref{proofcor} we deduce the Corollary that $\m$ is the whole of the connectedness locus of the family $\F_a$. \\

\textbf{Acknowledgments.}
This research has been partially supported by the 
Funda\c{c}\~ao de amparo a pesquisa do estado de S\~ao Paulo (Fapesp, processes 
2016/50431-6, 2017/03283-4), the Cnpq (406575/2016-19), the prize L'ORÉAL-UNESCO-ABC Para Mulheres na Ciência
and the Serrapilheira Institute (grant number Serra-1811-26166).

\section{The families $\F_a$ and $Per_1(1)$}
\subsection{The family $\F_a$}\label{corr}
A holomorphic correspondence on $\widehat \C$ is a multivalued map defined by a polynomial relation $P(z,w)=0$. The correspondence is $(n:m)$ when the polynomial $P(z,w)$ 
which defines it has degree $n$ in $z$ and $m$ in $w$, meaning that each $z$ has $m$ corresponding (images) $w$ and each $w$ has $n$ corresponding (pre-images) $z$.
In particular, a holomorphic $(2:2)$ correspondences on  $\widehat \C$ in a $2$-valued map (with a $2$-valued inverse map)
$$\F:z \rightarrow w$$
defined implicitly by an equation $P(z,w)=0$ where $P$ has the form
$$P(z,w)= (az^2+bz+c)w^2+(dz^2+ez+f)w+ gz^2+hz+j.$$
The family of holomorphic $(2:2)$ correspondences introduced in \cite{BP}  are the implicit functions $\F_a: z\to w$ defined by
the equation (1.1) (Section \ref{intro}), repeated here for the convenience of the reader:
$$\left(\frac{aw-1}{w-1}\right)^2+\left(\frac{aw-1}{w-1}\right)\left(\frac{az+1}{z+1}\right)+\left(\frac{az+1}{z+1}\right)^2=3,$$
where $a\in \C$.
In the current paper such a correspondence will usually either be expressed in terms of the coordinate $z$, as in equation (1.1), 
or in terms of the  coordinate
$$Z=\frac{az+1}{z+1},$$
in which the equation of the correspondence becomes
$$J_a(W)^2+Z.J_a(W)+Z^2=3,$$
where $J_a$ is the unique conformal involution which has fixed points at $Z=1$ and $Z=a$. In the coordinate $z$, the conformal involution
 has fixed points $z=0$ and $z=\infty$, and there it is $J_a(z)\equiv J(z)=-z$. To simplify computations, we will also use the coordinates 
$\zeta=Z-1$ in Proposition \ref{divarcs}, and the coordinates
$z'=(a-1)z$ in Propositions \ref{truncatelune} and  \ref{divarcs}, and for Lemma \ref{hollunes}.

When written in terms of the coordinate $Z$,
it is apparent that $\F_a:Z \to W$ can also be written in the form
$$W=J_a\circ Cov_0^Q(Z),$$
where $Cov_0^Q$ is the {\it deleted covering correspondence} of the cubic polynomial $Q(Z)=Z^3-3Z$, that is to say 
(see \cite{BL1}) $Cov_0^Q$ is the correspondence 
$$Z\to W \ \Leftrightarrow\ \frac{Q(W)-Q(z)}{W-Q}=0.$$
Why investigate this particular family of correspondences? 
The reason is that every $(2:2)$ holomorphic correspondence $\F$ with the property that 
$\widehat \C$ is
partitioned into completely invariant $\Omega$ and $\Lambda$ as above, with a conjugacy between $\F$ on $\Omega$ and 
$PSL(2,\Z)$ on $\H$, is conformally conjugate to some $\F_a$. To see why this is so,
observe that the $(2:2)$ correspondence defined by $\alpha:z\to z+1$ and $\beta:z \to z/(z+1)$ on $\H$ satisfies the `diagram condition' below,
since $\alpha\circ\beta^{-1}\circ\alpha=\beta\circ\alpha^{-1}\circ\beta$.

\begin{picture}(200,100)

\put(150,20){$z_3$}
\put(150,44){$z_2$}
\put(150,68){$z_1$}
\put(184,20){$w_3$}
\put(184,44){$w_2$}
\put(184,68){$w_1$}
\put(162,22){\vector(1,1){20}}
\put(162,26){\vector(1,2){20}}
\put(162,42){\vector(1,-1){20}}
\put(162,50){\vector(1,1){20}}
\put(162,66){\vector(1,-2){20}}
\put(162,70){\vector(1,-1){20}}
\label{mot}
\end{picture}\\
This diagram condition says that the two `zig-zag' orbits (forwards, then backwards, then forwards) starting at any initial $z_i$ 
arrive at the same destination $w_i$. An alternative description is given by regarding an `arrow' in the diagram as a point of the 
curve (surface)  $P(z,w)=0$, for example the arrow $z_1\to w_2$ 
corresponds to the point $(z_1,w_2)$. The curve, the graph of the correspondence, comes equipped with covering involutions, 
$I_+$ and $I_-$:
$$I_+(z,w)=(z,w') \mbox{\ \ where \,\,\,} P(z,w')=0,$$
$$I_-(z,w)=(z',w) \mbox{\.\ where \,\,\,} P(z',w)=0,$$
The diagram condition above is equivalent to asking that 
$$( I_- \circ I_+)^3=( I_- \circ I_+)\circ( I_- \circ I_+)\circ( I_- \circ I_+) =Id.$$
For this condition to hold on $\Omega$, it must (by analytic continuation) hold on the whole of $\widehat \C$.
This implies that our correspondence factorises as a deleted covering correspondence $Cov_0^Q$ of a degree $3$ rational map $Q$, 
(the rational map identifying together the points $z_1, z_2$ and $z_3$ in the diagram condition)
followed by a Mobius transformation $M$ (the zig-zag map sending each $z_i$ to the corresponding $w_i$). Thus
$$\F=M \circ Cov_0^Q.$$
Moreover on $\Omega$ the M\"obius transformation $M$ restricts to an involution (since $\alpha\circ\beta^{-1}\circ\alpha$ is an 
involution on $\H$), so, again by analytic continuation, $M$ must  be an involution (which we shall denote by $J$ in this paper) on 
$\widehat \C$, and thus
$$\F=J \circ Cov_0^Q.$$
Every degree $3$ rational map has $4$ critical points (counted with multiplicity). Our cubic $Q$ must have a double critical point (this corresponds to the point of ${\H}$ where $z \to z+1$ and $z \to z/(z+1)$ coincide), and two single critical points (else the correspondence 
would factorise into a pair of M\"obius transformations), and as we can post-compose $Q$ by any M\"obius transformation without altering
$Cov_0^Q$, we can normalise $Q$ to the specific polynomial 
$$Q(Z)=Z^3-3Z$$
which is what it will be for the rest of this paper.
Finally, we note that one consequence of the conditions we have imposed on the restriction of $\F$ to $\Lambda_+$ and 
$\Lambda_-$ is that $\Lambda_-\cap \Lambda_+=\{P\}$ is a fixed point of $\F$ (that is, $P\in \F(P)$), and as $P$ is also a fixed point
of $J$ (since $J$ is the zig-zag map, and it is easily seen that our conditions imply that the zig-zag map sends $\Lambda_-$ to $\Lambda_+$) we deduce that $P$ must be fixed by $Cov_0^Q$,
and it is therefore one of the critical points $Z=\pm1$ of $Q$. Replacing the $Z$-coordinate by $-Z$ if necessary, we can choose
this critical point to be $Z=+1$. So $J=J_a$ has fixed points $Z=1$ and $Z=a$. 

Having answered the question of why consider the family $\F_a$, our next task is to show that for some $a\in \C$ the Riemann sphere
$\widehat \C$ can indeed be partitioned into a completely invariant open set $\Omega$ and a completely invariant closed set $\Lambda$ with
the properties we would like.
By construction, for certain $a \in \C$, there exists a fundamental domain for $\F_a$, which 
we can obtain by intersecting fundamental domains for 
$Cov_0^Q$ and $J_a$, constructed as follows.
   \begin{figure}[hbt!]
\centering
 \psfrag{a}{\tiny $a$}
 \psfrag{A}{\tiny $\overline \Delta_J^{st}$}
  \psfrag{-2}{\tiny $-2$}
 \psfrag{1}{\tiny $1$}
  \psfrag{2}{\tiny $-2$}
   \psfrag{D}{\tiny $\Delta_{Cov}^{st}$ fundamental domain for $Cov_0^Q$}
 \psfrag{C}{\tiny $Cov^Q_0$}
 \psfrag{L}{}
\includegraphics[width= 3cm]{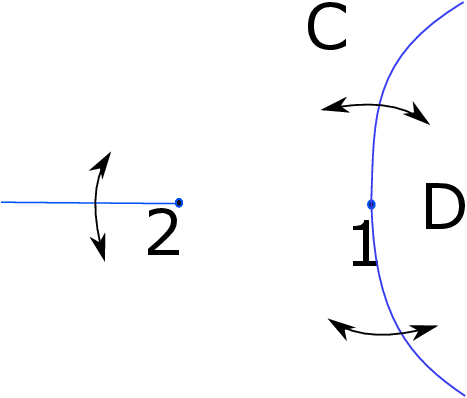}
\caption{\small Construction of standard fundamental domain for $Cov_0^Q$.}
\label{costfd}
  \end{figure}
The point $Z=1$ is a critical point for $Q$ with co-critical point $Z=-2$ (that is $Q(1)=Q(-2)$). The three blue half-lines
in Figure \ref{costfd} are the inverse image $Q^{-1}((-\infty,-2])$ of the half line $(-\infty,-2]$ on the quotient (image) Riemann sphere. Thus
the deleted covering correspondence  
$Cov_0^Q$ of $Q$ sends each of these half-lines to the other two. 
The open set $\Delta_{Cov}^{st}$ bounded by the two half-lines starting at $Z=1$ and
running off to $\infty$ at angles $\pm \pi/3$ with the positive real axis,
and containing the point $Z=2$, is the {\it standard} fundamental domain for $Cov_0^Q$. The complement of the round disc with boundary passing through $Z=1$ and $Z=a$ (illustrated in Figure \ref{fd3}, on the left) is the {\it standard} fundamental domain for $J_a$, which we 
denote $\Delta_J^{st}$.

 \begin{figure}

\begin{center}
 \begin{tabular}{rcl}
 \begin{minipage}{4.5cm}
\centering
 \psfrag{a}{\tiny $a$}
 \psfrag{A}{\tiny $ \Delta_J^{st}$}
  \psfrag{-2}{\tiny $-2$}
 \psfrag{1}{\tiny $1$}
  \psfrag{2}{\tiny $2$}
 \psfrag{C}{\tiny $\Delta_{Cov}^{st}$}
 \psfrag{L}{}
\includegraphics[width= 4cm]{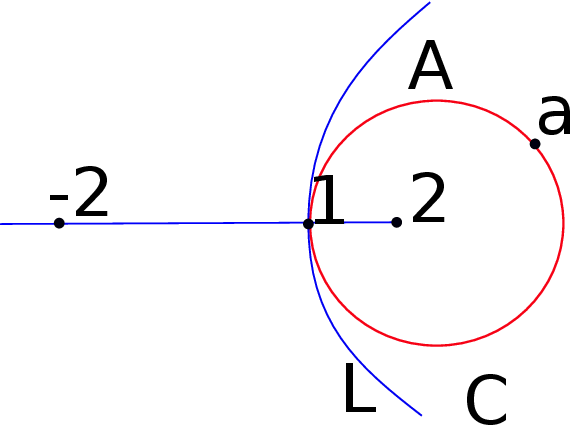}
 \end{minipage}
  \hspace{1.5cm}

  \begin{minipage}{4.5cm}
 \centering
 \psfrag{a}{\tiny $a$}
 \psfrag{A}{\tiny $ \Delta_J^{st}$}
  \psfrag{F}{\tiny $\F_a^{-1}(\Delta_J^{st})$}
 \psfrag{e}{\tiny $\F_a^{-2}(\Delta_J^{st})$}
  \psfrag{2}{\tiny $(2:1)$}
 \psfrag{1}{\tiny $(1:2)$}
  \psfrag{3}{\tiny $(1:1)$}
 \centering
\includegraphics[width= 4cm]{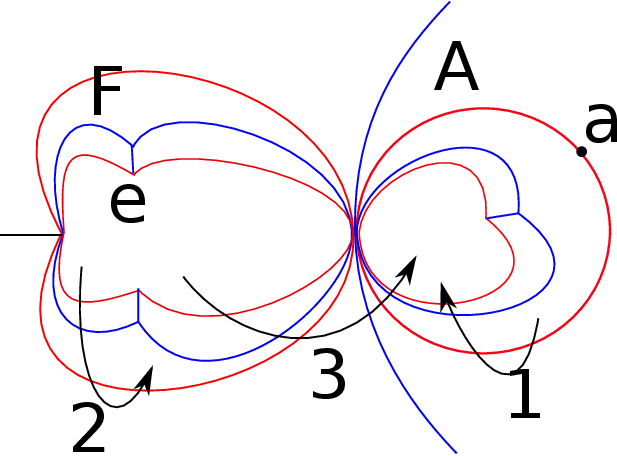}
 \end{minipage}

\end{tabular}
\end{center}
\caption{\small On the left, standard fundamental domains for $Cov_0^Q$ and $J_a$. On the right, images and preimages of fundamental domains.}
\label{fd3}
\end{figure}

 When $a\in {\overline \D(4,3)}\setminus \{1\}$ the complements of $\Delta_{Cov}^{st}$ and $\Delta_J^{st}$ intersect 
 in the single point $Z=1$
 and 
 $$\Delta_{corr}^{st}:= \Delta_J^{st} \cap \Delta_{Cov}^{st}$$ is a fundamental domain for $\F_a$ on the union $\Omega$ of all images 
 of $\Delta_{corr}^{st}$ under (mixed) iteration of $\F_a$ and $\F_a^{-1}$ (the correspondence $\F_a$ is undefined at $a=1$). More generally,
 the \textit{Klein combination locus} $\mathcal{K}$ is the set of parameters $a \in \C$ for which there exist fundamental domains
 $\Delta_J$ and $\Delta_{Cov}$, bounded by Jordan curves and such that $\Delta_J \cup \Delta_{Cov}= \widehat \C \setminus \{1\}$.
 For every $a\in \mathcal{K}$ the set 
$$\Delta_{corr}:= \Delta_J \cap \Delta_{Cov}$$ is a fundamental domain for $\F_a$ acting on the union of all images of $\Delta_{corr}$
(see \cite{BL1}). Note that $\mathcal K$ contains ${\overline \D(4,3)}\setminus\{1\}$.

By construction, for every $a\in \mathcal{K}$ we have the following behaviour: $Z=1$ is a parabolic fixed point for $\F_a$ of multiplier $1$; both images of the complement of $\Delta_J$ belong to the complement of $\Delta_J$, that is: $\F_a((\Delta_J)^c) \subset (\Delta_J)^c$
and the restriction of $\F_a$ to $\Delta_J$ is a ($1:2$) correspondence; while both pre-images of $\Delta_J$ lie inside $\Delta_J$, 
so the restriction of $\F_a$
to the preimage of $\Delta_J$ is a $(2:1)$ map (see Proposition 3.4 in \cite{BL1} and Figure \ref{fd3}, on the right).
The \textit{forward limit set} $\Lambda_{a,+}$ for $\F_a$ is defined as
$$\Lambda_{a,+}=\bigcap_{n=0}^\infty \F_a^n(\hat{\mathbb C}\setminus\Delta_{J}),$$
the \textit{backward limit set} $\Lambda_{a,-}$ for $\F_a$ is defined to be
$$\Lambda_{a,-}=\bigcap_{n=0}^\infty \F_a^{-n}(\overline{\Delta}_{J})=J(\Lambda_{a,+})$$
and the \textit{limit set} $\Lambda_a$ for $\F_a$ 
is defined to be
$\Lambda_a=\Lambda_{a,+}\cup\Lambda_{a,-}$ (by Proposition 3.4 in \cite{BL1} we have $\Lambda_{a,+}\cap \Lambda_{a,-}=\{1\}$).
The regular set $\Omega_a$ is defined to be $\widehat{\mathbb C}\setminus\Lambda_a$: it is tiled by the images of $\Delta_{corr}$.
The limit set of $\F_a$ is shown in grey in Figure \ref{per123} for three different values of $a$: the pictures are plots in the plane of 
the coordinate $z$, the red and blue lines are the boundaries of the standard domains (transferred from the $Z$-coordinate to the 
$z$-coordinate), and their images under mixed iteration of $\F_a$.\\

\begin{figure}
\begin{center}
\centering
\includegraphics[width=3.5cm]{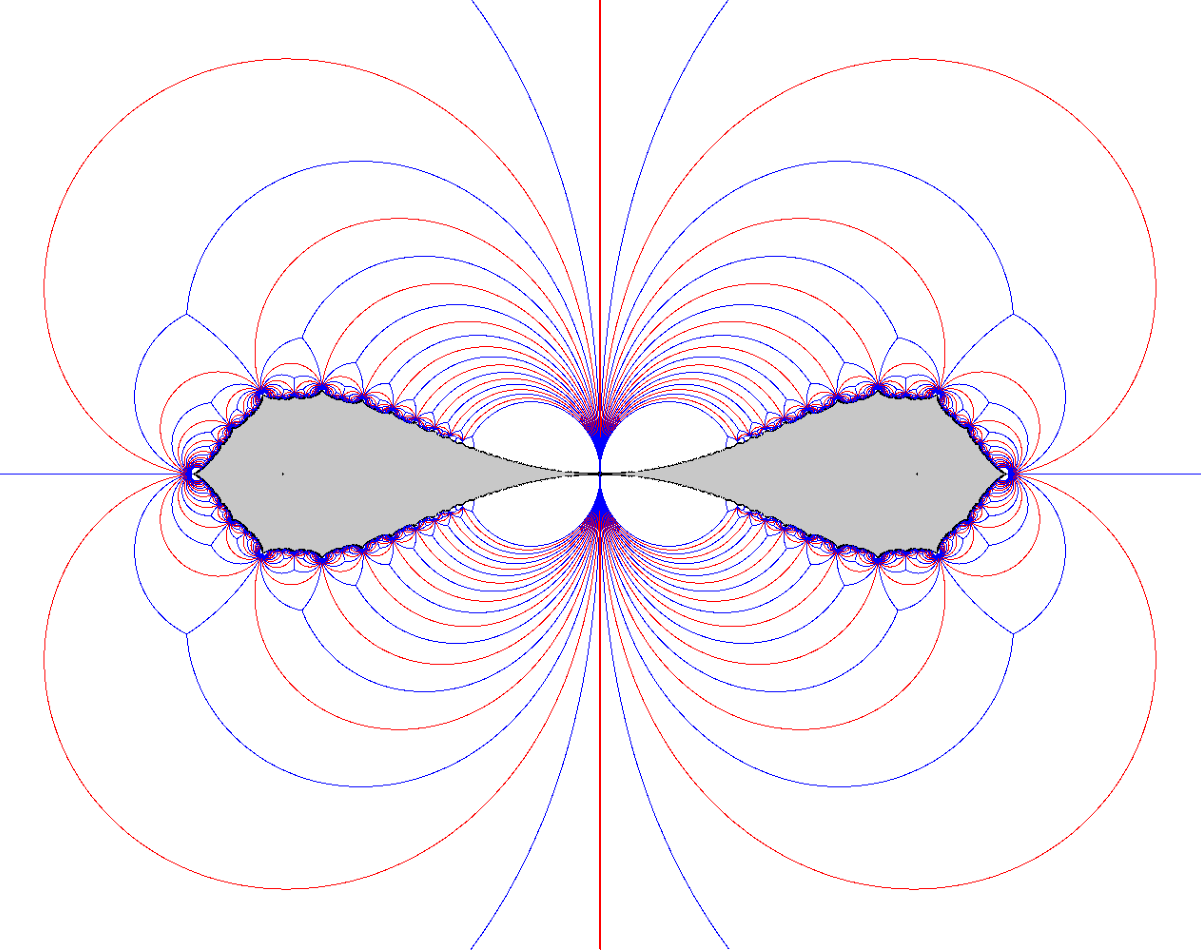}
\hspace{0.5cm}
\centering
\includegraphics[width=3.5cm]{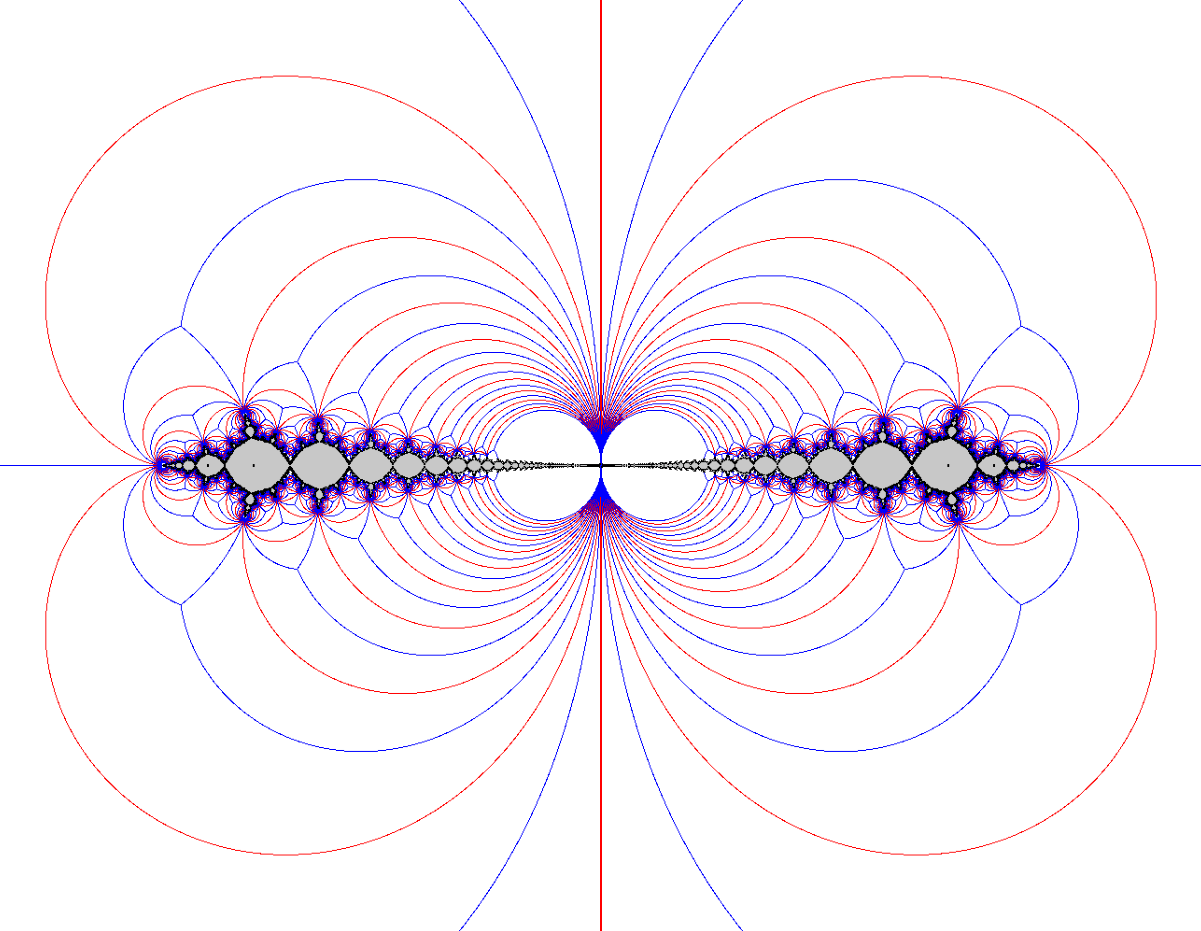}
\hspace{0.5cm}
\centering
\includegraphics[width=3.5cm]{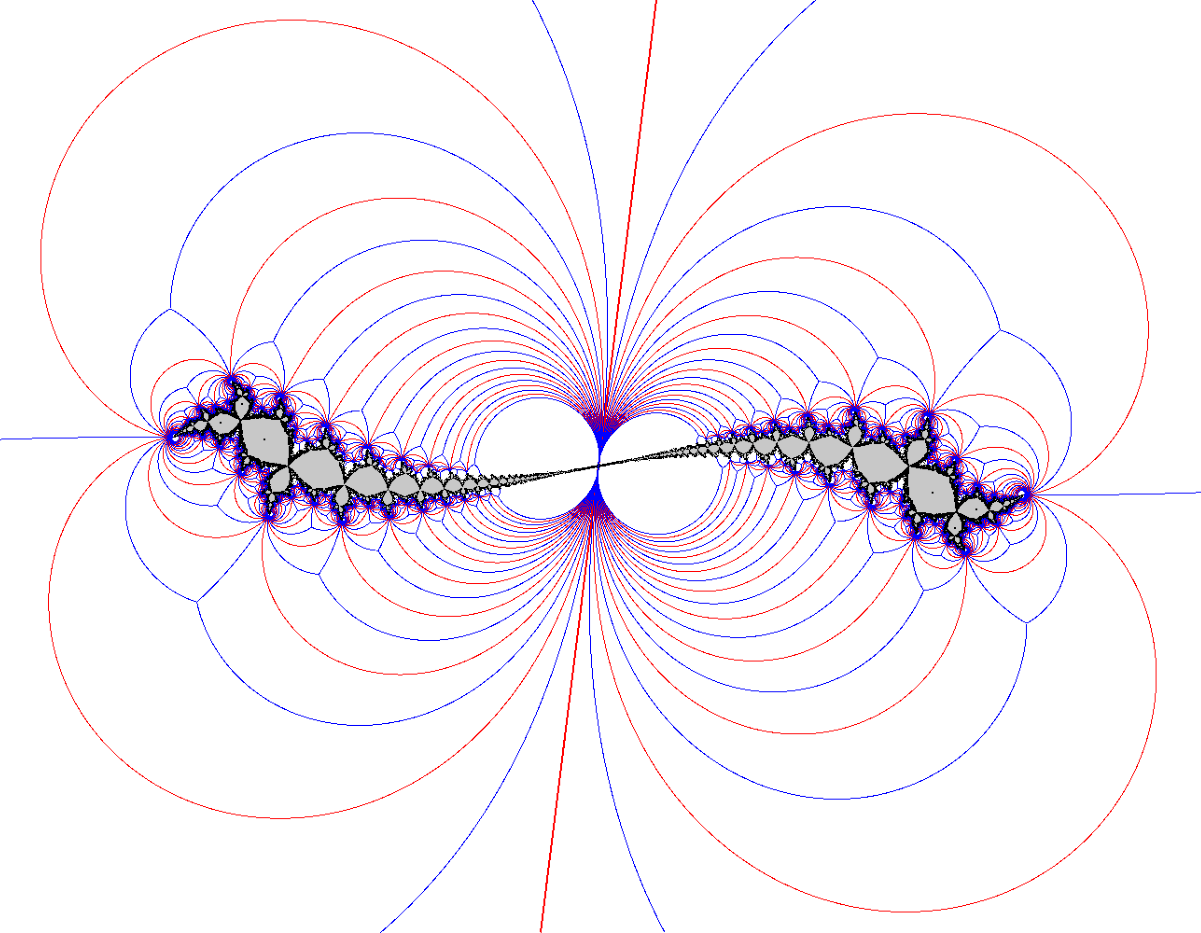}
\end{center}
\caption{\small From left to right, regular and limit sets of correspondences $\F_a$ plotted in the $z$-coordinate plane, for $a$ at the centres of 
the main hyperbolic component of $\mathcal{M}_\Gamma$, the period $2$ component and a period $3$ component.}\label{per123} 
\end{figure}

The correspondence $\F_a$ has certain associated {\it characteristic points}: a conformal conjugacy between 
$\F_a$ and $\F_{a'}$ necessarily sends each of these to the corresponding characteristic point of $\F_{a'}$. The first such
point is the persistent parabolic fixed point $P=P_a$ of $\F_a$, characterised by the fact that it is both a fixed point of $J$ and
a critical point of $Q$: in the $Z$-coordinate it is the point $Z=1$, and in the $z$-coordinate $P$ is $z=0$. The branch of $\F_a$ which fixes
$P$ has derivative $1$ at $P$. Further characteristic points are the other critical points $Z=-1$ and $Z=\infty$ of $Q$, and the other fixed 
point, $Z=a$ of $J_a$: we remark that $Z=-1$ can also be regarded as the critical point $c_a$ of the $(2:1)$ restriction of $\F_a$ to 
$\Lambda_{a,-}$. 
The cross-ratio of any four distinct characteristic points is preserved by every conformal conjugacy between $\F_a$ and $\F_{a'}$. 
Applying this to the four points $Z=1,-1,\infty$ and $a$ we have:

\begin{lemma}\label{unique}
If $\F_a$ is conformally conjugate to $\F_{a'}$ then $a=a'$. $\qed$
\end{lemma}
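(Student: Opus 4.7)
The plan is to leverage the statement immediately preceding the lemma: any conformal conjugacy $\phi$ between $\F_a$ and $\F_{a'}$ must send each characteristic point of $\F_a$ to the characteristic point of $\F_{a'}$ playing the same dynamical role, and in particular it must preserve the cross-ratio of any quadruple of such points. Since $\phi$ is a conformal self-map of $\widehat{\C}$, it is a Möbius transformation, so it is determined by its action on any three points.

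First I would check that the four characteristic points $1, -1, \infty, a$ have mutually inequivalent roles, so that their pairing under $\phi$ is forced. Indeed, $Z=1$ is singled out as the unique persistent parabolic fixed point of $\F_a$; $Z=\infty$ is the unique double critical point of the cubic $Q(Z)=Z^3-3Z$ (a short computation in the coordinate $w=1/Z$ shows local degree three at $\infty$, while $Q'(Z)=3(Z^2-1)$ gives only simple critical points at $Z=\pm 1$); $Z=-1$ is therefore the remaining simple critical point of $Q$; and $Z=a$ is the non-parabolic fixed point of $J_a$. Hence $\phi$ must satisfy $\phi(1)=1$, $\phi(-1)=-1$, $\phi(\infty)=\infty$, and $\phi(a)=a'$.

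Two equivalent finishes are then available. Either one observes that a Möbius map of $\widehat{\C}$ fixing three distinct points must be the identity, whence $\phi=\mathrm{Id}$ and $a=a'$ immediately; or, following the hint in the text, one computes the cross-ratio $(1,-1;\infty,a)=(1+a)/(a-1)$, equates it to $(1+a')/(a'-1)$, and solves the resulting linear equation to obtain $a=a'$. I do not expect a genuine obstacle here: the lemma is essentially an immediate consequence of the unambiguous identification of characteristic points combined with the rigidity of Möbius transformations, with all substantive work having already been done in the preceding paragraphs.
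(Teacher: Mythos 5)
Your proposal is correct and follows essentially the same route as the paper: the paper's entire argument is that a conformal conjugacy must match up the characteristic points $Z=1,-1,\infty,a$ with $Z=1,-1,\infty,a'$ and preserve their cross-ratio, which forces $a=a'$. Your extra details (why the pairing of characteristic points is forced, and the equivalent finish that a Möbius map fixing $1,-1,\infty$ is the identity) are sound and if anything slightly more explicit than the paper, which states the lemma with only the preceding cross-ratio remark as justification.
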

  
\subsection{$Per_1(1)$}\label{Per}
$Per_1(1)$ is Milnor's notation for the space of conformal conjugacy classes of quadratic rational maps having a parabolic fixed point 
with multiplier $1$. Normalising such a map to put the parabolic fixed point at $z=\infty$, and critical points at $\pm1$, $Per_1(1)$ 
is the family 
$$Per_1(1):= \{[P_A] | P_A(z)=z+1/z+A,\,\,\,\,\,\,\, A\in \C\}$$
of conformal conjugacy classes $[P_A]$. Here $P_A\sim P_{A'}$ if and only if $A'=\pm A$ (note that $P_A\sim P_{-A}$, by the involution 
$z \rightarrow -z$, which interchanges the critical points).
For every $A  \in\C$, the parabolic basin of attraction of infinity,  $\mathcal{A}_A(\infty)$, is completely invariant, and we can define the filled Julia set $K_A$ to be its complement, 
that is, 
$$K_A:= \widehat \C \setminus \mathcal{A}_A(\infty).$$
This is the parabolic counterpart of the definition of  filled Julia set for any polynomial $P$ on $\widehat \C$.
Note that the filled Julia set $K_A$ is well defined for every $A \neq 0$. The map $P_0(z)=z+1/z$ is the unique member of $Per_1(1)$ for which the multiplicity of the parabolic fixed point at $\infty$ is equal to $3$, which means that for this map there exist two attracting petals, and both are completely invariant Fatou components: the Julia set of $P_0$ is the imaginary axis, and the two attracting petals are the positive and the negative half planes $\{z\in \C | Re(z)>0\}$ and $\{z\in \C | Re(z)<0\}$ respectively. For consistency with \cite{L1}, we set $K_{0}=\{z\in \C | Re(z)<0\}$.

 \begin{figure}
 
\begin{center}
 \begin{tabular}{rcl}
 \begin{minipage}{3cm}
\centering
\includegraphics[width= 2.5cm]{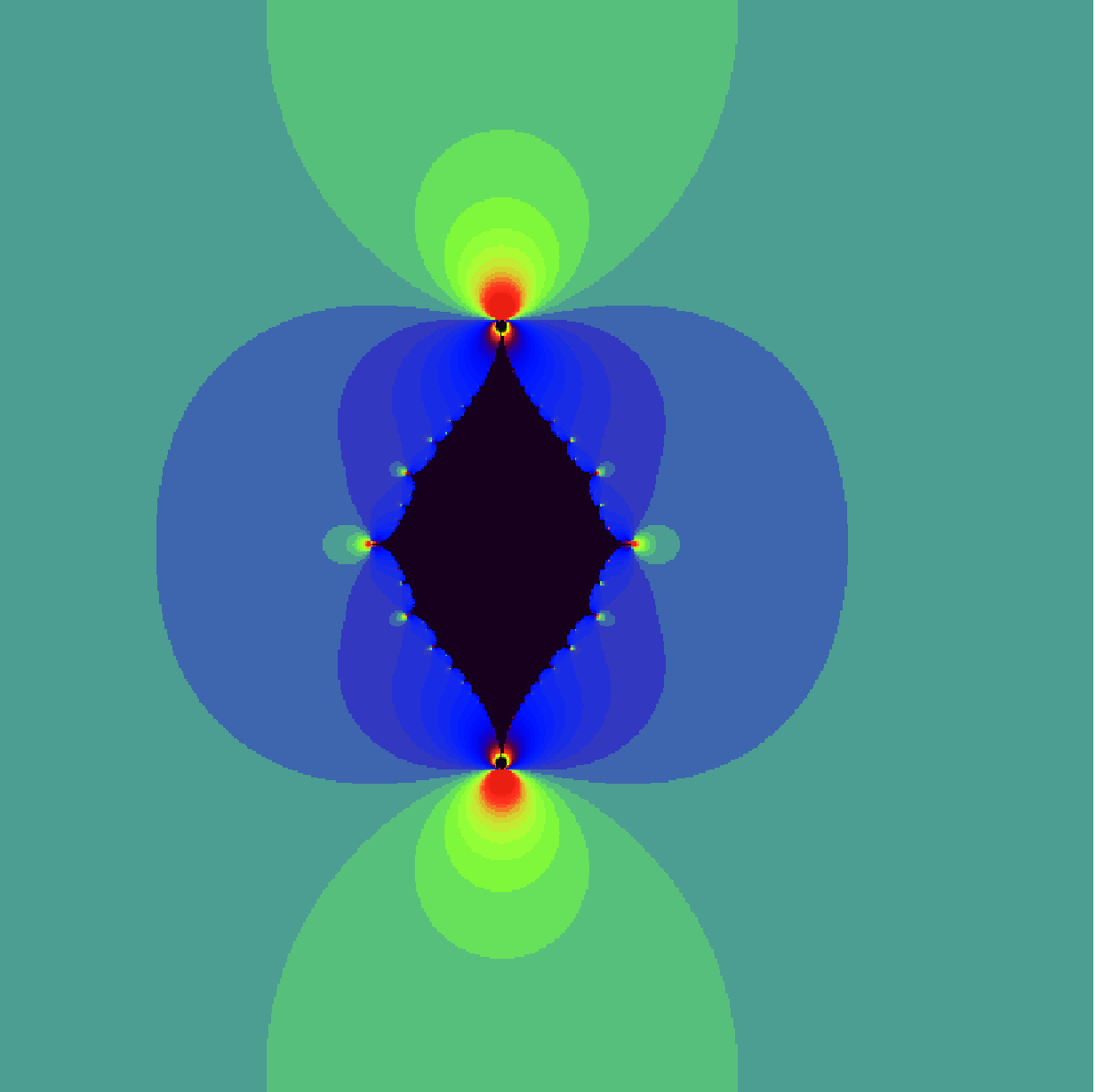}
 \end{minipage}
   \hspace{0.2cm}
  \begin{minipage}{3cm}
\centering
\includegraphics[width= 2.5cm]{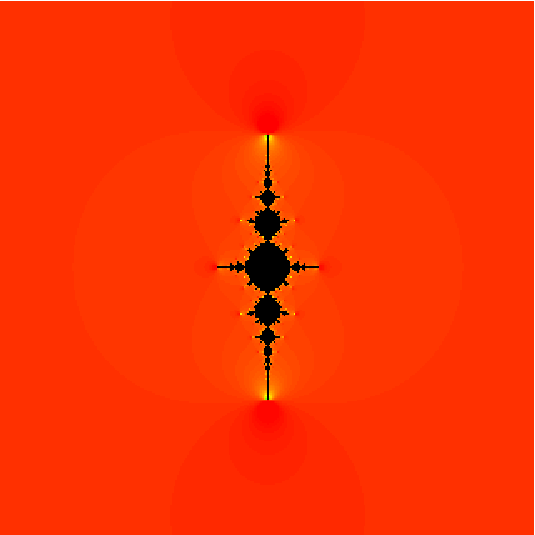}
 \end{minipage}
  \hspace{0.2cm}
 \begin{minipage}{3cm}
\centering
\includegraphics[width= 2.5cm]{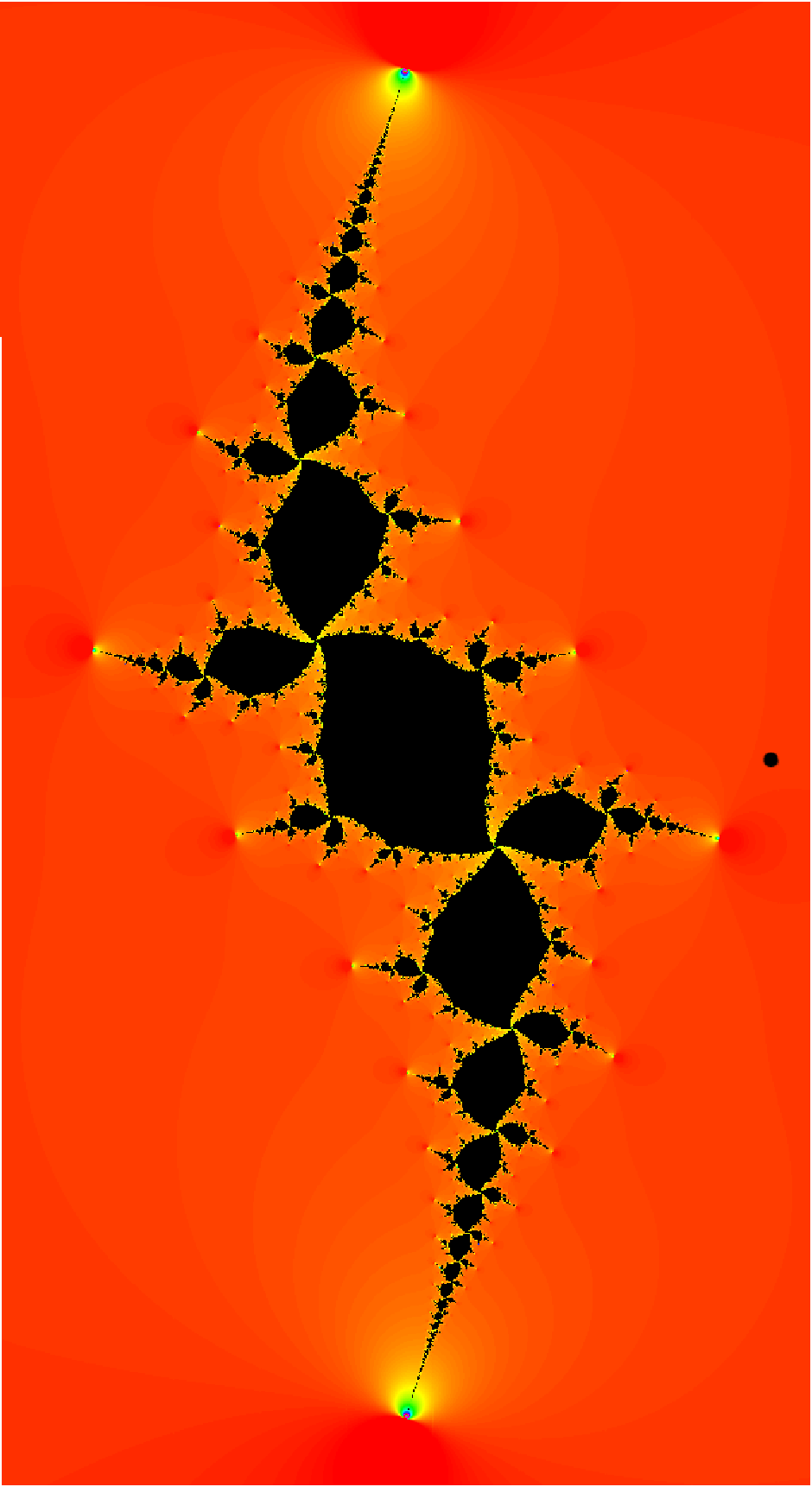}
 \end{minipage}

\end{tabular}
\end{center}
\caption{\small From left to right, the filled Julia set $K_A$ of $P_A$, for $B=1-A^2$ at the centres of 
the main hyperbolic component of $\mathcal{M}_1$, the period $2$ component and a period $3$ component.}\label{perplots}
\end{figure}

The parabolic Mandelbrot set $\mathcal{M}_1$ (Figure \ref{mg}, center) is the connectedness locus for $Per_1(1)$, which is usually parametrised by 
$$B=1-A^2,$$
the multiplier of the fixed point of $P_A$ other than $z=\infty$. In Figure \ref{perplots} we show the filled Julia set $K_A$ of $P_A$, for $B=1-A^2$ at the centres of 
the main hyperbolic component of $\mathcal{M}_1$, the period $2$ component and a period $3$ component.

For $A=0$, the map $P_0(z)=z+1/z$ is conformally conjugate on the Riemann sphere to the map
$$h(z)=\frac{z^2+1/3}{z^2/3+1}$$
via the conjugating function $\phi(z)=\frac{z+1}{z-1}$.
Using Fatou coordinates, it is easy to see that, for every $A$ such that $B=1-A^2 \in \mathcal{M}_1$, the dynamics of the map $P_A$ is conformally conjugate on $\mathcal{A}_A(\infty)$ to the dynamics of $P_0$ on $\mathcal{A}_0(\infty)=\H_l=\{x+iy\,|\,x>0\}$. The second author proved that this conjugacy still exists in the disconnected case, although restricted; more precisely, when $P_A$ has disconnected Julia set, there exists a conformal conjugacy between $P_0$ and $P_A$ defined between fundamental annuli about the respective filled Julia sets (see Proposition 4.2 in \cite{L1}). Roughly speaking, this means that the map $h$ encodes the dynamics of the $P_A$ on its basin of attraction of infinity. The Main Theorem will be obtained by gluing the map $h$ to the outside of the backward limit set $\Lambda_{a,-}$ of $\F_a$, using quasiconformal surgery (see Section \ref{reviewsurgery}).
We observe that $h|\S^1$ is topologically conjugate to the map $z\rightarrow z^2$, and hence via the Minkowski question mark function to the
union of the generators $\alpha,\beta$ of the modular group on $[-\infty,-1]$ and $[-1,0]$ respectively (once we have identified the ends $-\infty$ and $0$
of the negative real axis). So with hindsight the construction in \cite{BP} could have started with the family $Per_1(1)$ in place of quadratic polynomials. 

\section{The surgery}\label{surgery}
In this Section we develop a surgery construction which, for $a$ in a certain open subset $\mathring K$ of parameter space, will convert the correspondence $\F_a$ 
into a rational map in $Per_1(1)$, uniformly with respect to the parameter $a$. This construction is at the heart of 
our definition of the map $\chi:\m \to \mathcal{M}_1$ (see section \ref{reviewsurgery}), 
and of its extension to $\mathring K \setminus \m$ (see \ref{extension}). 
%

\subsection{The parameter space lune $\mathcal{L}_{\theta}$ and dynamical space lune $V_a$}\label{dynlune}

A {\it lune} is the name given in Euclidean geometry to the region trapped between two intersecting circular arcs. All our lunes will be open sets, in other words
a {\it lune} will not include its boundary arcs. The {\it vertex angle} of a lune is the angle between the arcs at their intersection points (the {\it vertices}).
It will be crucial to our construction that we restrict the correspondence $\F_a$ to a lune $V_a$ of vertex angle strictly less than $\pi$, such that the parabolic 
fixed point $P_a$ of $\F_a$ is one of the vertices of $V_a$, and $V_a$ contains $\Lambda_{-,a}\setminus\{P_a\}$. 
Moreover we will need the boundary $\partial V_a$ to move holomorphically with respect to $a$. 
In \cite{BL2}, as an application of a new Yoccoz-type inequality derived there for family $\F_a$, we proved that there exists an angle 
$\theta \in [\pi/3,\pi/2)$ such that $\m$ is contained in the closure of the lune (of vertex angle $2\theta$)
$$\L\f:=\{a: \big|arg\big(\frac{a-1}{7-a}\big)\big|<\theta\}.$$
More precisely, we proved in Theorem 3 of  \cite{BL2} that
$\m \subset \mathcal{L}_{\theta}\cup\{7\}$ and $ \m \cap \partial \mathcal{L}_{\theta} =\{7\}.$

We now define the lune $L_a$ in the $Z$-coordinate to be 
the open set bounded by the two arcs intersecting at $Z=1$ (the point $P_a$) and $Z=a$ which pass through $Z=1$ at 
angles $\pm \theta$ to the positive real axis. 
In the $z$-coordinate $L_a$ is a sector (as arcs of circles through $z=0$ and $z=\infty$ are straight lines); this sector varies with $a$ but its motion
is holomorphic with respect to $a$: indeed in the new coordinate defined by $z'=(a-1)z$ the lune $L_a$ is independent of $a$, so stationary.
In the Appendix of the current paper (Proposition \ref{dynamic-lune} and Corollary \ref{intersections}) we show that
for every $a \in  \mathcal{L}_{\theta} \cup \{7\}$, the forward image $\F_a(\overline{L_a})$ is contained in  $L_a\cup\{P_a\}$. It follows that the forward limit set 
$\Lambda_{+,a}$ is contained in $L_a\cup\{P_a\}$.

A computation (see Proposition 3.5 in \cite{BL1}) shows that when $a\ne 7$, in the coordinate $\zeta=Z-1$
the power series expansion of the branch of $\F_a$ which fixes $\zeta=0$ has the form:
$$\zeta \to \zeta + \frac{a-7}{3(a-1)}\zeta^2 + \ldots,$$
so the repelling direction is $arg(\frac{a-1}{a-7})$. For technical reasons which will become apparent shortly, we now choose 
$\hat\theta \in (\theta,\pi/2)$, and let $\hat L_a$ denote the lune which has vertices at $Z=1$ and $Z=a$, bounded by arcs which meet the real Z-axis at angles
$\pm \hat\theta$ (so  $L_a \subset \hat L_a$), and define
$$V_a:=J(\hat L_a).$$
For all $a\in \L\f$ the repelling parabolic direction is compactly contained in $V_a$ since $\hat\theta>\theta$ 
(this will be important for the proof of Proposition \ref{divarcs}). By construction
$V_a$ moves holomorphically with $a$. Moreover $\F_a^{-1}(\overline{V_a})\subset V_a\cup\{P_a\}$ and hence $\Lambda_{a,-} \subset V_a\cup \{P_a\}$.
\begin{lemma}\label{cr}
  $\m$ is the set of parameters $a \in \L\f\cup \{7\}$ for which the critical point $c_a$ is in $\Lambda_-(\F_a)$. In particular, $\m$ is closed.
\end{lemma}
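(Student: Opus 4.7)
My strategy is to combine two ingredients: the inclusion $\m\subset \L\f\cup\{7\}$ established in Theorem 3 of \cite{BL2}, and the parabolic-like structure of $\F_a$ on a neighbourhood of $\Lambda_{a,-}$ developed in \cite{BL1}.

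First I would observe that the lune $\L\f$ (with $\theta<\pi/2$) is contained in $\overline{\D}(4,3)$, so for every $a\in \L\f\cup\{7\}$ one has $a\in\m$ iff $\Lambda(\F_a)$ is connected. Since $\Lambda(\F_a)=\Lambda_{a,-}\cup\Lambda_{a,+}$ with $\Lambda_{a,+}=J(\Lambda_{a,-})$ and $\Lambda_{a,-}\cap\Lambda_{a,+}=\{P_a\}$, connectedness of $\Lambda(\F_a)$ is equivalent to connectedness of $\Lambda_{a,-}$ alone.

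Next I would invoke parabolic-like mapping theory. By the construction of \cite{BL1} together with Corollary \ref{intersections}, namely $\F_a^{-1}(\overline{V_a})\subset V_a\cup\{P_a\}$, the $(2:1)$ branch of $\F_a$ from $\F_a^{-1}(V_a)$ to $V_a$ is a parabolic-like map in the sense of \cite{L1}, whose filled Julia set is exactly $\Lambda_{a,-}$ and whose unique critical point is $c_a$. The Douady--Hubbard style dichotomy for parabolic-like maps (see \cite{L1}) then gives: $\Lambda_{a,-}$ is connected if and only if $c_a\in\Lambda_{a,-}$. Combined with the preceding paragraph, this yields the set-theoretic description of $\m$ asserted in the lemma.

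Finally, to prove $\m$ closed I would argue that the condition $c_a\notin\Lambda_{a,-}$ is open in $\L\f$: if some iterate of the $(2:1)$ branch of $\F_{a_0}$ sends $c_{a_0}$ outside the closed parabolic-like domain, then since $V_a$ moves holomorphically in $a$ and $c_a$ is a characteristic point (fixed in $Z$-coordinate), the same happens for all $a$ sufficiently close to $a_0$. Hence $\{a\in\L\f:c_a\in\Lambda_{a,-}\}$ is relatively closed in $\L\f$; adjoining the single point $a=7$, which lies in $\m$ by Theorem 3 of \cite{BL2}, keeps the set closed in $\C$.

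The only delicate point is ensuring that the parabolic-like axioms of \cite{L1} are satisfied uniformly over all $a\in\L\f\cup\{7\}$, in particular up to the degenerate parameter $a=7$; but this is exactly what the lune construction $V_a$ of the Appendix (together with the angle estimate $\hat\theta>\theta$ that places the repelling parabolic direction compactly inside $V_a$) is designed to achieve, so no additional obstacle arises.
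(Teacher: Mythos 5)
Your core dichotomy --- $\Lambda_{a,-}$ is connected if and only if $c_a\in\Lambda_{a,-}$ --- is exactly the fact the paper uses, but the paper proves it directly and elementarily: if $c_a\notin\Lambda_{a,-}$ then some $\F_a^{-n}(V_a)$ is a pair of disjoint topological discs, so $\Lambda_{a,-}$ is disconnected (indeed a Cantor set), while if $c_a\in\Lambda_{a,-}$ every $\F_a^{-n}(V_a)$ is connected and so is the nested intersection. This needs only the degree-two branched covering $\F_a|_{V'_a}:V'_a\to V_a$ (Lemma \ref{set}) and the inclusion $\F_a^{-1}(\overline{V_a})\subset V_a\cup\{P_a\}$ of Corollary \ref{universal-lune}, which holds for \emph{every} $a\in\L\f\cup\{7\}$. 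Your route instead invokes the full parabolic-like structure of \cite{L1}, and that is where the genuine gap lies: a parabolic-like map requires dividing arcs, and in this paper these are constructed (Proposition \ref{divarcs}) only for $a$ in the doubly truncated lune $\mathring K$, i.e.\ away from $a=7$; at $a=7$ the quadratic coefficient $\frac{a-7}{3(a-1)}$ of the parabolic branch vanishes, the fixed point acquires the two-petal structure of $P_0$, and the pre-Fatou coordinate used to build the arcs degenerates. So your closing assertion that the Appendix guarantees the parabolic-like axioms ``uniformly up to $a=7$ with no additional obstacle'' is unjustified --- the Appendix supplies only the inclusion quoted above, which is precisely why the paper argues with preimages of $V_a$ rather than with parabolic-like theory. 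The repair is easy (drop the parabolic-like packaging and argue as the paper does), but as written your argument does not cover the root point, where the stated equality of sets still requires $c_7\in\Lambda_-(\F_7)$.

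A secondary, smaller issue is the closedness step: a set that is relatively closed in the open lune $\L\f$, together with the single point $a=7$, need not be closed in $\C$ (the lune itself union $\{7\}$ is a counterexample). To conclude closedness you must also exclude sequences $a_n\in\m$ accumulating at points of $\partial\L\f\setminus\{7\}$ (and at $a=1$), for instance by running the non-escaping characterisation on a slightly larger lune $\mathcal{L}_{\theta'}$ with $\theta<\theta'<\pi/2$ and invoking $\m\cap\partial\L\f=\{7\}$ from \cite{BL2}, or by observing that the escape condition is open on a neighbourhood of $\overline{\L\f}\setminus\{1\}$ inside the Klein combination locus. The paper is admittedly terse here too, but your explicit deduction as stated is a non sequitur, so some such step should be added.
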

\begin{proof}
From the definition of $\m$, for each $a \in \m$ the set $\Lambda_-(\F_a)$ is connected.
 If the critical point $c_a$ is not in $\Lambda_-(\F_a)$, then  
 some $\F_a^{-n}(V_a)$ is a pair of topological discs. Hence $\Lambda_-(\F_a)$ is disconnected (indeed a Cantor set).
 Conversely, if $c_a\in \Lambda_-(\F_a)$ then each $\F_a^{-n}(V_a)$ is connected and hence so is $\Lambda_-(\F_a)$.
\end{proof}

\subsection{The parameter space $\mathring K\subset \L\f$}\label{parlune}
Define $V'_a:=\F_a^{-1}(V_a)$. 
For our surgery construction we will need the critical point $c_a$ of $\F\r$ to be in $V'_a$, so we first restrict our parameter space 
to the subset $\L\f'\subset \L\f$ for which the critical value $v_a$ of $\F\r$ is in $V_a$.


\begin{prop}\label{truncatelune}
$\L\f'$ is a simply connected open subset of $\L\f$, with the properties that
$\m \subset \L\f' \cup \{7\}$ and 
$\partial \L\f'\cap \m =\{7\}$.
\end{prop}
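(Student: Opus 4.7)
My plan is to treat the four assertions---openness in $\L\f$, the inclusion $\m\setminus\{7\}\subset\L\f'$, the identity $\partial\L\f'\cap\m=\{7\}$, and simple connectedness---in turn, exploiting an explicit formula for the critical value and the change to $z'$-coordinates in which $V_a$ is constant.

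Openness is quick: in $Z$-coordinates one computes $v_a=J_a(Cov_0^Q(-1))=J_a(2)=2/(3-a)$, which is holomorphic in $a$; combined with the holomorphic motion of $V_a$ on $\L\f$ recalled in Section \ref{dynlune}, the condition $v_a\in V_a$ is manifestly open. For the inclusion $\m\setminus\{7\}\subset\L\f'$, take $a\in\m\setminus\{7\}$: Lemma \ref{cr} gives $c_a\in\Lambda_-(\F_a)$, and forward $\F_a$-invariance of $\Lambda_-(\F_a)=\bigcap_n\F_a^{-n}(\overline{\Delta}_J)$ yields $v_a=\F_a(c_a)\in\Lambda_-(\F_a)$. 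The containment $\Lambda_+(\F_a)\subset \hat L_a\cup\{P_a\}$ together with $\Lambda_-(\F_a)=J(\Lambda_+(\F_a))$ gives $v_a\in V_a\cup\{P_a\}$, and the equality $v_a=P_a$ would force $2/(3-a)=1$, i.e.\ $a=1\notin\L\f$. So $v_a\in V_a$ and $a\in\L\f'$. The identity $\partial\L\f'\cap\m=\{7\}$ is then immediate: the inclusion $\subset$ holds because $\m\setminus\{7\}$ sits in the open set $\L\f'$, while $\supset$ follows because $\m\setminus\{7\}\subset\L\f'$ accumulates at the root point $a=7$ of $\m$, together with $7\in\partial\L\f\setminus\L\f$.

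The main obstacle is simple connectedness. I would pass to $z'$-coordinates $(z'=(a-1)z)$, in which $V_a$ becomes a fixed sector $\tilde V$ with vertices at $0$ and $\infty$, and the critical value is given by the M\"obius function $v'_a=(a-1)/(2-a)$. Writing $v'_a\in\tilde V$ as an argument condition yields $|\arg((a-1)/(a-2))|<\hat\theta$, which defines a second lune $\tilde\L\subset\hat\C$ with vertices $a=1$ and $a=2$. Thus $\L\f'=\L\f\cap\tilde\L$ is the intersection of two lunes in the $a$-plane, and a direct analysis of the four circular boundary arcs (two joining $1$ to $7$, two joining $1$ to $2$) should show that this intersection is a single Jordan domain with cusps at $a=2$ and $a=7$, hence simply connected.
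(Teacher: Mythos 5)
Your proposal is essentially the paper's own proof: the same computation of the critical value ($J_a(2)$ in the $Z$-coordinate, i.e.\ $-(a-1)/(a-2)$ after passing to $z'$-coordinates where $V_a$ is stationary), the same characterisation of $\L\f'$ as the set of $a\in\L\f$ with $|\arg((a-1)/(a-2))|<\hat\theta$ (which the paper phrases as excising from $\L\f$ the pair of closed round discs through $a=1$ and $a=2$), and the same use of $v_a\in\Lambda_{a,-}\subset V_a$ for the inclusion of $\m$, with your treatment in fact slightly more careful than the paper's in ruling out $v_a=P_a$ and in checking that $a=7$ really lies on $\partial\L\f'$. The only slip is cosmetic: the boundary points $a=2$ and $a=7$ of $\L\f'$ are corners of angles $2\hat\theta$ and $2\theta$ respectively (compare Figure \ref{k}), not cusps, which does not affect the simple-connectedness conclusion.
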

\begin{proof}
Write $\F_a$ as $J_a \circ Cov_0^Q$ (see Section \ref{corr}).
In $Z$ coordinates, the critical point of $Cov_0^Q$ which belongs to $V_a$ is $Z=-1$, and the corresponding 
critical branch of $Cov_0^Q$ sends $-1 \to 2$, so the critical value of $\F_a$ in $V_a$ is $J(2)$. 

For the remainder of the proof it will be convenient to change to the coordinate $$z'=(a-1)z=(a-1)(Z-1)/(a-Z),$$ where 
$\hat{L}_a$ and $V_a$ are independent of $a$.
In this new coordinate $\hat{L}_a$ is the subset of the right-hand half-place bounded by the straight lines through
at the origin at angles $\pm\hat\theta$ to the positive real axis,  
the involution is $J(z)=-z$, and the critical value is $v_a=-(a-1)/(a-2)$.

By definition ${\mathcal L}'_{\theta}$ is the subset of ${\mathcal L}_\theta$ for which $v_a\in V_a$, that
is to say for which $J(v_a)\in \hat{L}_a$, equivalently $(a-1)/(a-2)\in \hat{L}_a$: the $a\in {\mathcal L}_\theta$ for which $arg((a-1)/(a-2))\in (-\hat\theta,\hat\theta)$. Thus ${\mathcal L}'_{\theta}$ is obtained from ${\mathcal L}_\theta$
by excising its intersection with the pair of closed round discs which have boundary circles passing through both $a=1$ and $a=2$ at angles $\pm\hat\theta$ to the real axis. The `truncated lune' ${\mathcal L}'_{\theta}$, union the point $a=7$, certainly contains $\mathcal{M}_{\Gamma}$ since when $a\in \mathcal{M}_{\Gamma}$ we know that $v_a \in \Lambda_-\subset V_a$, and the boundary of ${\mathcal L}'_{\theta}$ can only meet that of $\mathcal{M}_\Gamma$ at $a=7$ since ${\mathcal L}'_{\theta}$ is open and $\mathcal{M}_{\Gamma}$ is closed.
\end{proof}

\begin{lemma}\label{set}
For $a \in \mathcal{L}'_{\theta} \cup \{7\}$ the restriction $\F_a|_{V'_a}: V_a' \rightarrow V_a$
 is a branched covering map of degree $2$, which is holomorphic in both variables and has a persistent parabolic fixed point at $P_a \in \partial V_a$. 
\end{lemma}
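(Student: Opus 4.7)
The plan is to unpack $\F_a|_{V_a'}$ via the factorisation $\F_a = J_a \circ \mathrm{Cov}_0^Q$ from Section \ref{corr}, use the containment $\F_a^{-1}(\overline{V_a}) \subset V_a \cup \{P_a\}$ from Corollary \ref{intersections} to extract the $(2:1)$ structure, then use the defining condition of $\L\f'$ to promote this to a proper branched cover of degree $2$, and finally read off holomorphy in both variables from the polynomial relation (1.1) and the parabolic fixed point from the power series expansion recalled in Section \ref{dynlune}.

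First I would recall that $\F_a = J_a \circ \mathrm{Cov}_0^Q$ and that, since both $J_a$ and $\mathrm{Cov}_0^Q$ are involutions in the appropriate sense, $\F_a^{-1} = \mathrm{Cov}_0^Q \circ J_a$; by definition $V_a' = \F_a^{-1}(V_a)$. By Corollary \ref{intersections}, for every $a \in \L\f \cup \{7\}$ one has $\F_a^{-1}(\overline{V_a}) \subset V_a \cup \{P_a\}$, so since $\F_a$ is a $(2:2)$ correspondence \emph{both} $\F_a$-preimages of each $w \in V_a$ already lie in $V_a'$, giving a $(2:1)$ restriction $\F_a|_{V_a'}\colon V_a' \to V_a$ with fibres counted with multiplicity. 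To promote this to a branched covering, observe that $V_a$ is a topological disk and that the unique critical point of the $(2:1)$ branch of $\F_a$ on $V_a'$ is, in $Z$-coordinates, the point $Z=-1$, with critical value $v_a = J_a(2)$. Proposition \ref{truncatelune} characterises $\L\f'$ precisely as the parameters with $v_a \in V_a$, equivalently $c_a \in V_a'$. A Riemann-Hurwitz count with $V_a$ simply connected, fibre degree $2$, and a single simple critical value then forces $V_a'$ to be a single topological disk (rather than a disjoint union of two) and makes $\F_a|_{V_a'}$ a proper holomorphic degree $2$ branched covering, ramified only above $v_a$.

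For the remaining assertions, holomorphy jointly in $z$ and $a$ is immediate from the fact that the graph of the correspondence is cut out by the polynomial relation (1.1), which is polynomial in $z$, $w$ and $a$, with non-vanishing $w$-resultant away from the critical point $c_a$. For the parabolic fixed point, $P_a$ corresponds to $Z=1$, which is simultaneously a critical point of $Q$ (hence fixed by the relevant branch of $\mathrm{Cov}_0^Q$) and a fixed point of $J_a$, so it is fixed by $\F_a$; it lies on $\partial V_a = \partial J_a(\hat L_a)$ by construction of the lune, and the expansion
$$\zeta \to \zeta + \frac{a-7}{3(a-1)}\zeta^2 + \ldots$$
in $\zeta = Z-1$ quoted in Section \ref{dynlune} shows that the fixing branch has multiplier $1$. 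The only slightly delicate point in the argument is the Riemann-Hurwitz step, where the definition of $\L\f'$ is essential: it is what rules out the disconnected alternative for $V_a'$ and so produces a \emph{branched} cover of degree $2$ rather than two copies of an unramified one.
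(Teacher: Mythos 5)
The step that does not hold up as written is the passage from Corollary \ref{intersections} to ``a $(2:1)$ restriction $\F_a|_{V_a'}\colon V_a'\to V_a$''. What you actually justify there --- that both $\F_a$-preimages of every $w\in V_a$ lie in $V_a'$ --- is true by the very definition $V_a'=\F_a^{-1}(V_a)$ and concerns only the backward direction. The substance of the lemma is that the restriction is a \emph{single-valued} map: a priori a point $z\in V_a'$ could have both of its images under the $(2:2)$ correspondence inside $V_a$, in which case $\F_a|_{V_a'}$ would be multivalued and there would be no ``$(2:1)$ branch'' whose critical point you can feed into the Riemann--Hurwitz count. The paper settles exactly this point in one line by a different containment: $V_a\subset\Delta_J$, together with Proposition 3.4 of \cite{BL1}, which asserts that $\F_a$ with domain $\F_a^{-1}(\Delta_J)$ and codomain $\Delta_J$ is already a single-valued holomorphic map of degree $2$ (this is the Klein combination ping-pong); the lemma is then a restriction of that map. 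Your proposal never invokes this or any equivalent fact, so single-valuedness is left unproved, and the facts you do cite (the $(2:2)$ bidegree and the containment $\F_a^{-1}(\overline{V_a})\subset V_a\cup\{P_a\}$ quoted bare) do not by themselves deliver it.

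The gap can be closed with the ingredients you already introduced, but it needs an actual argument. Suppose $z\in V_a'$ had two distinct images $J_a(w_1)\ne J_a(w_2)$ in $V_a$, where $\{w_1,w_2\}=Cov_0^Q(z)$. Then $w_1,w_2\in J_a(V_a)=\hat L_a$, and since $w_2\in Cov_0^Q(w_1)=Cov_0^Q\bigl(J_a(J_a(w_1))\bigr)\subset\F_a^{-1}\bigl(J_a(w_1)\bigr)$ with $J_a(w_1)\in V_a$, the containment $\F_a^{-1}(\overline{V_a})\subset V_a\cup\{P_a\}$ forces $w_2\in V_a\cup\{P_a\}$; but in the $z'$-coordinate $\hat L_a$ and $V_a=J(\hat L_a)$ are opposite open sectors of half-angle $\hat\theta<\pi/2$ with vertex $P_a$, so $\hat L_a\cap(V_a\cup\{P_a\})=\emptyset$, a contradiction. (If the two images coincide, $z$ is the critical point and single-valuedness is not in question.) With that inserted, the remainder of your argument --- properness, Riemann--Hurwitz using $v_a\in V_a$ to get a connected $V_a'$ and a genuinely branched degree-$2$ cover, joint holomorphy from the polynomial relation, and the parabolic fixed point from the factorisation and the series expansion --- is sound and is a more self-contained route than the paper's, which simply quotes Proposition 3.4 of \cite{BL1}. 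One minor caveat: at $a=7$ the coefficient $(a-7)/(3(a-1))$ vanishes, so the quoted expansion should be used there only to read off that the multiplier is still $1$.
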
 

\begin{proof}
For every $a \in \mathcal{L}'_{\theta} \cup \{7\}$ we have $V_a\subset \Delta_J$, 
and by Proposition 3.4 in \cite{BL1} (see also the discussion in Section \ref{corr} and Figure \ref{fd3})  the restriction of $\F_a$ to 
domain $\F_a^{-1}(\Delta_J)$ and codomain $\Delta_J$
is a degree $2$ holomorphic map, so that $\F_a|_{V_a'}: V_a' \rightarrow V_a$ is also holomorphic and of degree $2$ in the dynamical
variable. The fact that it is also holomorphic in the parameter $a$ follows from its explicit representation as a polynomial relation.
\end{proof}

\begin{lemma}\label{hollunes}
 For $a \in \mathcal{L}'_{\theta}$, the boundary $\partial V_a$ of the lune $V_a$ and its inverse image $\partial V'_{a}:= \F_a^{-1}(\partial V_a)$ move holomorphically.
\end{lemma}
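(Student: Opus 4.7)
I plan to prove the two statements in sequence. For $V_a$, the holomorphic motion is essentially immediate in the coordinate $z' = (a-1)z$ used in the proof of Proposition \ref{truncatelune}: in that coordinate $V_a$ is a fixed sector $V$, independent of $a$, bounded by two rays from the origin at angles $\pm\hat\theta$ to the positive real axis. Translating back, $h_a(z) = (a_0-1)z/(a-1)$ gives a holomorphic motion of $V_{a_0}$ onto $V_a$ in $z$-coordinates.

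For $V'_a$, I would work throughout in $z'$-coordinates. By Lemma \ref{set}, $\F_a|_{V'_a}: V'_a \to V$ is a degree-$2$ branched holomorphic cover with a unique critical point $c_a$ and critical value $v_a$, both holomorphic in $a$, and with $v_a$ in the interior of $V$ by the definition of $\L\f'$. The strategy is first to build a holomorphic motion of the boundary $\partial V'_a$ by tracking preimages of boundary points under $\F_a$, and then to extend to the interior via Sl\l odkowski's theorem. For each $w \in \partial V \setminus \{0,\infty\}$, the condition $v_a \in \mathrm{int}(V)$ guarantees $w \neq v_a$ for every $a \in \L\f'$, so the two roots $z'$ of the defining polynomial relation $P(a,z',w)=0$ remain distinct as $a$ varies. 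The implicit function theorem therefore produces two local holomorphic branches, and since $\L\f'$ is simply connected (Proposition \ref{truncatelune}) and the two branches never collide, each extends to a single-valued holomorphic function on all of $\L\f'$, chosen to match the two preimages of $w$ under $\F_{a_0}$ at $a=a_0$. Letting $w$ range over $\partial V$, I obtain the desired holomorphic motion of $\partial V'_{a_0}$ onto $\partial V'_a$.

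By Sl\l odkowski's theorem this boundary motion extends to a holomorphic motion of $\widehat \C$, whose restriction to $V'_{a_0}$ provides the motion of $V'_a$. A simple continuity argument, using that $h_{a_0}$ is the identity and $\L\f'$ is connected, shows that the image lies in $V'_a$ (and not in the other component of $\widehat\C\setminus\partial V'_a$). The main obstacle I anticipate is the behaviour of $\partial V'_a$ near the parabolic fixed point $P_a=0$ and at the preimages of $\infty$, where $\partial V'_a$ has corners rather than being a smooth Jordan curve; however, Sl\l odkowski's theorem applies to holomorphic motions of arbitrary subsets of $\widehat\C$, so this technicality is automatic, and the essential input remains the confinement of $v_a$ to the interior of $V$, which prevents any branching in $a$ as $w$ traverses $\partial V$.
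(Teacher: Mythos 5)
Your strategy is sound and in places more careful than the paper's own argument. The paper treats $V_a$ exactly as you do (it is constant in the $z'$-coordinate), but then obtains the motion of $V'_a$ in a single line, by lifting the motion of $V_a$ through the holomorphically varying degree-two maps $\F_a|_{V'_a}$, with no appeal to Slodkowski's theorem. You instead lift only over $\partial V_a$, where the two preimage branches continue analytically over the simply connected $\mathcal{L}'_{\theta}$, and then extend by Slodkowski and identify the image of $V'_{a_0}$ with $V'_a$. That is a legitimate alternative, and in one respect a safer one: the identity motion in $z'$ does not carry $v_{a_0}$ to the moving critical value $v_a$, so a pointwise lift over interior points requires care precisely at those $z$ for which $\F_{a_0}(z)$ coincides with $v_a$ at some parameter, an issue your boundary-plus-Slodkowski route avoids.

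There is, however, one incomplete step. You justify ``the two roots $z'$ of $P(a,z',w)=0$ remain distinct'' solely by $w\neq v_a$. Since the correspondence has bidegree $(2:2)$, the two $z$-roots over a given $w$ collide exactly when $w$ is a branch value of the $w\to z$ direction, and there are \emph{two} such values: in the $Z$-coordinate they are $J_a(2)=v_a$ (double preimage at the critical point $Z=-1=c_a$) and $J_a(-2)$ (double preimage at the parabolic point $Z=1=P_a$, which does lie on $\partial V'_a$). Keeping $v_a$ in the interior of $V_a$ rules out only the first; to exclude branching of your preimage functions you must also check that $J_a(-2)$ never lies on $\partial V_a$ for $a\in\mathcal{L}'_{\theta}$. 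This is true but needs an argument: in the $z'$-coordinate $J_a(-2)$ is the point $3(a-1)/(a+2)$, and for $a\in\overline{\D}(4,3)\setminus\{1\}\supset\mathcal{L}_{\theta}$ one has $\mathrm{Re}\,\frac{a-1}{a+2}=\frac{|a|^2+\mathrm{Re}\,a-2}{|a+2|^2}>0$ (use $|a|^{2}\ge 8\,\mathrm{Re}\,a-7$ and $\mathrm{Re}\,a\ge 1$), so $\bigl|\arg\bigl(3(a-1)/(a+2)\bigr)\bigr|<\pi/2<\pi-\hat\theta$, whereas $\overline{V}$ lies in the sector $|\arg z'-\pi|\le\hat\theta$. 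Hence $J_a(-2)\notin\overline{V}$, the branches over $\partial V\setminus\{0,\infty\}$ never collide, and with this supplement your proof goes through.
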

\begin{proof}
Choose a base point $a_0 \in \mathcal{L}'_{\theta}$. To say that $\partial V_a$ moves holomorphically is to say that there exists a  homeomorphism 
$\psi_a: \partial V_{a_0} \rightarrow \partial V_a$ depending
holomorphically on the parameter $a$. But in the coordinate  $z'=(a-1)z$ we may take $\psi_a$ to be the identity
as in this coordinate $V_a$ is independent of $a$: 
hence $V_a$ moves holomorphically in any coordinate varying holomorphically with $a$.
The correspondence $\F_a: V'_{a }\rightarrow V_a$ is holomorphic in both variables, and for $a \in \mathcal{L}'_{\theta}$ the critical point belogs to $V_a'$, so by lifting the holomorphic motion
of $\partial V_a$ we obtain a holomorphic motion of $\partial V'_{a}$. 
\end{proof}

\subsubsection{The doubly truncated lune $\mathring K\subset \L\f'$}\label{dtl}
Let $N$ be a small round closed disc neighbourhood of $a=7$ in the plane of the parameter $a$, and let $$K= \overline \L\f' \setminus {\mathring N} \mbox{ (see Figure \ref{k})}.$$ 
Then $K$ is a compact set and its interior $\mathring K$ is a topological disc. 
The \textit{doubly truncated lune} $\mathring K$ is the set of parameter values for which we will perform the surgery and which will ultimately be the domain 
on which shall define the map $\chi$. For the surgery construction, the existence of forward invariant 
arcs $\gamma_{a,i},\,i\in\{1,2\}$ for $\F_a$, with the properties listed in the following Proposition, 
will be essential
(see Section \ref{s1}). 

 \begin{figure}
\centering
\psfrag{a}{\small $a=7$}
\psfrag{b}{\small $a=1$}
\psfrag{c}{\small $a=2$}
\psfrag{k}{\small $K$}
\psfrag{2}{\small $2\theta$}
\psfrag{3}{\small $2\hat\theta$}
	\includegraphics[width= 8cm]{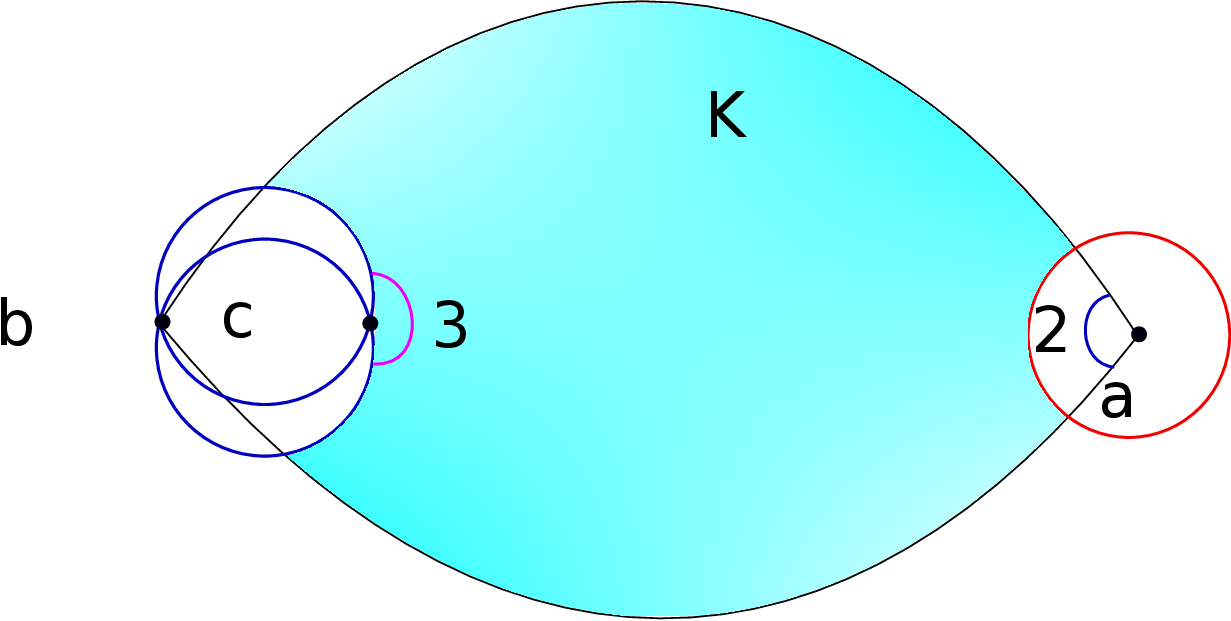}
\caption{\small The doubly truncated lune $K$ in parameter space. Excising the left-hand end of the lune $\L\f$ gives a truncated lune $\L\f'$ with the property that $\forall a \in \L\f'$ the critical
value $v_a$ of $\F_a$ lies in the dynamical lune $V_a$ (Proposition \ref{truncatelune}). Removing the intersection of $\L\f'$ with an arbitrarily small disc $N$ 
at the right-hand end gives $K$ with the property that there exists $\gamma_a$ in $V_a$ moving holomorphically with $a \in \mathring K$ (Proposition \ref{divarcs}).}
\label{k}
\end{figure}
\begin{prop}\label{divarcs}
There exists a family of forward invariant arcs $\gamma_{a,i},\,i\in\{1,2\}$ for $\F_a$, parametrised by $a\in{ \mathring K}$, such that each
$\gamma_a:= \gamma_{a,1}\cup \gamma_{a,2}$ is connected,  moves holomorphically with $a\in \mathring K$, and meets $\partial V_a$ transversally.
\end{prop}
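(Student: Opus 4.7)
The plan is to take $\gamma_{a,i}$ to be the pullback under a repelling-petal Fatou coordinate at $P_a$ of a horizontal half-line, extended outward by forward iteration of the branch $f_a$ of $\F_a$ fixing $P_a$ until the arc hits $\partial V_a$. The two inputs that make this work are: (i) the parabolic fixed point $P_a$ is simple and persistent of multiplier $1$ on $\K$, with the coefficient $c(a)=(a-7)/(3(a-1))$ of its second-order term bounded and bounded away from $0$ (because $\K$ is disjoint from a neighbourhood of both $a=1$ and $a=7$), so Fatou coordinates exist and move holomorphically in $a$; and (ii) the condition $\hat\theta>\theta$ places the repelling axis strictly inside $V_a$, so the repelling petal can be chosen to sit inside $V_a$. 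Horizontal lines are tautologically invariant under the conjugated dynamics $w\mapsto w-1$, so their preimages are forward invariant under $f_a$.

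First I would set up the Fatou coordinate. By the classical construction for simple parabolic fixed points of persistent multiplicity (Shishikura, Milnor), there is, for each $a\in\K$, a repelling petal $\P_a\subset V_a$ at $P_a$ and an injective holomorphic map $\Phi_a:\P_a\to\C$ with $\Phi_a\circ f_a=\Phi_a-1$, depending holomorphically on $a$ after a holomorphic normalisation (for instance, by requiring $\Phi_a$ to vanish at some holomorphically marked point in $\P_a$). Using compactness of $K$ and the uniform non-degeneracy of $c(a)$, I may choose a single $\delta>0$ and $R_0>0$ so that the half-lines $\ell_i:=\{w:\Im w=(-1)^i\delta,\ \Re w>R_0\}$ lie in $\Phi_a(\P_a)$ for every $a\in\K$. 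Define $\gamma_{a,i}^{0}:=\Phi_a^{-1}(\ell_i)$; these are real-analytic arcs inside $\P_a$ whose closures each contain $P_a$, so $\gamma_{a,1}^{0}\cup\gamma_{a,2}^{0}\cup\{P_a\}$ is connected.

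Next I extend outward. Because $\ell_i\subset\ell_i-1$, we have $\gamma_{a,i}^{0}\subset f_a(\gamma_{a,i}^{0})$, and I set
\[
\gamma_{a,i}\;:=\;\Bigl(\bigcup_{n\ge 0}f_a^{\,n}\bigl(\gamma_{a,i}^{0}\bigr)\Bigr)\cap\overline{V_a}\cup\{P_a\}.
\]
The nested structure yields $f_a(\gamma_{a,i})\subset\gamma_{a,i}$, which is the forward invariance required. Because the repelling dynamics expels every compact subset of $V_a\setminus\{P_a\}$ from $\P_a$, and because $V_a$ is bounded in the normalised coordinate, the extended arc must exit $V_a$; I let $W_{a,i}\in\partial V_a$ denote its exit point. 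Holomorphic motion of $\gamma_{a,i}$ in $a\in\K$ is now automatic: $\Phi_a$ moves holomorphically on $\P_a$, the defining lines $\ell_i$ do not depend on $a$, and each $f_a^{\,n}$ is holomorphic in both variables, so the map $a\mapsto\gamma_{a,i}$ is a holomorphic motion, equivariant with respect to the dynamics.

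The main obstacle is the transversality of $\gamma_{a,i}$ with $\partial V_a$ at $W_{a,i}$, made uniform over $\K$. At a fixed base parameter $a_0$, $\gamma_{a_0,i}$ is a real-analytic curve and $\partial V_{a_0}$ is a circular arc, so non-transversality at $W_{a_0,i}$ is a codimension-one condition on $\delta$; hence a generic $\delta\in(0,\delta_0)$ gives transversality at $a_0$. Transversality is an open condition in $a$, and by using the holomorphic motion and connectedness of $\K$ together with a further generic shrinking of $\delta$ if necessary, one obtains transversality throughout $\K$. This is where the setup of the parameter set $\K$ (excising a neighbourhood of $a=7$ and of $a=1$) pays off, since only on $\K$ are the Fatou coordinates uniformly non-degenerate, allowing a single choice of $\delta$ and $R_0$ to work globally.
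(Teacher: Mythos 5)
Your core idea --- repelling Fatou coordinates depending holomorphically on $a$, horizontal lines (which are invariant under the translation model), and compactness of $K$ to make the construction uniform --- is exactly the paper's starting point. But the way you get the arcs to reach $\partial V_a$ introduces two genuine gaps. First, you put the petal \emph{inside} $V_a$ and then prolong the Fatou-preimages of the lines $\ell_i$ by global forward iteration of $\F_a$ until they ``exit'' $V_a$, truncating at the first exit point $W_{a,i}$. That exit point is the intersection of a parameter-dependent curve with $\partial V_a$; it does not move holomorphically with $a$ (at best real-analytically, and it can jump at a tangency), so your claim that the holomorphic motion of the truncated arcs is ``automatic'' is unjustified --- the natural motion carries $\gamma_{a_0,i}$ to a curve that is not your $\gamma_{a,i}$, because the truncation corresponds to different Fatou-coordinate parameters for different $a$. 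Moreover, once the arc leaves the petal, its forward images are governed by the global $2{:}1$ map $\F_a|_{V_a'}$: they can pass through the critical point or contain points identified by the deck involution (so embeddedness can fail), and there is no reason the first exit occurs near $P_a$, on the intended side, or transversally. Second, your transversality argument is only generic: for a fixed $\delta$, non-transversality is a codimension-one condition in $(a,\delta)$, so a generic choice of $\delta$ still leaves a possibly nonempty codimension-one set of parameters $a\in\K$ where a tangency occurs; ``open in $a$ plus connected $\K$'' does not upgrade this to transversality for \emph{every} $a$, which is what the surgery needs (and a tangency is precisely where the exit point, hence your arc, fails to vary even continuously).

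The paper sidesteps both problems by making the petal straddle $\partial V_a$ rather than sit inside it. Working in the $z'=(a-1)z$ coordinate, where $V_a=V$ and $P_a$ are independent of $a$, it uses the explicit estimate $F_a(w)=w+1+d(a)/w+O(1/w^2)$ and compactness of $K$ to produce a single sector-shaped region $O$ (hence petals $\Xi_a=\psi_a^{-1}(O)$ moving holomorphically) large enough to contain a neighbourhood of $P$ in $\partial V$ for all $a\in\K$, and then chooses two \emph{fixed} points $W^1,W^2\in\partial V$, one on each side of $P$, lying in $\Xi_a$ for all $a$. The arcs are defined entirely inside the petal as $\phi_a^{-1}$ of the horizontal half-lines ending at $\phi_a(W^i)$: the endpoints are literally constant in the $z'$-plane, so the arcs (endpoints included) move holomorphically and are compatible with the motion of $\partial V_a\cup\partial V_a'$; forward invariance is immediate; and transversality at $W^i$ holds for every $a$ because $\hat\theta>\theta$ puts the repelling axis strictly inside $V$, so the images $\psi_a([W^i,P))$ of the two boundary rays are nowhere parallel to the real axis and the horizontal lines cross them transversally. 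If you want to salvage your version, the fix is exactly this: enlarge the petal uniformly over $K$ so it reaches $\partial V$, anchor the horizontal lines at the (constant) boundary points $W^i$, and drop both the global forward extension and the genericity argument.
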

\begin{proof}
For the proof of this proposition we will work in the $z'=(a-1)z$ coordinate, in which the lune $V_a$ is always the subset of the left-hand half-plane bounded 
by the straight lines through the origin at angles $\pm \hat \theta$ to the negative real axis, and the parabolic fixed point $P_a$ is the origin. So we 
will write $V$ for $V_a$, and $P$ for  $P_a$.

The idea of the proof is the following. Using pre-Fatou coordinates (denoted by $\psi_a$) and compactness, we will construct a subset $\mathring K$ of $\mathcal{L}_{\theta}'$ and repelling petals $\Xi_a$ of $\F_a$ at $P$ moving holomorphically with the parameter sufficiently large enough that
 there exist a pair of points $W^{i}\in \partial V,\,i\in\{1,2\}$ belonging to $\Xi_a$ for all $a \in \mathring K$. Then, using Fatou coordinates (denoted by $\phi_a$) sending the repelling direction to the real axis and depending holomorphically on the parameter $a$, we can define $\gamma_{a,i}$ to be the pre-image under Fatou coordinates
of the horizontal straight line between the image of $W^i$ under Fatou coordinates and $-\infty$. By construction, $\gamma_{a,i},\,i\in\{1,2\}$ are forward invariant under $\F_a$, the path $\gamma_a:= \gamma_{a,1}\cup \gamma_{a,2}$ is connected and, since $\phi_a(W^1)$ and $\phi_a(W^2)$ move holomorphically with $a$, the whole of $\gamma_a$ also moves holomorphically with $a$.\\

 \begin{figure}
\centering
\psfrag{a}{\small $\alpha$}
\psfrag{t}{\small $\theta$}
\psfrag{g}{\small $\beta$}
\psfrag{V}{\small $V$}
\psfrag{w}{\small $w_{a,+}$}
\psfrag{c}{\small $w_{a,-}$}
	\includegraphics[width= 8cm]{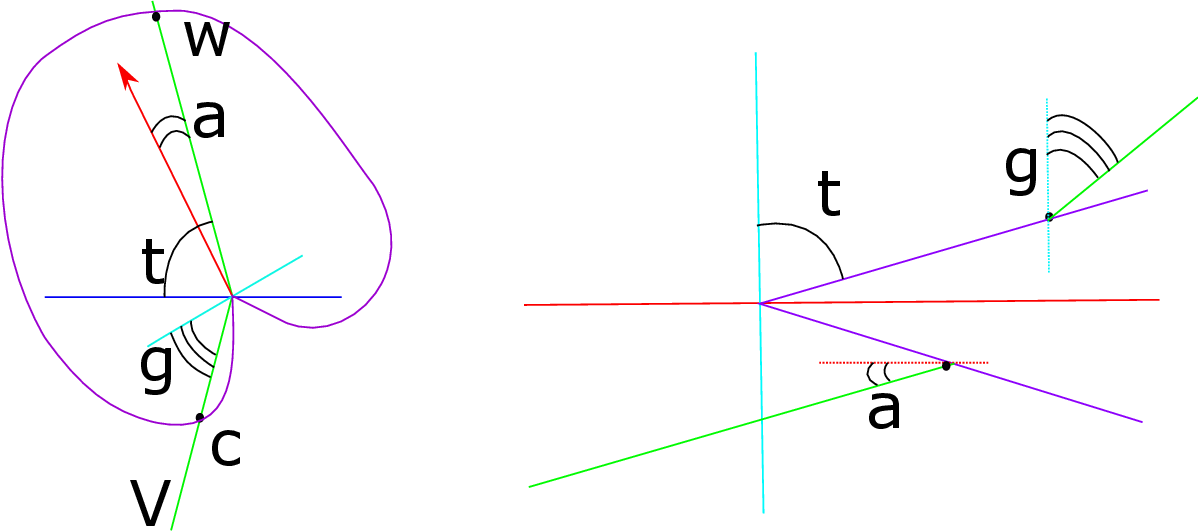}
\caption{\small For $a \in \L\f$, the repelling direction is contained in $V$, and so
 $\psi_a([w_{a,i},P)),\,i\in\{1,2\}$ (represented as the green half lines on the right) are not parallel to the real axis}\label{tr}
\end{figure}

We proceed to the details. 
As remarked in Section \ref{dynlune}, in $\zeta=Z-1$ coordinates, when $a\ne 7$, 
the power series expansion of the branch of $\F_a$ which fixes $\zeta=0$ has the form:
$$\zeta \to \zeta + \frac{a-7}{3(a-1)}\zeta^2 + \ldots.$$
Since the change of coordinates from $\zeta=Z-1$ to $z'=(a-1)(Z-1)/(a-Z)$ has derivative equal to $+1$ at $\zeta=0$, 
the power series expansion in $z'$ of the branch of $\F_a$ fixing $P_a$ has the same coefficients in its terms of degree $\le 2$ as those in
the $\zeta$ coordinate, so this power series has the form:
$$z' \to z' + \frac{a-7}{3(a-1)}z'^2 + \ldots$$
Let $\psi_a(\zeta)=-1/(b(a)\zeta)$, where $b(a)= \frac{a-7}{3(a-1)}$, be pre-Fatou coordinates, and define the map
$$F_a= \psi_a \circ \F_a \circ \psi_a^{-1}.$$ Then a computation shows that
$$F_a(w)=w+1+d(a)/w+O(1/w^2)$$
for a constant $d(a)$ (see Section 2.1.2 in \cite{Sh}). 
Let $c\geq 2$ be sufficiently large that, setting $\tau:= \arctan(1/c)$, we have $(\pi/2-\hat\theta)/2<\tau< \pi/2-\hat\theta$ 
(where $2\hat \theta$ is the angle of $V$ at $P$), and let $R=R(a)$ be sufficiently large that whenever
f$|w|>R/(2c)$ we have $|F_a(w)-(w+1)|<1/(2c)$. Now, setting and so
defining 
$$X_a:= \{w \in \C\ :\  Re(w)<  c|Im(w)|-R\},$$ $F_a$ is defined and injective on $X_a$.
Recalling that $K=\overline \L\f'\setminus {\mathring N}$ for a small neighbourhood of $a=7$, let
$\widehat R= max_{a \in K}R(a)$  and define 
$$X:= \{w \in \C \ :\ Re(w)<  c|Im(w)|-\widehat{R}\}.$$ Then the set  $\Xi_a:= \psi_a^{-1}(X)$ is a repelling petal for $\F_a$ at $P$,
that moves holomorphically with $a\in \mathring K$, and that contains a neighbourhood of $P$ in $\partial V$ for all $a \in \mathring K$. 
Hence, for every $a\neq 7$, there exist $w_{a,i} \in \partial V,\,i\in\{1,2\}$ such that $\psi_a([w_{a,i},P))\in X,\,i\in\{1,2\}$, and since 
the repelling direction is contained in $V$,
for $w_{a,i}$ sufficiently close to $P$ the half-lines $\psi_a([w_{a,i},P))$ are nowhere parallel to the real axis (see Figure \ref{tr}). 
By the compactness of $K$, there exists a pair of points $W^i \in \partial V,\,i\in\{1,2\}$, one on each side of $P$, which lie in $\Xi_a$ for all $a \in K$ and 
have the property that the half-lines $\psi_a([W^i,P)),\,i\in\{1,2\}$ are nowhere parallel to the real axis.
 
Now let $\phi_a : \Xi_a \rightarrow \hat X:=\phi_a(\Xi_a) \subset \C$ be Fatou coordinates depending holomorphically on the parameter $a$ and sending the repelling direction to the real axis (this is, $\phi_a= \Phi_a\circ \psi_a$, where $\Phi(w)=w-d(a)+c+O(1/w^2)$ and $c$ is a constant, see Proposition 2.2.1 in \cite{Sh}). Thus in particular $W_{a,i}:=\phi_{a}(W^{i}),\,i\in\{1,2\}$ move holomorphically. 
Let $\hat \gamma_{a,i},\,i\in\{1,2\}$ denote the horizontal straight lines between $W_{a,i}$ and $-\infty$. Then $\hat \gamma_{a,i},\,i\in\{1,2\}$ are contained in $\hat X$, are forward invariant, are transversal to $\phi_a(\partial V)$, and move holomorphically with $a \in \mathring K$. Setting $\gamma_{a,i}:= \phi_a^{-1}(\hat \gamma_{a,i}),\,i\in\{1,2\}$ and $\gamma_a:= \gamma_{a,1}\cup \gamma_{a,2}$, 
the family of $\gamma_a$ for $a\in \mathring K$  has the desired properties.
\end{proof}

\subsection{Holomorphic motions}\label{holom}
The curves $\gamma_{a,i},\,i=\{1,2\}$ divide $V'_a$ into three topological discs. Two of these
are in the 
repelling petal $\Xi_a$ for $\F_a$ at $P_a$. The third contains the critical point $c_a$ and the limit set $\Lambda_{a,-}$ of $\F_a$, and it will play a central role in our surgery: we call this set
the \textit{central set} (Figure \ref{hm}), and we denote it by $O_a.$
Later, we will define a corresponding set $O_h$ for the Blaschke product $h$; our surgery construction will glue $\F_a$ on $O_a$ to $h$ 
on $\widehat \C \setminus O_h$.
We define the fundamental pinched annulus to be
$$\P\A_a:= V_a \setminus O_a.$$
Then $\P\A_a$ does not contain the critical point, nor does it intersect the backward limit set.
Recall that the set $\K\subset \L\f'$ is a topological disc, and that by Lemma \ref{hollunes} 
for $a \in \mathcal{L}_{\theta}'$
both $\partial V_a$ and $\partial V_a'$ move holomorphically. Since their intersection is precisely the parabolic fixed point, 
their union also moves holomorphically.
By Proposition \ref{divarcs}, the curve $\gamma_a:= \gamma_{a,1}\cup \{P_a\}\cup \gamma_{a,2}$ moves holomorphically for $a\in \mathring K\subset \mathcal{L}_{\theta}'$.
Hence, choosing a base point $a_0$ in the set  $\mathring \m \setminus N \subset \K$, we can define the holomorphic motion 
 $$\tau: \K\times (\partial(V_{a_0}) \cup \partial(V_{a_0}')\cup \gamma_{a_0}) \rightarrow \C$$
 holomorphic in $a$ (fixing $z$) and quasiconformal in $z$ (fixing $a$), such that
 $$\tau_{a}(\partial(V_{a_0}) \cup \partial(V'_{a_0})\cup \gamma_{a_0})=\partial(V_{a}) \cup \partial(V'_{a})\cup \gamma_{a}.$$

 Since $\K$ is conformally homeomorphic to a disc, by Slodkowski's  Theorem 
 the motion $\tau$ 
 extends to a holomorphic 
 motion of the complex plane:
  $$\overline\tau: \K \times \C \rightarrow \C,$$
     which we can then restrict to a holomorphic motion of the exterior of the central set $O_a$:
    $$\widetilde\tau: \mathring K  \times \widehat \C \setminus O_{a_0}\rightarrow\widehat \C,$$
and by construction $\widetilde\tau_a(\P\A_{a_0})=\P\A_a$.
   

\begin{figure}
\centering
\psfrag{a}{\small $\mathcal{C}_{a_0}$}
\psfrag{b}{\small $\mathcal{C}_a$}
\psfrag{c}{\small $\mathcal{C}_{a_0}$}
\psfrag{d}{\small $\mathcal{C}_a$}

\psfrag{a}{\small $\mathcal{C}_{a_0}$}
\psfrag{g}{\small $\F_a$}
\psfrag{f}{\small $\F_{a_0}$}
\psfrag{t}{\small $\tau_a$}
	\includegraphics[width= 8cm]{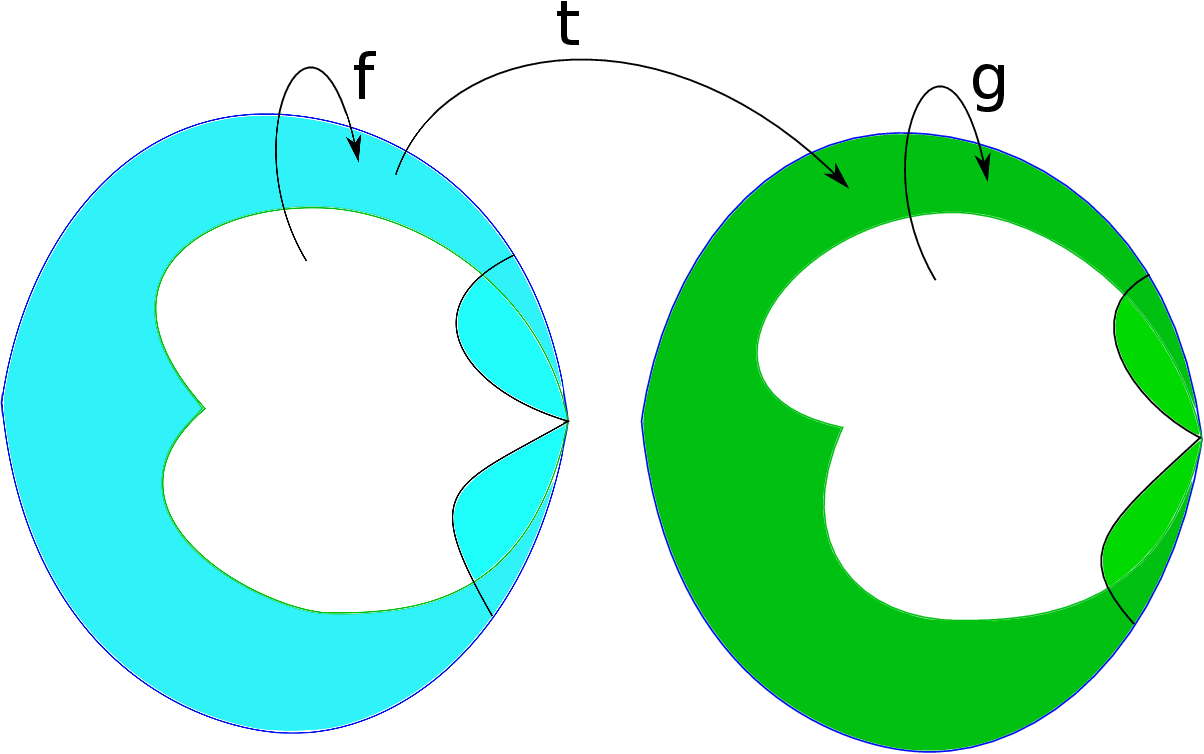}
\caption{\small Holomorphic motion $\tau_a$ between the fundamental pinched annulus $\P\mathcal{A}_{a_0}$ for $\F_{a_0}$ (in blue) and the fundamental pinched annulus $\P\mathcal{A}_a$ for $\F_a$ (in green).
The interior white region we call the {\it central set} $O_a$.}
\label{hm}
\end{figure}

\subsection{Surgery construction}\label{reviewsurgery}
We showed in \cite{BL1} that for every $a$ in the Klein combination locus $\mathcal{K}$, the correspondence
$\F_a$ can be modified by surgery to become a parabolic-like map, with the consequence that $\F_a$ is hybrid equivalent
to a member of $Per_1(1)$ for every $a\in \mathcal{K}$, a unique such member if $\Lambda_-(\F_a)$ is connected.

We now perform the surgery in a different fashion, going directly from the correspondences $\F_a$ to rational maps in $Per_1(1)$, 
by adapting the methods of \cite{L1}.  
The map 
$$h(z)=\frac{z^2+1/3}{1+z^2/3}$$ is an external map for $P_A(z)=z+1/z+A$ for all $A\in \C$ (\cite{L1}), and the idea of the surgery is to glue this external map
acting on $\widehat \C \setminus \overline \D$ onto 
$\widehat \C \setminus \Lambda_{a,-}$ by quasiconformal surgery. In order to do this we first fix $a_0 \in \mathring \m$, we uniformize the complement of $\Lambda_{a_0}$ to the complement 
of the closed unit disc and we conjugate $\F_{a_0}$ with the uniformization, constructing an `external map' for $\F_{a_0}$. We use this external map to construct a quasiconformal map $f$ from
$\widehat \C \setminus O_{a_0}$ to the corresponding set for $h$, which we use to convert $\F_{a_0}$ into a member of $Per_1(1)$. 
Finally, composing with the holomorphic motion of $\widehat \C \setminus O_{a}$ moves this to a surgery on $\F_a$.
\begin{figure}
\centering
\psfrag{g}{\small $g:=\alpha\circ\F_{a_0}\circ\alpha^{-1}$}
\psfrag{b}{\small $B_{a,1}$}
\psfrag{c}{\small $B_{a,2}$}
\psfrag{s}{\small $\mathcal{S}_{a,1}$}
\psfrag{R}{\small $\alpha$}
\psfrag{t}{\small $\mathcal{S}_{a,2}$}
\psfrag{j}{\small $\gamma_{g,1}=\alpha(\gamma_{a_0,1})$}
\psfrag{v}{\small $\gamma_{g,2}$}
\psfrag{h}{\small $h$}
\psfrag{z}{\small $\gamma_{h,1}=\phi_h^{-1}\circ \phi_g(\gamma_{g,1})$}
\psfrag{n}{\small $\gamma_{h,2}=\phi_h^{-1}\circ \phi_g(\gamma_{g,2})$}
\psfrag{p}{\small $\psi_1$}
\psfrag{k}{\small $\psi_2$}
\psfrag{f}{\small $\F_{a_0}$}
\psfrag{x}{\small $\gamma_{a_0,1}$}
\psfrag{d}{\small $\gamma_{a_0,2}$}
	\includegraphics[width= 7.5cm]{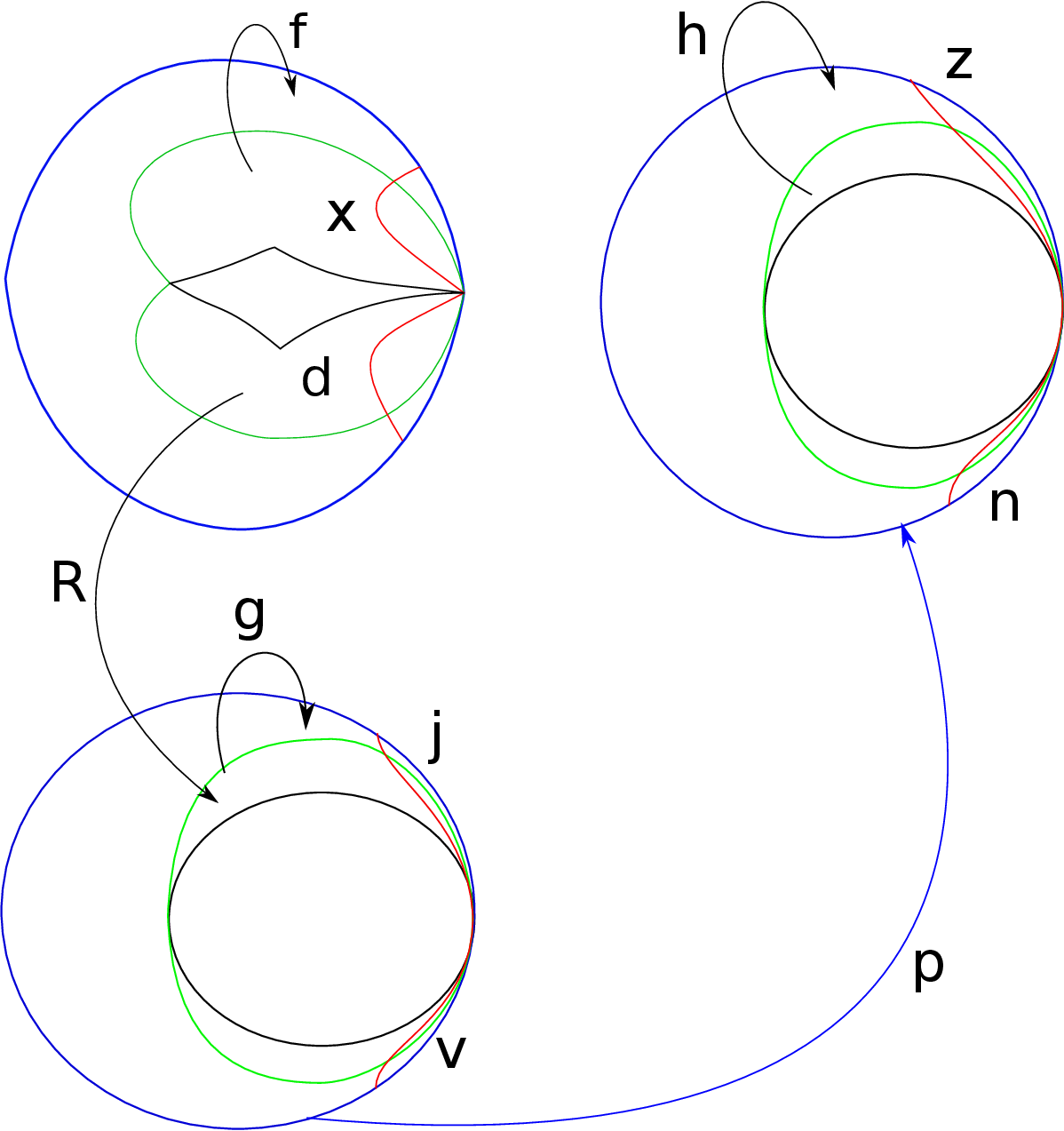}
\caption{\small Upper left, we have $\F_a: V_a' \rightarrow V_a$ and the dividing arcs $\gamma_{a,i},\,i=\{1,2\}$. Below, we have $g: V_g'\rightarrow V_g$ and its dividing arcs  $\gamma_{g,i}=\alpha(\gamma_{a_0,i})$. On the right, $h: V_h'\rightarrow V_h$ and its dividing arcs $\gamma_{h,i}:= \phi_h^{-1}\circ \phi_g(\gamma_{g,i})$.}
\label{domainh}
\end{figure}

\subsubsection{An `external map' $g$ for $\F_{a_0}$}\label{exmap}
  Let $a_0 \in \mathring \m$, and let $\alpha: \widehat \C \setminus \Lambda_{a_0,-} \rightarrow \widehat \C \setminus \overline \D$ be the Riemann map, normalized by $\alpha(\infty)=\infty$ and $\alpha(\gamma_{a_0}(t))\rightarrow 1$ as $t\rightarrow 0$. Define $\widehat V_g:=\alpha(V\setminus \Lambda_{a_0,-})$, and $\widehat V'_g:=\alpha(V'_{a_0}\setminus \Lambda_{a_0,-})$ (see Figure \ref{domainh}). Then the map 
$$\widehat g:= \alpha\circ \F_{a_0}\circ \alpha^{-1}: \widehat V'_g \rightarrow \widehat V_g,$$ 
is a holomorphic degree $2$ covering by construction.
Let $\tau(z)= 1/\bar{z}$ be the reflection with respect to the unit circle, and define the sets $\check V_g:=\tau(\widehat V_g)$,
$\check V'_g:=\tau(\widehat V'_g)$, $V_g:=\widehat V_g \cup \S^1 \cup \check V_g$ and 
$V'_g:=\widehat V'_g \cup \S^1 \cup \check V'_g$.
Applying the strong reflection principle with respect to $\S^1$ we can analytically extend the map  $\widehat g: \widehat V'_g
\rightarrow \widehat V_g$ to $$g: V'_g \rightarrow V_g.$$
By construction, $g$ has a parabolic fixed point 
at $z=1$ 
with two repelling petals $\Xi_{g,1},\,\Xi_{g,2}$, which intersect the unit circle $\S^1$. 
Define $\gamma_{g,i}:=\alpha(\gamma_{a_0,i})$, for $i=1,2$. Then by construction $\gamma_{g,i}\in \Xi_{g,i},\,\,i=1,2$. 
Let $\phi_{g,1},\,\phi_{g,2}$ be repelling Fatou coordinates defined in $ \Xi_{g,1},\Xi_{g,2}$ respectively, with axis tangent 
to the unit circle at the parabolic fixed point.
Define $\gamma_g:=\gamma_{g,1}\cup\{1\}\cup\gamma_{g,2}$, and $O_g:= V_g \setminus \alpha(\P\A_{a_0})$. Then $\partial O_g:= \alpha(\partial O_{a_0})\cup \{1\}$ is a quasicircle, as it is a piecewise $C^1$-curve with no zero angle. 

\subsubsection{Gluing $\F_a$ on $O_{a_0}$ to $h$ on $\widehat{\C}\setminus O_h$}\label{s1}
Choose $\epsilon>0$, let $V_h:= \D(-\epsilon,1+\epsilon)$, and $V'_h:=h^{-1}(V_h)$. 
The map $h$ has two repelling petals $\Xi_{h,i},\,\,i=1,2$ which intersect the unit circle; let $\phi_{h,1},\,\,\phi_{h,2}$ be repelling 
Fatou coordinates defined on $ \Xi_{h,1},\Xi_{h,2}$ respectively and with axis tangent to the unit circle at the parabolic fixed point.
On $\overline V_h$ define the arcs $\gamma_{h,i}:= \phi_{h,i}^{-1}\circ \phi_{g,i}(\gamma_{g,i})$, and 
$\gamma_h:=\gamma_{h,1}\cup\{1\}\cup \gamma_{h,2}$.
Let $\phi=\phi_h^{-1}\circ \phi_g: \gamma_g \rightarrow \gamma_h$ be
 $$\phi :=\left\{
\begin{array}{cl}
\phi_{h,1}^{-1}\circ \phi_{g,1} &\mbox{on  } \gamma_{g,1}\\
\phi_{h,2}^{-1}\circ \phi_{g,2} &\mbox{on  } \gamma_{g,2}\\
\end{array}\right.$$
Then $\phi$ is a conjugacy between $g|\gamma_g$ and $h|\gamma_h$. This conjugacy is quasisymmetric by Proposition \ref{arcs}, which we delay until the end of 
this subsection to avoid interrupting the construction of the surgery.

\begin{remark} Other choices are possible for $V_h$ (see Proposition \ref{same}).
\end{remark}

Let $z_{g,i}:=\partial V_g \cap \gamma_{g,i},\,i=\{1,2\}$ and $z_{h,i}:=\partial V_h \cap \gamma_{h,i},\,i=\{1,2\}$.
Let $\psi_1: \partial V_g \rightarrow \partial V_h$ be a diffeomorphism sending the points $z_{g,1}$ and $z_{g,2}$ to $z_{h,1}$ and $z_{h,2}$ respectively,
and let $\psi_2:\partial V'_{g} \rightarrow \partial V_h'$ be a lift of 
$\psi_1$ to double covers (so that $\psi_1\circ g=h\circ \psi_2$).
Note that $\gamma_{g}$ divides $V_g \setminus V'_g$ into three connected components, two of which have the parabolic fixed point $1$ on the boundary. Call the third one $Q_g$ (so $\partial Q \cap \{1\} = \emptyset$). Similarly, let $Q_h$ be the connected component of $(V_h \setminus V'_h)\setminus \gamma_h$ with $\partial Q_h \cap \{1\} = \emptyset$.
 Then both $\partial Q_{g}$ and  $\partial Q_{h}$  are quasicircles, as they are piecewise $C^1$ curves without zero angles, so that the quasisymmetric map
 $$\Phi_Q: \partial Q_{g} \rightarrow \partial Q_h$$ defined as:
 
 $$\Phi_{Q} :=\left\{
\begin{array}{cl}
\psi_1 &\mbox{on  } \partial V_g\\
\psi_2 &\mbox{on  } \partial V'_{g} \\
\phi_h^{-1}\circ \phi_g\ &\mbox{on  } \gamma_{g,i},i=[1,2] \cap \partial Q_g\\
\end{array}\right.$$
extends to a quasiconformal map $$\Psi_{Q}: \overline Q_g \rightarrow \overline Q_h.$$

The arc $\gamma_{h}$ divides $V'_h$ into three connected components: the set $O_h$ is the one containing the unit disc.
Define $U_g:= O_g \cup Q_g$, and similarly $U_h:= O_h \cup Q_h$.
As both $\partial U_{g}$ and  $\partial U_{h}$  are piecewise $C^1$ curves without zero angles, 
the quasisymmetric map
$\Phi_{U}: \partial U_{h} \rightarrow \partial U_h$ defined as:
  $$\Phi_{U} :=\left\{
\begin{array}{cl}
\psi_1 &\mbox{on  } \partial V_g\\
\phi_h^{-1}\circ \phi_g\ &\mbox{on  } \gamma_{g,i},i=[1,2] \\
\end{array}\right.$$
extends to a quasiconformal map $\Psi_{U^c}: \widehat \C \setminus U_g \rightarrow \widehat \C \setminus U_h.$
Define the quasiconformal map $\Psi_{O^c}: \widehat \C \setminus O_{g} \rightarrow \widehat \C \setminus O_{h}$ to be
$$\Psi_{O^c} :=\left\{
\begin{array}{cl}
\Psi_Q &\mbox{on  } Q_g\\
\Psi_{U^c} &\mbox{on  } \widehat \C \setminus U_g. \\
\end{array}\right.$$

We can now define the quasiconformal map $f:\widehat \C \setminus O_{a_0} \rightarrow \widehat \C \setminus O_h$ to be
$$f:= \Psi_{O^c} \circ \alpha.$$

Putting everything together, define $F_{a_0}:\widehat \C \rightarrow \widehat \C$ to be
 
    $$F_{a_0}:=\left\{
\begin{array}{cl}
\F_{a_0}&\mbox{on  } O_{a_0}\\
f^{-1}\circ h\circ f &\mbox{on  }  \widehat \C \setminus O_{a_0}.\\
\end{array}\right.$$
Then $F_{a_0}$ is continuous, so quasiregular. On  $\widehat \C \setminus O_{a_0}$ define the Beltrami form 
$$\hat\mu_{a_0}:=f^*(\mu_0),$$ 
and on $\widehat \C$ the Beltrami form

 $$ \mu_{a_0}:=\left\{
\begin{array}{cl}
\hat\mu_{a_0} &\mbox{on  }\widehat \C \setminus O_{a_0}\\
(\F_{a_0}^n)^*({ \hat\mu_{a_0}})&\mbox{on  } \F_{a_0}^{-n}(\mathcal{C}_{a_0}) \cap O_{a_0}\\
\mu_0 &\mbox{ on } \Lambda_{a_0,-}\\
\end{array}\right.$$
and note that, by construction, $\mu_{a_0}$ is $F_{a_0}$-invariant and has $|\mu_{a_0}|_{\infty}<1$.\
 
It follows by the Measurable Riemann Mapping Theorem that there exists a quasiconformal homeomorphism 
$ \phi_{a_0}: \widehat \C \rightarrow\widehat \C$, integrating $\mu_{a_0}$, and sending the parabolic fixed point $P_a$ to $\infty$, its preimage $S_a$ to $0$, and the critical point $c_{a_0}$ to $-1$.
Define  $\widetilde F_{a_0}$ to be the holomorphic map of degree two 
$$\widetilde F_{a_0}:=\phi_{a_0} \circ F_{a_0} \circ \phi_{a_0}^{-1}\,: \widehat \C \rightarrow \widehat \C,$$
and observe that $\widetilde F_{a_0}$ is a quadratic rational map, with a parabolic fixed point at $\infty$ having multiplier $1$, 
with the other preimage of this fixed point at $0$, and with a critical point at $-1$,
concluding our surgery construction converting
the correspondence $\F_{a_0}$ into an element of $Per_1(1)$.\\

As promised in the first paragraph of this subsection, we now present a proof that the map $\phi:\gamma_g \to \gamma_h$ defined there
is quasisymmetric.
This employs a similar argument to that in Proposition 5.3 (ii) of \cite{L1}: we give details for the convenience of the reader.

\begin{prop}\label{arcs}
The map  $\phi:=\phi_h^{-1}\circ \phi_g: \gamma_g \rightarrow \gamma_h$ defined as 

 $$\phi =\left\{
\begin{array}{cl}
\phi_{h,1}^{-1}\circ \phi_{g,1} &\mbox{on  } \gamma_{g,1}\\
\phi_{h,2}^{-1}\circ \phi_{g,2} &\mbox{on  } \gamma_{g,2}\\
\end{array}\right.$$
is a quasisymmetric conjugacy between $g|\gamma_g$ and $h|\gamma_h$. 
\end{prop}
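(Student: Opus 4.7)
The conjugation property is automatic from the definition: on $\gamma_{g,i}$ one has $\phi_{g,i}\circ g=\phi_{g,i}+1$ and on $\gamma_{h,i}$ one has $\phi_{h,i}\circ h=\phi_{h,i}+1$, so $\phi\circ g = \phi_{h,i}^{-1}\circ(\phi_{g,i}+1) = h\circ \phi$ on $\gamma_{g,i}$, and $\phi(1)=1$ since both Fatou coordinates send $z=1$ to $\infty$. The content of the proposition is therefore the quasisymmetric statement.

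Away from the parabolic fixed point the quasisymmetry is immediate: $\phi_{g,i}$ and $\phi_{h,i}$ are univalent on open neighbourhoods of $\gamma_{g,i}\setminus\{1\}$ and $\gamma_{h,i}\setminus\{1\}$ in their respective repelling petals, so $\phi_{h,i}^{-1}\circ\phi_{g,i}$ is conformal, hence locally quasisymmetric, on $\gamma_{g,i}\setminus\{1\}$. Because the arcs are forward invariant under the respective dynamics and $\phi$ is a conjugacy, a quasisymmetric bound on any compact fundamental segment $I\subset\gamma_{g,i}$ (with $g(I)\cap\mathring I$ at most a single endpoint) propagates by iteration to a uniform bound on all of $\gamma_{g,i}$ outside any prescribed neighbourhood of $z=1$.

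I expect the main obstacle to be quasisymmetry at $z=1$ itself, where the Fatou coordinates blow up. To handle it I would exploit that $g$ and $h$ share the same parabolic multiplicity $3$ at $z=1$, so in the local coordinate $\zeta=z-1$ each has the normal form $\zeta\mapsto\zeta+A_{\ast}\zeta^{3}+O(\zeta^{4})$ and each repelling Fatou coordinate admits the standard asymptotic expansion
\[
\phi_{\ast,i}(\zeta)\;=\;-\frac{1}{2A_{\ast}\,\zeta^{2}}\;+\;\beta_{\ast}\log\zeta\;+\;C_{\ast,i}+o(1)\qquad(\zeta\to 0)
\]
in the petal $\Xi_{\ast,i}$, where $\ast\in\{g,h\}$. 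Solving $\phi_{h,i}(\zeta_{h})=\phi_{g,i}(\zeta_{g})$ to leading order yields $\zeta_{h}=\sqrt{A_{g}/A_{h}}\,\zeta_{g}(1+o(1))$, with the branch of the square root determined by the petal, so $\phi_{h,i}^{-1}\circ\phi_{g,i}$ extends continuously to $z=1$ with asymptotically linear (hence bi-Lipschitz) behaviour. Restricted to $\gamma_{g,i}$, which approaches $z=1$ tangent to one of the repelling directions of $g$, this gives the required quasisymmetric estimate near $z=1$. Matching the two branches uses only that $\gamma_{g,1}$ and $\gamma_{g,2}$ abut $z=1$ along the two distinct repelling directions of $g$, paired under $\phi$ with the two repelling directions of $h$, so the two local estimates glue across the junction. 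Combined with the previous paragraph, this proves global quasisymmetry of $\phi:\gamma_{g}\to\gamma_{h}$.
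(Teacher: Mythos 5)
Your plan follows the same core strategy as the paper's proof: the conjugacy property and quasisymmetry away from $z=1$ are disposed of by conformality of the Fatou coordinate changes (plus the quasiarc property of the curves), and everything is reduced to the behaviour at the parabolic point, controlled through the standard expansion of repelling Fatou coordinates for a multiplicity-$3$ parabolic germ (Shishikura's normal form, $I(z)=-1/(2az^{2})$ and $\Phi(w)=w-\hat a\log w+c+o(1)$). Where you genuinely differ is in how the two branches are matched across the junction. The paper (following Proposition 5.3(ii) of \cite{L1}) never computes the asymptotics of the composite: it writes $\phi$ near the cusp as $\widetilde{I_h}^{-1}\circ\widetilde\Phi_h^{-1}\circ\widetilde\Phi_g\circ\widetilde{I_g}$, where $\widetilde{I_g}$ equals $I_g$ on one branch and $-I_g$ on the other (a fold turning the two-sided arc through the cusp into a single arc through $\infty$, on which the map is quasisymmetric), and $\widetilde\Phi_g,\widetilde\Phi_h$ are diffeomorphisms near $\infty$ because $\Phi'=1+o(1)$; quasisymmetry of $\phi$ is then inherited from the factors. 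You instead solve $\phi_{h,i}(\zeta_h)=\phi_{g,i}(\zeta_g)$ and conclude $\zeta_h=\sqrt{A_g/A_h}\,\zeta_g(1+o(1))$, with the same branch of the root on the two sides (your pairing of the repelling directions is the correct reason for this), i.e.\ $\phi$ is asymptotically linear at the cusp. That buys a more explicit and arguably more transparent statement, but at the cost of one estimate you elide: asymptotic linearity of the \emph{values} alone does not yield bi-Lipschitz or quasisymmetric control for triples whose diameter is small compared with their distance to $z=1$; for those you need either the derivative asymptotics $\phi'(\zeta)=\lambda+o(1)$ along the arcs (which follows from the same expansions, or from Koebe distortion applied in Fatou coordinates, and is also what makes your propagation-by-iteration bound uniform as the excised neighbourhood of $z=1$ shrinks), or the paper's factorization, which supplies this uniformity automatically because each factor is a global quasisymmetric map. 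With that derivative (or uniform distortion) estimate made explicit, your argument is complete and amounts to a direct-computation variant of the paper's proof.
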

\begin{proof}

It is clear that $\phi$ is a conjugacy and that it is quasisymmetric on each of the two halves $\gamma_{g,i}$ $i=1,2$, of $\gamma_g$, since $\phi_{g,i}$ and 
$\phi_{h,i}$ are Fatou coordinates for a repelling petal of $g$ and a repelling petal of $h$ respectively, hence they are diffeomorphisms, and since the curves $\gamma_{g,i}$, and  $\gamma_{h,i}$ $i=1,2$, are quasiarcs (arcs of quasicircles). So it just remains to check that $\phi$ is quasisymmetric at the parabolic point, where the curves $\gamma_{g,i}$ $i=1,2$, join; we will show this by looking at the asymptotics of Fatou coordinates at the parabolic point, following the proof of Proposition 5.3(ii) in \cite{L1}.

Let $T(z)=z+1$, then $\widehat g(z):=T\circ g\circ T^{-1}(z)=z+ az^3+h.o.t.$ for some $0\ne a\in \C$.
Let $\widehat \Xi_{g,i}:=T^{-1}(\Xi_{g,i})$ be repelling petals for $\widehat g$, set $\widehat \gamma_{g,i}:=T^{-1}(\gamma_{g,i})$, and $\widehat \gamma_g:=\widehat\gamma_{g,1}\cup \{0\}\cup \widehat\gamma_{g,2}$. Let $\widehat \phi_{g,i}: \widehat\Xi_{g,i}\rightarrow \H_l$ be repelling Fatou coordinates for $\widehat g$ with axis tangent to the imaginary axis at the parabolic fixed point. 
Then $$\widehat\phi_{g,i}= \Phi_{g,i} \circ I_g,$$ where
 $I_g(z):= -\frac{1}{2az^{2}}$ 
 conjugates $\widehat g$ to the map $\widehat G(w)= w+1+\frac{\hat a}{w}+O(\frac{1}{w^2})$ on $\widehat\Xi_{g, i},i=1,2$, 
 and the map $\Phi_{g, i}(w) \,=\, w\, - \,\hat{a}\log(w)\,
 + \,c_{i} \,+ \,o(1)$ conjugates $\widehat G(w)$ to the map $T(z)=z+1$ on $I_g(\widehat\Xi_{g, i}),i=1,2$,
 (see Proposition 2.2.1 in \cite{Sh}). 
Define $\Gamma_{g,1}:=I_g(\widehat \gamma_{g,1})$,  $\Gamma_{g,2}:=-I_g(\widehat \gamma_{g,2})$ (see Figure \ref{da}), and 
$$\Gamma_g:=\Gamma_{g,1}\cup\{\infty\}\cup\Gamma_{g,2}.$$ 
\begin{figure}
\centering
\psfrag{g}{\small $\widehat g$}
\psfrag{p}{\small $0$}
\psfrag{h}{\small $\widehat h$}
\psfrag{1}{\small $\widehat \gamma_g$}
\psfrag{2}{\small $\widetilde I_g$}
\psfrag{3}{\small $\Gamma_g$}
\psfrag{0}{\small $\infty$}
\psfrag{4}{\small $\widehat \Phi_g$}
\psfrag{5}{\small $\gamma$}
\psfrag{6}{\small $\widehat \Phi_h$}
\psfrag{7}{\small $\Gamma_h$}
\psfrag{8}{\small $\widetilde I_h$}
\psfrag{9}{\small $\widehat \gamma_h$}
	\includegraphics[width= 8cm]{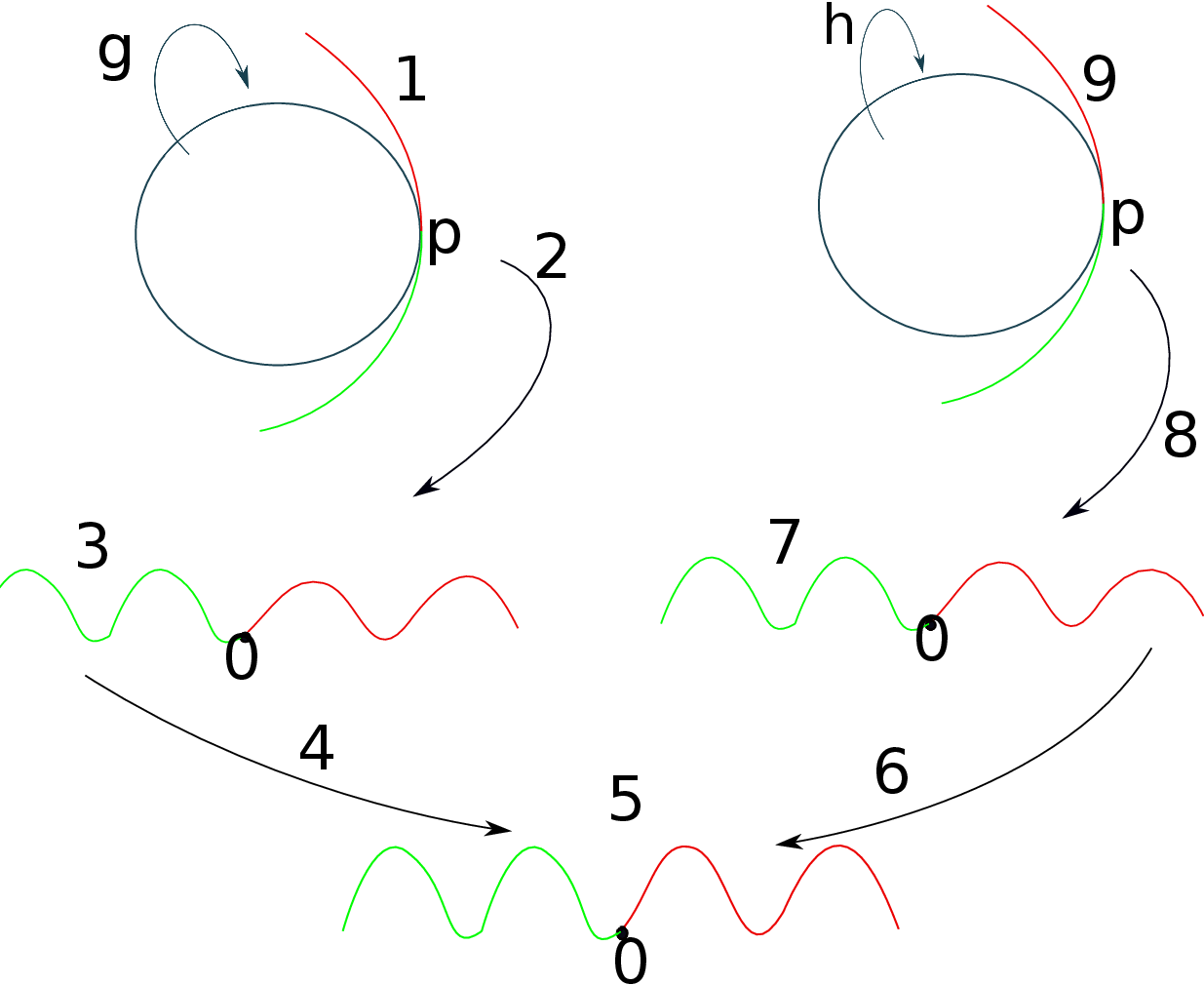}
\caption{\small Construction of the map $\widehat\phi_h^{-1}\circ\widehat\phi_{g}=T\circ \phi_{h}^{-1}\circ \phi_{g}\circ T^{-1}=\widetilde{I_h}^{-1} \circ \widetilde\Phi_h^{-1}\circ \widetilde\Phi_g \circ\widetilde{I_g} : \widehat\gamma_{g} \rightarrow
\widehat\gamma_h$.}
\label{da}
\end{figure}
The map
$\widetilde I_g: \widehat \gamma_g \rightarrow \Gamma_g$ (see Figure \ref{da}, top right), defined as:
$$ \widetilde{I_g}(z)=\left\{
\begin{array}{cl}
I_g(z) &\mbox{on  } \widehat\gamma_{g,1}\\
-I_g(z) &\mbox{on  } \widehat\gamma_{g,2}\\
\end{array}\right.
$$  
is quasisymmetric on a neighborhood of $0$. 
Set $\gamma_{1}:=(\Phi_{g,1} \circ I_g)(\widehat \gamma_{g,1}),$ $\gamma_{2}:=(\Phi_{g,2} \circ I_g)(\widehat\gamma_{g,2})$ and $\gamma:=\gamma_1\cup\{\infty\}\cup-\gamma_2$. Define the map  $\widetilde \Phi_g: \Gamma_g \rightarrow \gamma$ as
follows (see Figure \ref{da}, bottom right):
$$\widetilde \Phi_g(w)=\left\{
\begin{array}{cl}
\Phi_{g,1}(w) &\mbox{on  } \Gamma_{g,1}\\
- \Phi_{g,2}(-w) &\mbox{on  } \Gamma_{g,2}\\
\end{array}\right.
$$  
Note that the map $\widetilde \Phi_g$ is conformal on $\Gamma_g \setminus \infty$.
Since $\Phi_{g, i}(w) \,=\, w\, - \,\hat{a}\log(w)\,
 + \,c_{i} \,+ \,o(1)$, the maps
$\Phi_{g,i}$ have derivatives $\Phi'_{g,i}=1+o(1)$ at $\infty$, hence the map $\widetilde \Phi_g: \Gamma_g \rightarrow \gamma$
is a diffeomorphism.

Repeating the process for the map $h$, we can write the map 
$\widehat\phi_h^{-1}\circ\widehat\phi_{g}$ as
$$\widehat\phi_h^{-1}\circ\widehat\phi_{g}=T\circ \phi_{h}^{-1}\circ \phi_{g}\circ T^{-1}=\widetilde{I_h}^{-1} \circ \widetilde\Phi_h^{-1}\circ \widetilde\Phi_g \circ\widetilde{I_g} : \widehat\gamma_{g} \rightarrow
\widehat\gamma_h$$ 
which is quasisymmetric.
 \end{proof}

\subsubsection{Surgery for the whole family}\label{wf}

The surgery so far has been to convert a single correspondence $\F_{a_0}$ into a rational map in $Per_1(1)$. We now
move this holomorphically to make a surgery construction for the whole family $\F_a,\ a\in \K$. 
In Section \ref{holom} we constructed a holomorphic motion 
 $$\widetilde\tau: \mathring K  \times \widehat \C \setminus O_{a_0}\rightarrow\widehat \C,$$
 with restriction $\widetilde \tau_{a_0}(\P\A_{a_0})=\P\A_a$.
  Let
 $f: \widehat \C \setminus O_{a_0}\rightarrow \widehat \C \setminus O_{h}$ be the quasiconformal map we constructed in Section \ref{s1}. We now 
define the map $$T_a:=f\circ\widetilde\tau_a^{-1}: \widehat \C \setminus O_{a}\rightarrow\widehat \C \setminus O_h$$
 that will play the role of a \textbf{tubing} (see Figure \ref{cwf}). \\
     \begin{figure}
\centering
\psfrag{s}{\small $\tau: \widehat \C \setminus O_{a_0}\rightarrow \widehat \C \setminus O_{a}$}
\psfrag{T}{\small $T_a:=f\circ\tau_a^{-1}: \widehat \C \setminus O_{a}\rightarrow\widehat \C \setminus O_h$}
\psfrag{b}{\small is the tubing}
\psfrag{h}{\small $h$}
\psfrag{f}{\small $\F_{a_0}$}
\psfrag{a}{\small $\Phi_{\Delta_g,\Delta_h}$}
\psfrag{d}{\small $\F_a$}
\psfrag{g}{\small $f$}
	\includegraphics[width= 6.3cm]{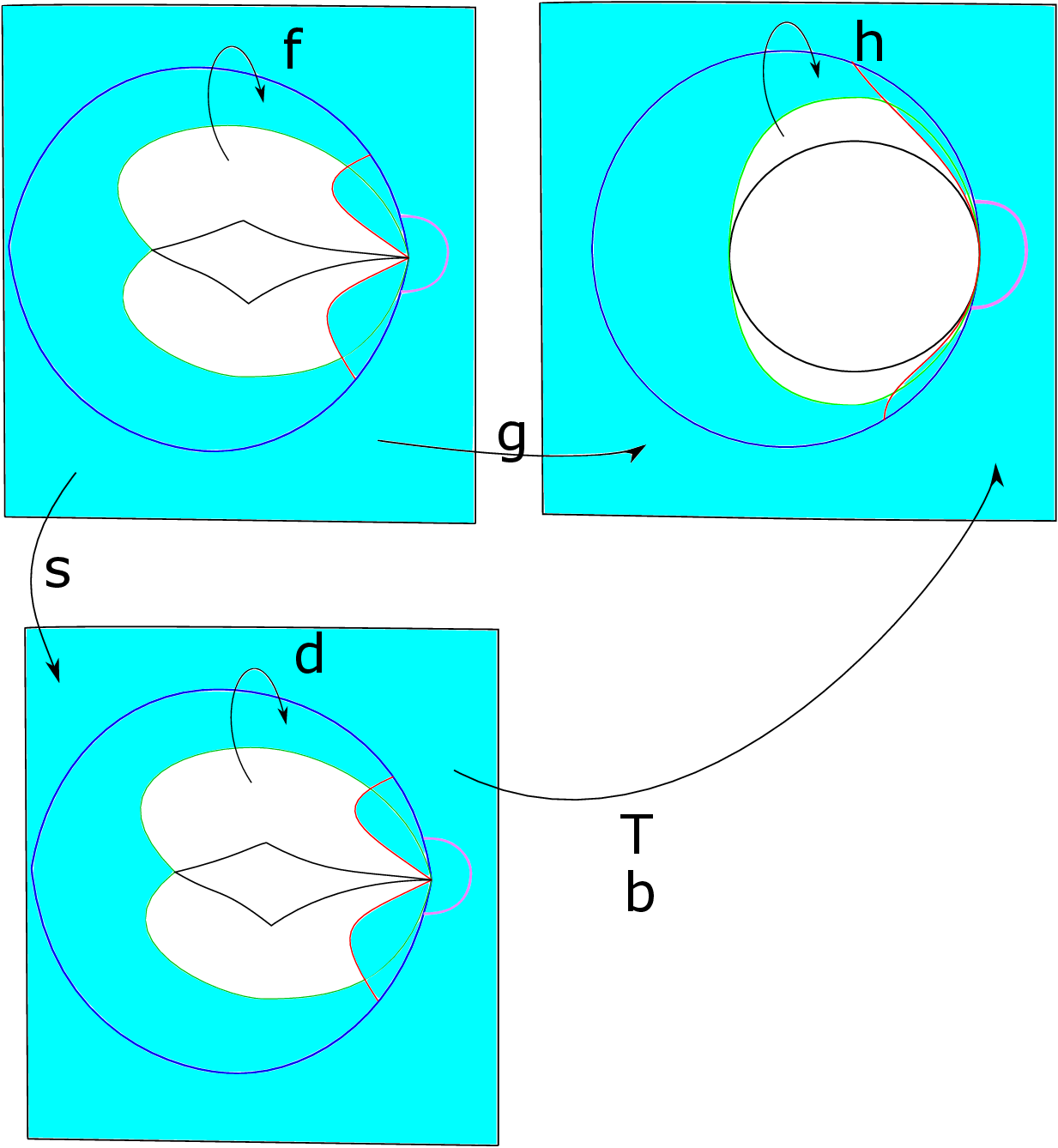}
\caption{\small Construction for the whole family}
\label{cwf}
\end{figure}

For $a \in \mathring K$ define the map $F_a:\widehat \C \rightarrow \widehat \C$ to be
$$ F_{a} :=\left\{
\begin{array}{cl}
\F_{a} &\mbox{on  } O_a \\
(T_a)^{-1}\circ h\circ T_a &\mbox{on  } \widehat \C \setminus O_{a}\\
\end{array}\right.$$
and note that this map is continuous by construction, and hence quasiregular of degree $2$.
Define the Beltrami form $\hat\mu_a=T_a^*(\mu_0) $ on $\widehat \C \setminus O_{a}$, and on $\widehat \C$ define the Beltrami form  
 $$ \mu_{a}: =\left\{
\begin{array}{cl}
\hat \mu_a&\mbox{on  } \widehat \C \setminus O_{a}\\
(F^n)^*(\hat \mu_a) &\mbox{on  } O_{a} \setminus \Lambda_{a,-}\\
\mu_0 &\mbox{on  } \Lambda_{a,-}\\
\end{array}\right.$$
By construction, $\mu_a$ is $F_a$-invariant and $|\mu_a|_{\infty}<1$. 
Hence, by the Measurable Riemann Mapping Theorem, there exists a unique quasiconformal homeomorphism 
$ \phi_{a}: \widehat \C \rightarrow\widehat \C$, integrating $\mu_{a}$, and sending the parabolic fixed point $P_a$ to $\infty$, 
the critical point $c_{a}$ to $+1$, and $T_a^{-1}(\infty)$ to  $-1$.
The map 
$$\widetilde F_{a}:=\phi_{a} \circ F_{a} \circ \phi_{a}^{-1}\,: \widehat \C \rightarrow \widehat \C,$$
is a holomorphic degree $2$ map, and hence a quadratic rational map, with a parabolic fixed point at $\infty$ having multiplier $1$, 
and critical points at $-1$ and $+1$. Thus $\widetilde F_{a}(z)=z+1/z+A$ for some (unique) $A\in \C$.
\begin{remark}\label{dil}
Note that, for each $a \in \K$, the dilatation of the integrating map $\phi_a$ is the same as that of the tubing $T_a$.
Indeed, on $\widehat \C \setminus O_{a}$ we have $\mu_a= T_a^*(\mu_0)$, and on the preimages of $\P\A_a \in O_a$ the Beltrami form $\mu_a$ is obtained by spreading $T_a^*(\mu_0)$ by the dynamics of $\F_a|_{O_a}$ (a holomorphic map, so it does not change the dilatation of $\mu_a$), while on $\Lambda_{a,-}$ we have $\mu_a=\mu_0$. 
\end{remark}

\begin{remark}
By the Measurable Riemann Mapping Theorem, there also exists a unique quasiconformal map $ \psi_{a}: \widehat \C \rightarrow\widehat \C$ integrating $\mu_{a}$ and sending $P$ to $\infty$, 
$c_{a}$ to $-1$ and $T_a^{-1}(\infty)$ to $+1$. If $\phi_{a} \circ F_{a} \circ \phi_{a}^{-1}(z)= z+1/z +A$, for some $A\in \C$, we have that $\psi_{a} \circ F_{a} \circ \psi_{a}^{-1}(z)=z+1/z-A$. 
\end{remark}

\begin{prop}\label{welldefined}
For each $a\in \K \cap \m$ there exists a unique $B \in \mathcal{M}_1$ such that $P_A(z)=z+1/z+A$, where $B=1-A^2$, is hybrid equivalent to $\F_a$ on a doubly pinched neighbourhood of $\Lambda_{a,-}$.
\end{prop}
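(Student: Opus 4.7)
The plan is to separate existence from uniqueness, reading off the former from the surgery construction of Section \ref{wf} and deducing the latter from a rigidity argument on the basin of infinity.

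For existence, the quasiconformal integrating map $\phi_a$ produced in Section \ref{wf} already conjugates $F_a$ to a rational map $\widetilde F_a=P_A\in Per_1(1)$. Since the pinched fundamental annulus $\P\A_a$ is disjoint from $\Lambda_{a,-}$, the backward limit set lies inside the bounded complementary component $O_a$, where $F_a$ coincides with $\F_a$. Thus $\phi_a$ restricts on $O_a$ to a quasiconformal conjugacy between $\F_a$ and $P_A$, carrying $\Lambda_{a,-}$ onto $K_A$. A suitable doubly pinched neighbourhood of $\Lambda_{a,-}$ is the component of $V_a\setminus\gamma_a$ containing the critical point, together with all its $\F_a$-preimages; this is pinched at $P_a$ by each of the two dividing arcs $\gamma_{a,1}$ and $\gamma_{a,2}$. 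By construction $\mu_a\equiv\mu_0$ on $\Lambda_{a,-}$, so $\phi_a$ has vanishing Beltrami derivative there, which is the hybrid equivalence condition. Since $a\in\m$, Lemma \ref{cr} gives that $\Lambda_{a,-}$ is connected, and therefore $K_A$ is connected, so $B=1-A^2\in\mathcal{M}_1$.

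For uniqueness, suppose that $P_A$ and $P_{A'}$ are both hybrid equivalent to $\F_a$ on doubly pinched neighbourhoods of $\Lambda_{a,-}$. Composing the two equivalences produces a quasiconformal conjugacy $\Phi$ between restrictions of $P_A$ and $P_{A'}$ to doubly pinched neighbourhoods of $K_A$ and $K_{A'}$, with $\bar\partial\Phi=0$ on $K_A$. The plan is to extend $\Phi$ to a global quasiconformal conjugacy $\widetilde\Phi$ of $P_A$ and $P_{A'}$ on $\widehat\C$ whose Beltrami coefficient vanishes on $K_A\cup\mathcal{A}_A(\infty)$. On the basin of infinity, both $P_A$ and $P_{A'}$ are conformally conjugate to $h$ on $\widehat\C\setminus\overline\D$ (Proposition 4.2 of \cite{L1}), so composing these two external conjugacies yields a conformal extension of $\Phi$ across the basins. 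In between, one interpolates quasiconformally using the two dividing arcs as quasiarcs, matching the dynamics via repelling Fatou coordinates at the parabolic fixed point, in the manner of Proposition \ref{arcs}. Applying the Measurable Riemann Mapping Theorem to the resulting $P_A$-invariant Beltrami form produces an integrating map that conjugates $P_A$ to a rational map conformally equivalent to $P_{A'}$. Since both belong to the normalised family $Per_1(1)$, the conjugacy must be a Möbius transformation, forcing $A'=\pm A$ and therefore $B'=B$.

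The main obstacle is the cusp geometry of the doubly pinched neighbourhood at the parabolic fixed point, which blocks a direct quasicircle extension of $\Phi$. The resolution, essentially the doubly pinched refinement of the parabolic-like straightening theorem of \cite{L1}, is to work in Fatou coordinates: the two dividing arcs through $P_a$ are chosen to be quasiarcs compactly contained in the repelling petals, so that the gluing becomes a quasisymmetric matching problem (exactly as in Proposition \ref{arcs}) rather than a quasiconformal extension across a cusp. Once that matching is in place, the MRMT step is routine and delivers the desired uniqueness.
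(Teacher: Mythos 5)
Your existence half and the opening move of your uniqueness argument coincide with the paper: existence, with $B\in\mathcal{M}_1$ because $\Lambda_{a,-}$ is connected, is read off from the surgery of Section \ref{wf} (the integrating map $\phi_a$ is conformal a.e.\ on $\Lambda_{a,-}$ and conjugates $\F_a$ to $P_A$ on $O_a$), and uniqueness is reduced, by composing the two hybrid equivalences, to the rigidity statement that two maps in $Per_1(1)$ with connected Julia sets which are hybrid equivalent on doubly pinched neighbourhoods of their filled Julia sets are M\"obius conjugate. The paper settles that rigidity statement by citing Proposition 6.5 of \cite{L1}; you instead attempt to prove it inline, and it is there that your argument has a genuine gap.

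Concretely: (1) the conformal conjugacy $\sigma$ between the parabolic basins (which in the connected case comes from Fatou coordinates as in Section \ref{Per}; Proposition 4.2 of \cite{L1} is the disconnected-case statement on fundamental annuli) is not an ``extension of $\Phi$ across the basins'' --- $\sigma$ and the hybrid conjugacy $\Phi$ need not agree anywhere on the overlap of their domains, and reconciling them is exactly the hard point. (2) If you repair the mismatch by interpolating in a fundamental pinched region and spreading by the dynamics, the resulting global conjugacy $\widetilde\Phi$ has vanishing Beltrami coefficient on $K_A$ but \emph{not} on the basin: the dilatation of the interpolation is copied into all its dynamical preimages, which accumulate on $J_A$. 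So your claim that the Beltrami vanishes on $K_A\cup\mathcal{A}_A(\infty)$, i.e.\ almost everywhere on $\widehat\C$, is not delivered by the construction; and if it were, $\widetilde\Phi$ would already be M\"obius by Weyl's lemma and the appeal to the Measurable Riemann Mapping Theorem would be superfluous. (3) Decisively, the MRMT step does not close the loop as stated: integrating the $P_A$-invariant form $\mu=\widetilde\Phi^{*}\mu_0$ yields a map $\lambda$ such that $\lambda\circ P_A\circ\lambda^{-1}$ is rational and conformally conjugate to $P_{A'}$ (via $\widetilde\Phi\circ\lambda^{-1}$), but you still need $\lambda\circ P_A\circ\lambda^{-1}$ to be conformally conjugate to $P_A$ itself, i.e.\ that the deformation coming from the part of $\mu$ supported in the basin is conformally trivial. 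Establishing this triviality --- for instance by integrating the invariant Beltrami on the basin with a quasiconformal automorphism of the basin commuting with the dynamics, using that the basin is conformally the model $\bigl(\widehat\C\setminus\overline\D,\,h\bigr)$ --- is the heart of the Douady--Hubbard-type uniqueness argument, and in the parabolic setting it is precisely what Proposition 6.5 of \cite{L1} provides. Without it, your argument only shows that $P_{A'}$ is conformally conjugate to some quasiconformal deformation of $P_A$, not to $P_A$; the short fix is to quote the rigidity result, as the paper does.
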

\begin{proof}
Assume that there exist $\phi_1: \widehat \C \rightarrow \widehat \C$ and $\phi_2: \widehat \C \rightarrow \widehat \C$ quasiconformal maps such that 
$\widetilde F_{a,1}=\phi_{1} \circ F_{a} \circ \phi_{1}^{-1}=P_{A_1}$, and
$\widetilde F_{a,2}=\phi_{2} \circ F_{a} \circ \phi_{2}^{-1}=P_{A_2}$.
Then $P_{A_1}$ is hybrid equivalent to $P_{A_2}$ on a doubly pinched neighbourhood of the filled Julia set $K_{A_1}$, and so, by Proposition 6.5 in \cite{L1}, $P_{A_1}$ is M\"obius conjugate to $P_{A_2}$. Hence $(A_1)^2=(A_2)^2$, so $B_1=B_2$.
\end{proof}

\section{The map $\chi: \m \rightarrow M_1$ is a homeomorphism}\label{chi}
Our surgery construction converts a correspondence $\F_a$, $a\in\K$, into a rational map $P_A:a \to z+1/z+A$. A priori this rational map might depend on
all the choices made along the way in the construction, but if $a\in \m\cap\K$ then Proposition \ref{welldefined} guarantees that $P_A$ is unique up to 
$A\sim -A$. Moreover as $\F_a$ has connected limit set $\Lambda_{a,-}$, the hybrid equivalent map $P_A$ has connected filled Julia set $K_A$ and so
we may define a map
$$\chi:\m\cap\K \to \mathcal{M}_1$$
by $\chi(a)=B$, where $B=1-A^2$. We shall first prove that $\chi$ is injective, using the Rickmann Lemma (see Proposition \ref{inj}).
We will then define a quasiregular extension of $\chi |_{\m\cap\K}$ to $\K\setminus\m$
(Section \ref{extension} and Proposition \ref{extnqr}), which will turn out to be identical to the function $\chi$ that we have already defined via surgery
(Proposition \ref{same}).
In Section \ref{continuity}, we prove that $\chi$ is continuous on the whole of $\K$, using the Ma\~n\'e-Sad-Sullivan
partition of $\L\f'$ into $\mathring \m$, $\partial \m\setminus\{7\}$ and $\L\f'\setminus \m$ (Section \ref{mss}), and the now classical method
of Douady and Hubbard \cite{DH} in the formulation presented by Lyubich \cite{Lyu}. 
In Section \ref{hom} we show that $\chi: \K \rightarrow \chi(\K)$ is a homeomorphism (Proposition \ref{homeo_on_lune}).  
Finally, an analysis of the positions and diameters of the limbs of $\m$ in a neighbourhood of its root point $a=7$ and those of the corresponding limbs of ${\mathcal M}_1$
will enable  us to prove that every $B\in \mathcal{M}_1\setminus \{1\}$ is in the image of $\chi(\K)$ (for $K=\L\f'\setminus N)$ with the neighbourhood $N$ of $a=7$ sufficiently small (Section \ref{chi_surj}), 
and to deduce that $\chi$ is a homeomorphism from $\m$ to $\mathcal{M}_1$ (Section \ref{final}).

\subsection{Injectivity of $\chi:\m\cap\K\rightarrow \mathcal{M}_1$}\label{injectivity}
\begin{lemma}\label{doublypinchedconjugacy}
If $\chi(a_1)=\chi(a_2)$, then  $\F_{a_1}$ and $\F_{a_2}$ are hybrid equivalent on quadruply pinched neighbourhoods of 
$\Lambda_{a_1}=\Lambda_{a_1,-}\cup\Lambda_{a_1,+}$ and $\Lambda_{a_2}=\Lambda_{a_2,-}\cup\Lambda_{a_2,+}$ respectively.
\end{lemma}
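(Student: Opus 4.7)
My plan is to use the involution symmetry of the correspondence to transport the given hybrid equivalence on the backward limit sets to a hybrid equivalence on the forward limit sets, and then to glue the two pieces together at their common parabolic fixed point. The key algebraic ingredient is the identity $J_a\circ\F_a\circ J_a^{-1}=\F_a^{-1}$, which follows from the factorisation $\F_a=J_a\circ Cov_0^Q$ together with the fact that the deleted covering correspondence $Cov_0^Q$ equals its own inverse as a correspondence.

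I would first use Proposition \ref{welldefined} to conclude from $\chi(a_1)=\chi(a_2)$ that both $\F_{a_i}$ are hybrid equivalent to a common $P_A\in Per_1(1)$ on doubly pinched neighbourhoods of their respective $\Lambda_{a_i,-}$ (the sign ambiguity in $A$ being absorbed by the conformal involution $z\mapsto-z$ on $Per_1(1)$). Composing these two equivalences, and shrinking the domains if necessary, would give me a quasiconformal homeomorphism $\phi\colon W_1\to W_2$ between doubly pinched neighbourhoods of $\Lambda_{a_1,-}$ and $\Lambda_{a_2,-}$ that conjugates the $(2{:}1)$-branch of $\F_{a_1}$ on $W_1$ to that of $\F_{a_2}$ on $W_2$ and satisfies $\bar\partial\phi=0$ a.e.\ on $\Lambda_{a_1,-}$. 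Next I would set $\tilde W_1:=J_{a_1}(W_1)$, a doubly pinched neighbourhood of $\Lambda_{a_1,+}=J_{a_1}(\Lambda_{a_1,-})$, and define
$$\tilde\phi := J_{a_2}\circ\phi\circ J_{a_1}^{-1}\;\colon\;\tilde W_1\longrightarrow J_{a_2}(W_2).$$
A direct computation using $\phi\circ\F_{a_1}=\F_{a_2}\circ\phi$ and the symmetry identity yields $\tilde\phi\circ\F_{a_1}^{-1}=\F_{a_2}^{-1}\circ\tilde\phi$, so $\tilde\phi$ conjugates the $(1{:}2)$-branch of $\F_{a_1}$ on $\Lambda_{a_1,+}$ to that of $\F_{a_2}$ on $\Lambda_{a_2,+}$. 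Because $J_{a_1}$ and $J_{a_2}$ are Möbius, $\tilde\phi$ will have the same dilatation as $\phi$, and $\bar\partial\tilde\phi=0$ a.e.\ on $\Lambda_{a_1,+}$.

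To finish, I would glue $\phi$ and $\tilde\phi$ along their common point $P_{a_1}$. Since $J_{a_i}(P_{a_i})=P_{a_i}$ and $\phi(P_{a_1})=P_{a_2}$, we have $\tilde\phi(P_{a_1})=P_{a_2}=\phi(P_{a_1})$. By shrinking $W_1$ so that it lies, apart from $P_{a_1}$, in a fundamental domain of $J_{a_1}$ (as described in Section \ref{corr}), I would arrange that $W_1\cap\tilde W_1=\{P_{a_1}\}$. The map $\Phi$ equal to $\phi$ on $W_1$ and to $\tilde\phi$ on $\tilde W_1$ is then continuous on $W_1\cup\tilde W_1$ and quasiconformal on the complement of the isolated point $P_{a_1}$; classical removability of isolated points for quasiconformal homeomorphisms then upgrades $\Phi$ to a quasiconformal map on the whole domain. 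This domain is a quadruply pinched neighbourhood of $\Lambda_{a_1}=\Lambda_{a_1,-}\cup\Lambda_{a_1,+}$, with all four cusps meeting at $P_{a_1}$, and by construction $\Phi$ conjugates $\F_{a_1}$ to $\F_{a_2}$ as $(2{:}2)$ correspondences with $\bar\partial\Phi=0$ a.e.\ on $\Lambda_{a_1}$ — the required hybrid equivalence.

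I expect the most delicate step to be the gluing: arranging the doubly pinched neighbourhoods compatibly with the local geometry of $J_{a_1}$ near $P_{a_1}$ so that they intersect only in the single point $P_{a_1}$, and matching the pinch directions of $W_1$ and $\tilde W_1$ correctly so that the union really is a quadruply pinched neighbourhood of $\Lambda_{a_1}$. Once this has been arranged, the removability step is standard.
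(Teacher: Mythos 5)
Your proposal is correct and follows essentially the same route as the paper: the paper likewise composes the two straightening maps to get a hybrid conjugacy $\phi_{a_2}^{-1}\circ\phi_{a_1}$ between $\F_{a_1}$ and $\F_{a_2}$ on the doubly pinched neighbourhood $O_{a_1}$ of $\Lambda_{a_1,-}$, and then extends it to $J(O_{a_1})$ by the reflected formula $J\circ\phi_{a_1,a_2}\circ J$, exactly your $\tilde\phi=J_{a_2}\circ\phi\circ J_{a_1}^{-1}$. The only difference is cosmetic: since $O_{a_1}\subset V_{a_1}\subset\Delta_J$, the paper's two pieces meet only at the boundary cusp $P_{a_1}$, so no shrinking or removability argument at $P_{a_1}$ is needed there.
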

\begin{proof}

Assume  $\chi(a_1)=\chi(a_2)$. Then the composition $\phi_{a_1,a_2}:=\phi_{a_2}^{-1}\circ \phi_{a_1}$ is a hybrid
conjugacy between $\F_{a_1}$ and $\F_{a_2}$ on $O_{a_1}$. 

Note that $O_{a_1}$ 
is a doubly pinched neighbourhood of 
$\Lambda_{a_1,-}$, pinched at $P_{a_1}$ and $S_{a_1}$ (where $S_{a_1}$ is the preimage of $P_{a_1}$), and that
$O_{a_1} \subset V_{a_1} \subset \Delta_J$ (where $\Delta_J$ is the fundamental domain for the involution, which in $z$ coordinates is $J(z)=-z$). 
So the set 
$$\hat O_{a_1}=O_{a_1} \cup J(O_{a_1})$$ is a quadruply pinched neighbourhood of $\Lambda_{a_1}$, pinched at $S_{a_1}$, at 
$J(S_{a_1})$, and doubly pinched (from both sides) at the parabolic fixed point $P_{a_1}$.
The map 
$\overline \phi_{a_1,a_2}: \hat O_{a_1} \rightarrow \hat O_{a_2}$ defined as:
$$\overline \phi_{a_1,a_2}(z):= \left\{
\begin{array}{cl}
\phi_{a_1,a_2}(z) &\mbox{ if } z \in O_{a_1}\\
J(\phi_{a_1,a_2}(J(z)))
 &\mbox{ if } z \in J(O_{a_1})\\
\end{array}\right.
$$\\
is a hybrid conjugacy between $\F_{a_1}$ and $\F_{a_2}$ on a quadruply pinched neighbourhood of $\Lambda_{a_1}$.
\end{proof}

\begin{prop}\label{inj}
The straightening map $\chi: \m \cap \mathring K \rightarrow \mathcal{M}_1$ is injective.
\end{prop}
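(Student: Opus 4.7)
Suppose $\chi(a_1)=\chi(a_2)$. By Lemma~\ref{doublypinchedconjugacy} there is a quasiconformal hybrid conjugacy
$$\overline\phi_{a_1,a_2}:\hat O_{a_1}\to\hat O_{a_2}$$
between $\F_{a_1}$ and $\F_{a_2}$ on quadruply pinched neighbourhoods of the full limit sets, with $\overline\partial\,\overline\phi_{a_1,a_2}=0$ almost everywhere on $\Lambda_{a_1}$. The plan is to upgrade this local equivalence to a global M\"obius conjugacy and then invoke Lemma~\ref{unique} to force $a_1=a_2$.

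Outside the limit set, property (1) of the definition of mating gives for $i=1,2$ a conformal bijection $\phi_{a_i}:\Omega_{a_i}\to\H$ conjugating $\F_{a_i}|_{\Omega_{a_i}}$ to the action of $\{\alpha,\beta\}=PSL(2,\Z)$. Composing yields a conformal conjugacy $\psi:=\phi_{a_2}^{-1}\circ\phi_{a_1}:\Omega_{a_1}\to\Omega_{a_2}$ between $\F_{a_1}|_{\Omega_{a_1}}$ and $\F_{a_2}|_{\Omega_{a_2}}$. This $\psi$ is determined by $\phi_{a_1}$ up to post-composition by a M\"obius symmetry of the modular group action, a finite choice which I fix by requiring $\psi$ to send the characteristic parabolic fixed point $P_{a_1}$ to $P_{a_2}$, matching the action of $\overline\phi_{a_1,a_2}$ on the boundary cusp. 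Since $\overline\phi_{a_1,a_2}$ conjugates the germs of $\F_{a_1}$ and $\F_{a_2}$ at $P_{a_1}$, and the boundary $\partial\hat O_{a_1}\cap\Omega_{a_1}$ is foliated by orbits of the conjugated parabolics, rigidity of conjugacies of parabolic M\"obius germs then forces $\psi$ and $\overline\phi_{a_1,a_2}$ to agree on all of $\partial\hat O_{a_1}\cap\Omega_{a_1}$.

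I then define $\Phi:\widehat\C\to\widehat\C$ to equal $\psi$ on $\Omega_{a_1}$ and $\overline\phi_{a_1,a_2}$ on $\hat O_{a_1}$. By the matching above this is a well-defined homeomorphism, quasiconformal off the finite union of piecewise smooth arcs $\partial\hat O_{a_1}$. Applying the Rickmann removability lemma across these quasi-arcs, $\Phi$ is globally quasiconformal. Its Beltrami coefficient is zero a.e.: it vanishes on $\Omega_{a_1}$ since $\psi$ is conformal, and on $\Lambda_{a_1}$ since $\overline\phi_{a_1,a_2}$ is hybrid; as $\widehat\C=\Omega_{a_1}\cup\Lambda_{a_1}$, this is the whole sphere. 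By Weyl's lemma $\Phi$ is M\"obius, hence a conformal conjugacy between $\F_{a_1}$ and $\F_{a_2}$, and Lemma~\ref{unique} gives $a_1=a_2$.

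The main technical hurdle is the matching step: arranging the finitely ambiguous conformal conjugacy $\psi$ so that it coincides with the boundary values of $\overline\phi_{a_1,a_2}$ on $\partial\hat O_{a_1}\cap\Omega_{a_1}$. This reduces to identifying which characteristic fixed points correspond under each conjugacy and to a rigidity statement for quasiconformal conjugacies of the parabolic germs of $\F_{a_i}$ at $P_{a_i}$; once that is in place, the application of Rickmann's lemma and Weyl's lemma are formal.
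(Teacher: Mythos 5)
Your overall architecture (glue the hybrid conjugacy of Lemma~\ref{doublypinchedconjugacy} to the conformal conjugacy of the regular sets, conclude the glued map is M\"obius, then invoke Lemma~\ref{unique}) is the same as the paper's, but the matching step — which you correctly identify as the crux — does not work as proposed. First, your $\Phi$ is set equal to $\psi$ on $\Omega_{a_1}$ and to $\overline\phi_{a_1,a_2}$ on $\hat O_{a_1}$, and these two domains overlap in the open set $\hat O_{a_1}\cap\Omega_{a_1}$, where the maps cannot be expected to coincide: $\psi$ is conformal there, while $\overline\phi_{a_1,a_2}$ is only quasiconformal (the hybrid condition gives $\overline\partial\,\overline\phi_{a_1,a_2}=0$ only a.e.\ on $\Lambda_{a_1}$) and was produced by a surgery involving arbitrary choices. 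If instead you glue along the curve $\partial\hat O_{a_1}$, keeping $\overline\phi_{a_1,a_2}$ inside, then your Beltrami computation collapses, because the coefficient does not vanish on $\hat O_{a_1}\setminus\Lambda_{a_1}$, so Weyl's lemma gives nothing and you cannot reach Lemma~\ref{unique}. Second, the mechanism you invoke — ``rigidity of conjugacies of parabolic M\"obius germs'' — is not available: a parabolic germ has an enormous centralizer (in Fatou coordinates, anything commuting with $z\mapsto z+1$), so two conjugacies of the germs at $P_{a_1}$, $P_{a_2}$ need not agree on any invariant arc. (A minor remark in the other direction: $\psi$ is in fact unique, not finitely ambiguous, since only the identity M\"obius map commutes with both $\alpha$ and $\beta$ on $\H$.) Finally, even if you did know $\psi=\overline\phi_{a_1,a_2}$ on $\partial\hat O_{a_1}\cap\Omega_{a_1}$, that single curve is far from $\Lambda_{a_1}$ and would not give what the removability argument actually requires, namely continuity of the glued map at every point of $\Lambda_{a_1}$.

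The paper's proof supplies exactly this missing continuity. It defines $\Psi$ as $\overline\phi_{a_1,a_2}$ on $\Lambda_{a_1}$ only and as $R_{a_1,a_2}=R_{a_2}^{-1}\circ R_{a_1}$ on $\Omega_{a_1}$, and must show that $\overline\phi_{a_1,a_2}(z)$ and $R_{a_1,a_2}(z)$ have the same limit as $z\in\Omega_{a_1}$ tends to $\Lambda_{a_1}$. This is done dynamically: transport $\overline\phi_{a_1,a_2}$ to the half-plane, setting $\overline\Psi=R_{a_2}\circ\overline\phi_{a_1,a_2}\circ R_{a_1}^{-1}$ on $S_{a_1}=R_{a_1}(\hat O_{a_1}\cap\Omega_{a_1})$, foliate $S_{a_1}$ by vertical segments $L_x$, and use the landing of the paths $R_{a_1}^{-1}(L_x)$ for $x\in\Q$ at pre-fixed points of $\F_{a_1}$ (Proposition 1 and Corollary 1 of \cite{BL2}) together with equivariance of $\overline\phi_{a_1,a_2}$ to conclude that $\overline\Psi(L_x)$ lands at $x$ for every $x\in\Q$; order preservation of the leaves and density of $\Q$ then force the boundary extension of $\overline\Psi$ to be the identity on $\widehat\R$, which is precisely the continuity of $\Psi$ at $\Lambda_{a_1}$. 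Your proposal needs an argument of this kind (landing of the relevant paths at pre-fixed points and identification of the induced boundary map of $\H$ with the identity) in place of the parabolic-germ-rigidity claim; with that replacement, the remaining steps (removability and Lemma~\ref{unique}) go through as you describe.
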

\begin{proof}
 If $\chi|_{\m}$ is not injective, there exist two different correspondences $\F_{a_1}$ and $\F_{a_2}$, both with connected 
 limit set, hybrid equivalent to the same map $P_A$.
 Then, by Lemma \ref{doublypinchedconjugacy} above
 there exists a hybrid conjugacy $\overline \phi_{a_1,a_2}$ between $\F_{a_1}$ and $\F_{a_2}$ defined on the quadruply pinched neighbourhood $\hat O_{a_1}$ of $\Lambda_{a_1}$.

On the other hand, 
for every $a\in\m$ the Riemann map $R_a: \Omega_a \rightarrow \H$ conjugates the action of $\F_a$
on the regular set $\Omega_a=\widehat \C \setminus \Lambda_a$ to the action of the modular group on the upper half plane $\H$ (see \cite{BL1}, Theorem A).
Hence, the map $R_{a_1,a_2}:= R_{a_2}^{-1} \circ R_{a_1}: \Omega_{a_1} \rightarrow \Omega_{a_2}$ is a holomorphic conjugacy between $\F_{a_1}$ and $\F_{a_2}$ on their regular sets, and the map $\Psi: \widehat \C \rightarrow \widehat \C$ defined by:
$$ \Psi :=\left\{
\begin{array}{cl}
\overline \phi_{a_1,a_2} &\mbox{on  } \Lambda_{a_1}\\
R_{a_1,a_2} &\mbox{on  } \Omega_{a_1} \\
\end{array}\right.
$$
is a conjugacy between $\F_{a_1}$ and $\F_{a_2}$ on the whole Riemann sphere, holomorphic on $\mathring \Lambda_{a_1}$ and on $\Omega_{a_1}$. 
If $\Psi$ is continuous, then by the Rickmann Lemma it is holomorphic on $\widehat \C$, and
then by Lemma \ref{unique}, $\F_{a_1}=\F_{a_2}$. So we need to show
that $\Psi$ is continuous, more specifically that both $\overline \phi_{a_1,a_2}(z)$ and $R_{a_1,a_2}(z)$ tend to the same point when $z\in \Omega_{a_1}$  tends to $\Lambda_{a_1}$.

Let $S_{a_1}:= R_{a_1}(\hat O_{a_1} \cap \Omega_{a_1}) \subset \H$, $S_{a_2}:= R_{a_2}(\overline \phi_{a_1,a_2}(\hat O_{a_1}\cap \Omega_{a_1}))\subset \H$, 
and let $\overline \Psi$ denote the quasiconformal homeomorphism
$$\overline \Psi:=R_{a_2} \circ \overline \phi_{a_1,a_2} \circ R_{a_1}^{-1}: S_{a_1} \rightarrow S_{a_2}.$$
Foliate $S_{a_1}\subset \H$ by vertical line segments $L_x$, one for each $x\in \R$. Its image
$S_{a_2}={\overline \Psi}(S_{a_1})\subset \H$ is foliated by the paths ${\overline \Psi}(L_x)$. When $x \in \Q$, the path $R_{a_1}^{-1}(L_x)$ in $\Omega_{a_1}$
lands on $\Lambda_{a_1}$ (see \cite{BL2}, Proposition 1 and Corollary 1). The landing point is a preperiodic, indeed pre-fixed, point $z_0$ of $\F_{a_1}$ (or $\F_{a_1}^{-1}$
if $z_0\in \Lambda_{a_1,+}$).  The path $\overline \phi_{a_1,a_2}(R_{a_1}^{-1}(L_x))$
in $\Omega_{a_2}$ lands on $\Lambda_{a_2}$ at the corresponding pre-fixed point $\overline \phi_{a_1,a_2}(z_0)$ of $\F_{a_2}$.
But $\overline \phi_{a_1,a_2}(R_{a_1}^{-1}(L_x))=R_{a_2}^{-1}(\overline \Psi(L_x))$, so we deduce that the path $\overline \Psi(L_x)$ in $\H$ lands on the real axis at $x$.
Since this holds for every $x\in \Q$, and $\overline \Psi$ preserves the order of the leaves of the foliation of $S_{a_1}$, it follows that for every $x\in \R$ the path
$\overline \Psi(L_x)$ lands on the real axis at $x$, and thus that $\overline \Psi:S_{a_1} \to S_{a_2}$ extends continuously to the identity map on
$\partial\H=\widehat \R$. 
We deduce that for every sequence $z_n\in \Omega_{a_1}$ converging to a point $\hat z\in \Lambda_{a_1}$, the sequences $R_{a_2}(\overline \phi_{a_1,a_2}(z_n))$ and 
$R_{a_1}(z_n)$ in $\H$ converge to the same point of $\partial \H$. Hence the sequences $\overline \phi_{a_1,a_2}(z_n)$ and $R_{a_1,a_2}(z_n)=R_{a_2}^{-1}\circ R_{a_1}(z_n)$ in 
$\Omega_{a_2}$ converge to the same point (necessarily $\overline \phi_{a_1,a_2}({\hat z})$) in $\Lambda_{a_2}$.
\end{proof}

\subsection{The map $\chi$ is locally quasiregular on $\K\setminus \m$}\label{extension}
In this section, we show that with the right choice of $V_h$ we can write $\chi$ on $\K \setminus \m$ as the composition of iterated lifted 
extended tubings and two conformal maps (see Section \ref{ext} and Proposition \ref{same}). The
local quasiregularity of $\chi$ on $\K \setminus \m$ follows (see Proposition \ref{extnqr}).

\subsubsection{Extending the holomorphic motion $\tau$ by iterated lifting}\label{liftmotion}

Define $$\P\A^1_a:=\F_a^{-1}(\P\A_a)\cap O_a$$ 
and for each $k>1$ set 
$$\P\A^k_a:=\F^{-(k-1)}_a(\P\A^1_a).$$
Note that $\F_a:\P\A^1_a \to \P\A_a$ is a double covering over $\P\A_a\cap O_a$ but only a single covering over 
$\P\A_a\cap (\widehat C \setminus O_a)$,
and that all the $\F_a: \P\A^{k+1}_a \to \P\A^k_a$ are double-sheeted coverings.
The sets $\P\A_a$, $\P\A_a^k$ $(k\ge 1)$ and $\Lambda_{a,-}$ are disjoint, and their union is $V_a$.
Define corresponding subsets of the parameter space $\K$:
$$K^0:=\{a\in \K | v_a \in \P\A_a\} \ {\rm and} \ K^k:=\{a\in \K | v_a \in \P\A^k_a\}\ (k \ge 1),$$
and set 
$$K_{k}:=\mathring{K} \setminus \bigcup_{j=0}^{k-1} K^j,$$
noting that $(\m\cap\K)\subset K_{k}\subset  K_{k-1} \subset\ldots\subset K_0=\K$, and that each $K_k$ is the disjoint union of 
$K_{k-1}$ and $K^k$. Our base point $a_0$ is in $\m\cap \K$, so $a_0\in K_k$ for all $k\ge 0$.

When $a\in K_1$, 
the critical value $v_a$ is not in $\P\A_{a_0}$, so we can lift the qc homeomorphism
$\tau_a: \P\A_{a_0}\to \P\A_a$ to a qc homeomorphism of covers:
$$\tau^1_a:= \F_a^{-1}\circ \tau_a \circ \F_{a_0}\,\,:\P\A_{a_0}^1\rightarrow \P\A_a^1.$$
Despite the fact that $\P\A_{a_0}^1$ is defined as the intersection of $\F_{a_0}^{-1}(\P\A_{a_0}^1)$ with the complement of $O_{a_0}$, 
and the cover $\F_{a_0}: \P\A_{a_0}^1 \to \P\A_{a_0}$ is therefore double-sheeted in one part and single-sheeted in another, the lift exists
since the covering $\F_a: \P\A_a^1 \to \P\A_{a}$ is partitioned in exactly the corresponding way. 
When $a\in K_k$, $k\ge1$, we can iterate 
the process and lift $\tau^1_a$ to $2^{k-1}$-fold covers (this time they have $2^{k-1}$ sheets everywhere):
$$\tau^k_a:= \F_a^{-(k-1)}\circ \tau^1_a \circ \F^{k-1}_{a_0}\,\,:\P\A^k_{a_0}\rightarrow \P\A^k_a.$$
Denoting by $\tau_k$ the union over all $a\in K^k$ of $\tau_a$ and $\tau_a^j, \ 1\le j\le k$,
we have :
$$\tau_k: K_k \times (\P\A_{a_0} \cup \P\A_{a_0}^1 \cup... \cup \P\A_{a_0}^k) \rightarrow \C.$$
This union $\tau_k$ of lifts of $\tau$ is well-defined for all $a\in K_k$. Moreover it is continuous at the boundary between 
$\P\A_{a_0}$ and $\P\A^1_{a_0}$ because by definition the motion of the inner boundary $\partial V_{a_0}'$ of $\P\A_{a_0}$ is the lift (via $\F_{a_0}^{-1}$)
of that of the outer boundary $\partial V_{a_0}$. Similarly $\tau_k$ is continuous at the boundary between each $\P\A^j_{a_0}$ and $\P\A^{j+1}_{a_0}$. 
For $a_0$ and $a\in \m\cap \K$, we can repeat the iterative process indefinitely, obtaining in the limit a well-defined lift:
$$\tilde \tau: (\m\cap\K) \times (V_{a_0} \setminus \Lambda_{a_0,-}) \rightarrow \C,$$
which is a holomorphic motion on each connected component of $\m\cap\K$.

\subsubsection{Extending the tubing}\label{ehm}
 In Section \ref{wf} we defined the tubing $T$ as the family of qc homeomorphisms
 $$T_a:=f\circ\widetilde\tau_a^{-1}: \widehat \C \setminus O_{a}\rightarrow\widehat \C \setminus O_h,$$
 one for each $a\in \K$, where 
 $f:=\Psi_{O^c}\circ \alpha: \widehat \C \setminus O_{a_0}\rightarrow \widehat \C \setminus O_{h}$
 is a quasiconformal homeomorphism chosen during the surgery construction on $\F_{a_0}$.
 Denote $f( \P\A_{a_0})\subset \widehat \C \setminus O_h$ by $\P\A_h$.
 As $\F_{a_0}$ has connected limit set, there is no obstruction to lifting 
 $f_|:\P\A_{a_0}\to \P\A_h$ to $2^j$-fold covers for all $j$. Denoting this lift
 by $f^{(j)}$, and the lift of $\tau_a:\P\A_{a_0}\to \P\A_a$ to $2^j$-fold covers constructed in the previous subsection (\ref{liftmotion}) by $\tau_a^j$, 
 we extend the tubing $T_a$ by defining the qc homeomorphisms:
 $$T_a^j:= f^{(j)}\circ (\tau_a^j)^{-1}:\P\A_a^j \to \P\A_h^j :=h^{-j}(\P\A_h)\cap O_h\ \ (1\le j <\infty)$$
 for each $a \in K_k\subset\K$. 
 We note that $\bigcup_{j=0}^k T^j_a$ is continuous at the boundary between each $\P\A^j_a$ and $\P\A^{j+1}_a$, as 
 $\bigcup_{j=0}^k(\tau_a^j)^{-1}$ has this property.
 We have the following commuting diagram of qc homeomorphisms, where the bottom
 row (read left to right) is the composition $T_a$, and the top row is its lift $T_a^j$:
 
 \begin{picture}(220,100)
 
 \put(45,20){$\P\A_a$} 
 \put(125,20){$\P\A_{a_0}$}
 \put(210,20){$\P\A_h$}
 \put(120,23){\vector(-1,0){50}}
 \put(155,23){\vector(1,0){50}}

 \put(45,70){$\P\A_a^j$} 
 \put(125,70){$\P\A_{a_0}^j$}
 \put(210,70){$\P\A_h^j$}
 \put(120,73){\vector(-1,0){50}}
 \put(155,73){\vector(1,0){50}}
 
 \put(55,65){\vector(0,-1){30}}
 \put(135,65){\vector(0,-1){30}}
 \put(215,65){\vector(0,-1){30}}
 
 \put(60,45){$\F_a^j$}
 \put(140,45){$\F_{a_0}^j$}
 \put(220,45){$h^j$}
 
\put(90,78){$\tau_a^j$}
\put(170,78){$f^{(j)}$}
\put(90,28){$\tau_a$}
\put(170,28){$f_|$}

\end{picture}

If $a\in \m$, we can set $\overline T_a=T_a\cup \bigcup_{j=1}^{\infty}T^j_a$, and observe that $\overline T_a$ is a qc homeomorphism
$$\overline T_a: \widehat \C \setminus \Lambda_{a,-} \rightarrow \widehat \C \setminus \overline \D.$$
On the other hand, if $a\notin \m$, we can only iterate the double covering procedure until we reach the critical value $v_a$ of $\F_a$. 
To be precise, if $v_a \in \P\A^n_a$, then the last level to which we can lift the tubing is $T^n_a: \P\A_a^n \to \P\A_h^n$.

\subsubsection{Milnor's model for  $\C \setminus \mathcal{M}_1$}\label{Mil}
In the paper \cite{M1}, Milnor constructs a conformal isomorphism $\Psi$ between $\C \setminus \mathcal{M}_1$ and the punctured disc. 
We now briefly review this construction, as it is at the heart of the new characterisation of $\chi$ on $\m\cap\K$.

Let $Q_{1/4}(z)=z^2+1/4$, let $\mathcal B$ denote the interior of its filled Julia set, in other words the parabolic basin of attraction of $z=1/2$, 
and let $\mathcal P_0 \subset \mathcal B$ be the largest attracting petal of $z=1/2$ such that the Fatou coordinate 
carries $\mathcal P_0$ diffeomorphically onto a right half-plane. Then the critical point $z=0$ belongs to the boundary of $\mathcal P_0$, 
and hence the critical value $z=1/4$ lies on the boundary of
$\mathcal P_{-1}= Q_{1/4}(\mathcal P_0)$. We normalise the Fatou coordinate to send  the critical value to $1$.
Let $\mathcal P_1= Q_{1/4}^{-1}(\mathcal P_0)$, then $\mathcal P_{-1}\subset \mathcal P_0\subset \mathcal P_1$, and recursively 
defining $\mathcal P_{k+1}=Q_{1/4}^{-1}(\mathcal P_k)$, 
we obtain that the parabolic basin $\mathcal B$ is the union (see Figure 5 on page 497 of \cite{M1})
$$\mathcal P_0\subset \mathcal P_1 \subset \ldots \subset \mathcal P_k \subset \ldots.$$

A model space for $\C\setminus \mathcal{M}_1$ is provided by the disc (punctured at the critical value $z=1/2$):
$$(\mathcal B \setminus \mathcal P_{-1})/\alpha$$
where $\alpha$ is the identification $z\sim \bar z$ on $\partial\mathcal P_{-1}$. To each $B$ in the `shift locus' $Per_1(1)\setminus\mathcal{M}_1$
Milnor associates a point in this model space by the following recipe.  
Let $\mathcal Q_{A,0}$ be the largest attracting petal of the parabolic fixed point of $P_A,\,B=1-A^2$,
such that the Fatou coordinate carries $\mathcal Q_{A,0}$ diffeomorphically onto a right half-plane. We normalise the Fatou coordinate to send the first critical value $z=-2+A$ of $P_A$ to $1$.
There exists a unique conformal map isomorphism
$\varphi_0:\mathcal P_0 \to \mathcal Q_{A,0}$ conjugating $Q_{1/4}|\mathcal P_0 $ to $P_A|\mathcal Q_{A,0}$ and sending $0$ to the first critical point $z=-1$ of $P_A$, namely the composition of the Fatou coordinate for $Q_{1/4}$ (normalised by sending $1/4$ to $1$) and the inverse Fatou coordinate for $P_A$ (normalised by sending the $A-2$ to $1$). Note that one can lift $\varphi_0$ back to
$\varphi_k:\mathcal P_k \to \mathcal Q_{A,k}:=P_A^{-k}(\mathcal Q_{A,0})$ up to and including the first value $k$ for which $\mathcal Q_{A,k}$ contains the second critical 
value $v_A=2+A$ of $P_A$.  In the second proof
of Theorem 4.2 in \cite{M1}, using a method suggested by Shishikura, Milnor shows that the assignment $B=1-A^2 \mapsto \varphi_k^{-1}(v_A)$ defines a conformal isomorphism $\Psi$ from $\C\setminus \mathcal \mathcal{M}_1$
to $(\mathcal B \setminus \mathcal P_{-1})/\alpha$.

\subsubsection{The map $\chi$ on $\K\setminus\m$}\label{ext}
Let $\Phi: \mathcal B \rightarrow \widehat \C \setminus \overline \D$ be the conformal conjugacy between $Q_{1/4}|_{\mathcal B}$ and $h|_{\widehat \C \setminus \overline \D}$, given by Fatou coordinates (where $h(z)$, as always, denotes $\frac{z^2+1/3}{z^2/3+1}$).
More specifically, let $\mathcal H_0 \subset \widehat \C \setminus \overline \D$ be the largest attracting petal of the parabolic fixed point $1$
of $h$ such that the Fatou coordinate (normalized to send the critical value $z=3$ of $h$ to the point $1$) carries $\mathcal H_0$ diffeomorphically onto a right half-plane.
There exists a unique conformal isomorphism
$\phi_0:\mathcal H_0 \to \mathcal P_0$ conjugating $h|\mathcal H_0 $ to $Q_{1/4}|\mathcal P_0$ and sending the critical value $3$ of $h$ in $H_0$ to the critical value $1/4$ of $Q_{1/4}$ in $\mathcal P_0$ (namely the composition of Fatou coordinates for $h$ normalised by sending $3$ to $1$ and the inverse Fatou coordinates for $Q_{1/4}$ sending $1/4$ to $1$). As $h$ and $Q_{1/4}$ both have connected Julia sets, we can lift $\phi_0$ to the whole parabolic basin, obtaining a conformal conjugacy $\Phi: \mathcal B \rightarrow \widehat \C \setminus \overline \D$ between $Q_{1/4}|_{\mathcal B}$ and $h|_{\widehat \C \setminus \overline \D}$.

 For $a\in \K\setminus\m$, let $n(a)$ be the entry time of the critical value $v_a$ into the 
pinched fundamental annulus, that is $\F_a^{n(a)}(v_a)\in \P\A_a$, or equivalently  $v_a\in  \P\A_a^n=\F_a^{-n(a)}(\P\A_a)\cap O_a$.
Then we can write $\chi|_{\m\cap\K}$ as (see Figure \ref{extensionpicture}):
$$\chi: \K \setminus \m \rightarrow \C \setminus \mathcal{M}_1$$
 $$a \rightarrow\Psi^{-1}\circ \Phi^{-1}\circ T^{n(a)}_a(v_a).$$
      \begin{figure}
\centering
\psfrag{T}{\small $T_a$}
\psfrag{a}{\small $a \rightarrow v_a$}
\psfrag{M}{\small $\Psi^{-1}\circ \Phi^{-1}$}
\psfrag{p}{\small $f$}
\psfrag{g}{\small $\F_a$}
\psfrag{f}{\small $\F_{a_0}$}
\psfrag{t}{\small $\tau_a$}
	\includegraphics[width= 9cm]{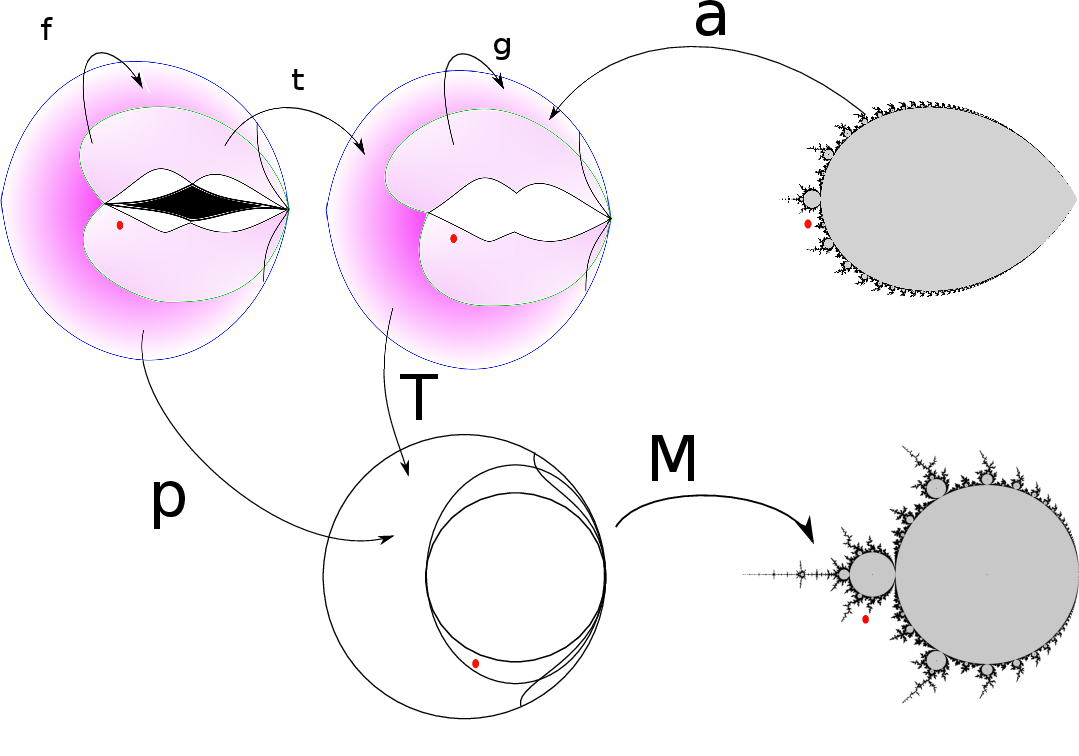}
\caption{\small The map $\chi: \K \setminus \m \rightarrow \C \setminus \mathcal{M}_1$.}
\label{extensionpicture}
\end{figure}

\begin{prop}\label{same}
With a suitable choice of 
$V_h$ in the initial surgery on $\F_{a_0}$, the map
$\chi:\K\setminus\m \to Per_1(1)\setminus{\mathcal M}_1$ becomes the composition $$a \mapsto \Psi^{-1}\circ \Phi^{-1}\circ T^{n(a)}_a(v_a).$$
\end{prop}
\begin{proof}
In our surgery construction for $\F_{a_0}$ (and hence for $\F_a,\,a\in \K$), we defined $V_h$ to be the disc of radius 
$(1+\epsilon)$ tangent to the unit circle at the parabolic fixed point $1$, and $V_h'=h^{-1}(V_h)$. But we could equally well have defined it to be
$\mathcal H_0\subset \widehat \C \setminus \overline \D$ (coloured green in Figure \ref{tiles}), the maximal attracting petal for the parabolic fixed point $z=1$ of $h$, with boundary passing through the critical point, i.e. $\infty \in \partial \mathcal H_0$, and $\mathcal H_1=h^{-1}(\mathcal H_0)$. In this case, $O_h$ is the connected component of $\mathcal H_1 \setminus \gamma_h$ containing the unit disc, and $\P\A_h=V_h\setminus O_h$.
Then, when the map $F_{a_0}$ resulting from surgery to the correspondence $\F_{a_0}$
is straightened to become $P_{A_0}$, 
the pinched fundamental annulus $\P\A_{a_0}$ is identified with the pinched fundamental annulus $\P\A_h$, and so each tile 
$\mathcal Q_{P_0,k+1}\setminus \mathcal Q_{P_0,k}$ is identified with the corresponding $\mathcal H_{k+1}\setminus \mathcal H_k$, where $\mathcal H_k =h^{-k}(\mathcal H_0)$.

Let $\phi_a$ be the straightening map for $\F_a$ constructed in Section \ref{wf}, and let $\mathcal Q_{A,n}=P_A^{-n}(\mathcal Q_{A,0})$. Then for each fixed $a$ the map $\Phi^{-1}\circ T_a^{n(a)}\circ \phi_a^{-1}: \mathcal Q_{A,n(a)}\rightarrow \mathcal P_{n_a} \subset \mathcal B$ is conformal, as 
by construction 
 $(\Phi^{-1}\circ T_a^{n(a)}\circ \phi_a^{-1})^*\mu_0=\mu_0$. This composition conjugates $P_A|_{\mathcal Q_{A,n(a)}}$ to $Q_{1/4}|_{\mathcal P_{n(a)}}$, and it sends the first critical value $z=-2+A$ of $P_A$ to the critical value $1/4$ of $Q_{1/4}$. Therefore, by unicity, $\Phi^{-1}\circ T_a^{n(a)}\circ \phi_a^{-1}=\varphi_{n(a)}^{-1}$, where $\varphi_0:\mathcal P_0 \to \mathcal Q_{A,0}$ is the unique conformal isomorphism used by Milnor to define the conformal isomorphism $\Psi: \C \setminus M_1 \rightarrow (\mathcal B \setminus \mathcal P_{-1})/\alpha$ (see Section \ref{Mil}).
 Hence, for every $a \in \mathcal K \setminus \m$, 
$$ \Psi^{-1}\circ \Phi^{-1}\circ T^{n(a)}_a(v_a)=  \phi_a(v_a),$$
 and the map $\chi|_{\m\cap\K}$ can be written as $$\chi: \K \setminus \m \rightarrow \C \setminus \mathcal{M}_1$$
 $$a \rightarrow\Psi^{-1}\circ \Phi^{-1}\circ T^{n(a)}_a(v_a).$$
\end{proof}
 \begin{figure}[hbt!]
\centering
\includegraphics[width= 5cm]{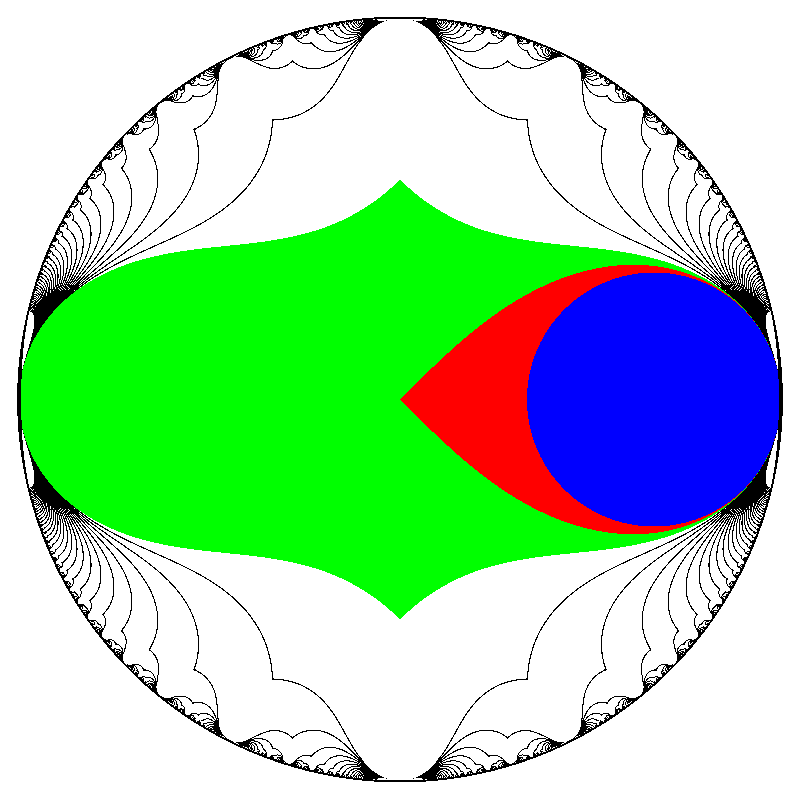}
\caption{\small Milnor tiling for $h$ on the unit disc (compare Fig. 5 in \cite{M1}). 
The petal $\mathcal P_{-1}$ is coloured blue, the set $\mathcal P_0 \setminus \mathcal P_{-1}$ is in red,
and $\mathcal P_1 \setminus \mathcal P_0$ is in green, so $\mathcal P_1$ is the coloured region. We take as 
$V_h\setminus h^{-1}(V_h)$
the tile
$\mathcal P_1 \setminus \mathcal P_0$, or more accurately the corresponding tile in $\widehat \C \setminus \overline \D$.}
\label{tiles}
 \end{figure}

 \begin{prop}\label{extnqr}
 $\chi: \K \setminus \m \rightarrow \C \setminus \mathcal{M}_1$ is locally quasiregular.
 \end{prop}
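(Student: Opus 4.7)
The plan is to use Proposition \ref{same} to express $\chi$ locally around any $a_0\in\K\setminus\m$ as a composition of \emph{fixed} quasiconformal/conformal maps with one $a$-dependent map coming from a holomorphic motion evaluated along the holomorphic curve $a\mapsto v_a$, and then to prove quasiregularity of this last map by a standard Beltrami-inequality argument.

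Fix $a_0\in\K\setminus\m$ and set $n_0=n(a_0)$. Because $a\mapsto v_a$ is holomorphic and the pieces $\P\A^k_a$ move holomorphically (Section \ref{liftmotion}), whenever $v_{a_0}$ lies in the interior of $\P\A^{n_0}_{a_0}$ -- the generic case -- there is a neighbourhood $U\ni a_0$ on which $n(a)\equiv n_0$. By Proposition \ref{same} and the definition $T^{n_0}_a=\Psi^{(n_0)}\circ(\tau^{n_0}_a)^{-1}$ of the lifted tubing, for $a\in U$
\begin{equation*}
\chi(a)\;=\;\bigl(\Psi\circ\Phi\circ\Psi^{(n_0)}\bigr)\bigl((\tau^{n_0}_a)^{-1}(v_a)\bigr).
\end{equation*}
The outer bracketed composition is independent of $a$ (two conformal maps composed with the one quasiconformal map $\Psi^{(n_0)}$), so local quasiregularity of $\chi$ on $U$ is equivalent to quasiregularity of
\[
\eta:U\to\P\A^{n_0}_{a_0},\qquad\eta(a):=(\tau^{n_0}_a)^{-1}(v_a).
\]

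Shrinking $U$ so that it is a topological disc sitting inside a single connected component of $K_{n_0}$, I would apply Slodkowski's extended $\lambda$-lemma to extend $\tau^{n_0}$ to a holomorphic motion $\hat\tau^{n_0}:U\times\widehat\C\to\widehat\C$ of the whole sphere, with uniform dilatation bound $\|\mu_a\|_\infty\le k<1$ where $\mu_a$ denotes the Beltrami coefficient of $\hat\tau^{n_0}_a$. Differentiating the defining identity $\hat\tau^{n_0}_a(\eta(a))=v_a$ with respect to $\bar a$, and using that both $v_a$ and $\hat\tau^{n_0}(a,z)$ are holomorphic in $a$, the chain rule yields
\begin{equation*}
\partial_z\hat\tau^{n_0}_a\cdot\partial_{\bar a}\eta\,+\,\partial_{\bar z}\hat\tau^{n_0}_a\cdot\overline{\partial_a\eta}\;=\;0,
\end{equation*}
that is $\partial_{\bar a}\eta=-\mu_a(\eta(a))\,\overline{\partial_a\eta}$, and hence $|\partial_{\bar a}\eta|\le k|\partial_a\eta|$ pointwise. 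Combined with continuity of $\eta$ and the $W^{1,2}_{\mathrm{loc}}$ regularity inherited from the Slodkowski extension, this is the Beltrami characterisation of $K$-quasiregularity with $K=(1+k)/(1-k)$, hence of $\chi|_U$.

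The main obstacles I anticipate are two. First, rigorously justifying the Sobolev/ACL regularity of $\eta$ that is needed to promote the pointwise Beltrami estimate into true quasiregularity; this follows from standard consequences of holomorphic motions (qc slices give ACL, holomorphic dependence in $a$ gives Hölder estimates) but must be spelled out carefully. Second, disposing of the exceptional parameters where $v_a$ lies on the boundary of some $\P\A^k_a$ and $n(a)$ is not locally constant; these form a proper real-analytic subset of $\K\setminus\m$, and local quasiregularity extends across them using continuity of $\chi$ together with a standard removability argument for quasiregular maps on sets of zero area.
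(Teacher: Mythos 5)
Your reduction is the same as the paper's: using Proposition \ref{same} you strip off the $a$-independent conformal maps $\Psi,\Phi$ and the fixed quasiconformal lift $\Psi^{(n_0)}$, so that everything comes down to quasiregularity of $a\mapsto(\tau^{n(a)}_a)^{-1}(v_a)$. Where you diverge is in how that last map is handled. The paper does not split into a generic and an exceptional case at all: it views the union $\tau\cup\tau_1\cup\dots\cup\tau_k$ of the lifted motions as generating a foliation whose leaves are continuous across the boundaries between $\P\A^j_{a_0}$ and $\P\A^{j+1}_{a_0}$ (this matching is recorded in Section \ref{liftmotion}/\ref{ehm}), and then cites Lyubich's Lemma 17.9: the holonomy along the leaves from the holomorphic curve $\{(a,v_a)\}$ to the transversal $\{(a_0,z)\}$ is locally quasiregular, so level-crossings of $n(a)$ cause no extra work. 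The only additional care in the paper is for leaves whose component through $(a,v_a)$ might not reach the transversal (since connectivity of $\m$ is not yet known), which is handled by the equivariant definition $\F_a^{-n(a)}\circ\tau_a\circ\F_{a_0}^{n(a)}$ --- the same equivariance you implicitly use to have $\tau^{n_0}_a$ defined near your chosen parameter. Your Slodkowski-plus-Beltrami computation ($\partial_{\bar a}\eta=-\mu_a(\eta)\overline{\partial_a\eta}$) is essentially a re-proof of that holonomy lemma, and is fine in substance, modulo the Sobolev/ACL bookkeeping you flag and a small rebasing point: the motion is based at $a_0\in\mathring\m$, not at your point of $\K\setminus\m$, so before applying Slodkowski over a disc $U$ not containing $a_0$ you should rebase, e.g.\ replace $\tau^{n_0}_a$ by $\tau^{n_0}_a\circ(\tau^{n_0}_{a_1})^{-1}$ (also note your symbol $a_0$ collides with the paper's base point).

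Two points in your treatment of the exceptional set need repair. First, the removability criterion you invoke is wrong as stated: a closed set of zero area is in general \emph{not} removable for quasiregularity of a continuous map (zero-area Cantor sets can fail to be removable). What saves you is that the level-crossing set is locally contained in finitely many real-analytic arcs (the conditions are of the form ``$\F_a^{j}(v_a)$ lies on $\partial V_a$ or on $\gamma_a$'', holomorphic data meeting holomorphically moving analytic arcs), hence has $\sigma$-finite linear measure, and sets of $\sigma$-finite length \emph{are} removable for continuous $K$-quasiregular maps, provided you also check a locally uniform dilatation constant on both sides of the crossing (which your Slodkowski bounds do give on a compactly contained disc). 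Second, invoking ``continuity of $\chi$'' here is circular as written: in the paper, continuity of $\chi$ on $\K\setminus\m$ is \emph{deduced from} Proposition \ref{extnqr}. You must instead get continuity across the crossing directly from the construction, which is available: the lifted tubings $\bigcup_j T^j_a$ match continuously at the boundary between $\P\A^j_a$ and $\P\A^{j+1}_a$ (Section \ref{ehm}), and $a\mapsto v_a$ and the motions are continuous, so $a\mapsto T^{n(a)}_a(v_a)$ is continuous through parameters where $n(a)$ jumps. With those two fixes your argument goes through; the paper's foliation/holonomy formulation simply avoids both issues at the cost of quoting Lyubich's lemma.
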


\begin{proof}
As $\Phi$ and $\Psi$ are conformal, to prove that  $\chi: \K \setminus \m \rightarrow \C \setminus \mathcal{M}_1$ is locally quasiregular, 
it will suffice to show that $a \to T_a^{n(a)}(v_a)$ is locally quasiregular.  As the tubing is defined as the composition $T_a=f \circ {\tilde\tau}_a^{-1}$, where $f$ is quasiconformal, it is enough to show that the map $a \rightarrow \tau_{n(a)}^{-1}(v_a)$ is locally quasiregular. 

Consider the union of the holomorphic motions of $\P\A_{a_0}$ and its lifts as a map
$$\tau\cup\tau_1\cup\ldots\cup\tau_k:  \K \times (\P\A_{a_0}\cup\P\A^1_{a_0}\ldots \cup\P\A^k_{a_0}) \to \K\times \C$$
defined at the points where the various lifts exist. For each $j$ and $z\in \P\A_{a_0}^j$ let $\ell_z$ denote the image of $z$ under the lifted motion $\tau_j$. The image of the map 
$\tau\cup\tau_1\cup\ldots\cup\tau_k$ is foliated by the leaves $\ell_z$.

The set $\{(a_0,z): z \in V_{a_0}\setminus \Lambda_{a_0}\}$ is a curve transversal to this foliation (by definition of the holomorphic motion), and the set $\{(a,v_a):a\in \K\}$ is a holomorphic curve in the domain of the motion, so by Lemma 17.9 in \cite{Lyu} the holonomy along the leaves from the curve $\{(a,v_a):a\in \K\}$ to the transversal 
$\{(a_0,z): z \in V_{a_0}\setminus \Lambda_{a_0}\}$ is locally quasiregular. In the case that each leaf in the foliation is connected, the statement of the Proposition follows.

However, we have not yet proved that $\m$ is connected, so we must allow for the possibility that there exist points $a \in \K$ for which the leaf component through $(a,v_a)$ does not meet the transversal $\{(a_0,z): z \in V_{a_0}\setminus \Lambda_{a_0}\}$, and so the holonomy map is not defined directly. But in this case we can use equivariance to define it indirectly: we send $v_a$ to $\F_a^{n(a)}(v_a)\in \P\A_a$, we then move $\F_a^{n(a)}(v_a)$ to $\P\A_{a_0}\subset (V_{a_0}\setminus \Lambda_{a_0})$ by holonomy, and finally apply the appropriate branch of $\F_{a_0}^{-n(a)}$ within $V_{a_0}\setminus \Lambda_{a_0}$. The result now follows 
as before.
\end{proof}

\subsection{Continuity of $\chi$ on $\m\cap\K$ and holomorphicity on $\mathring \m$}\label{continuity}
We know from the previous section that $\chi$ is continuous on $\K\setminus \m$ (since it is locally quasiregular). 
We next prove that $\chi$ is continuous on $\partial \m\cap \K$ and holomorphic on $\mathring \m$, thus proving continuity everywhere in $\K$.
The proofs in this section follow those formulated by Douady and Hubbard \cite{DH}, and refined by Lyubich \cite{Lyu}, and subsequent authors: we present 
details to make this article as self-contained as possible.  

\subsubsection{Indifferent periodic points}\label{mss}
We start by considering the Ma\~n\' e-Sad-Sullivan decomposition of the parameter space $\L\f'$.
Recall that for all $a \in \L\f'$, $\F_a|: V'_a \rightarrow V_a$ is a degree $2$ 
holomorphic map depending
holomorphically on the parameter, with a persistent parabolic fixed point at $P_a$. Recall also that by Corollary 1.2 in \cite{BL1}, 
the boundary of the backward limit set $\partial \Lambda_{a,-}$ is the closure of the set of repelling periodic points 
of $\F_a|$.

We define $\mathcal{I}$ to be the set of parameters in $\L\f'$ for which $\F_a$ has a non-persistent indifferent
periodic point, and set $\mathcal{R}= \L\f' \setminus \overline{\mathcal{I}}$. The set $\mathcal{R}$ is open, and there
 $\partial \Lambda_{a,-}$ moves holomorphically (these results follow from the implicit function theorem and the $\lambda$-Lemma, see \cite{MSS}).

\begin{prop}\label{neutparametervalues}
$\mathcal{R}= \L\f' \setminus \partial \m$.\end{prop}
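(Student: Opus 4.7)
The plan is to prove the two inclusions of the equivalent identity $\overline{\mathcal I}=\partial\m\cap\L\f'$ separately, exploiting Lemma \ref{cr} (which identifies $\m$ as $\{a\in\L\f'\cup\{7\}:c_a\in\Lambda_{a,-}\}$) together with the structural fact that in the $Z$-coordinate the free critical point $c_a=-1$ is literally independent of $a$.

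For $\mathcal I\subset\partial\m$, which upgrades to $\overline{\mathcal I}\subset\partial\m$ since $\partial\m$ is closed, I would pick $a_0\in\mathcal I$ carrying a non-persistent indifferent periodic point of period $k$. The implicit function theorem lets us follow it on a neighbourhood of $a_0$ with multiplier $\rho(a)$ holomorphic and non-constant, so by the open mapping theorem $\rho$ takes values of modulus strictly less than $1$ arbitrarily close to $a_0$. At such $a$ the cycle is attracting, and since $\F_a|_{V'_a}\!:V'_a\to V_a$ is a degree-two parabolic-like map whose only free critical orbit is that of $c_a$ (the persistent parabolic basin of $P_a$ being accounted for by the critical point situated at $P_a$ itself), this new attracting basin must be fed by the orbit of $c_a$; hence $c_a\in\Lambda_{a,-}$ and $a\in\m$. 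Letting $a\to a_0$ gives $a_0\in\overline{\m}=\m$. To rule out $a_0\in\mathring{\m}$, I would invoke the structure of the interior: every component of $\mathring{\m}$ is either hyperbolic (all non-persistent cycles attracting or repelling) or queer (all non-persistent cycles repelling), so none admits a non-persistent indifferent cycle. Thus $a_0\in\partial\m$.

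For the reverse inclusion $\mathcal R\subset\L\f'\setminus\partial\m$, let $a_0\in\mathcal R$ and let $W$ denote its component in $\mathcal R$. By the Ma\~n\'e-Sad-Sullivan theorem, $\partial\Lambda_{a,-}$ carries a dynamics-preserving holomorphic motion over $W$, extending to $\widehat{\mathbb C}$ by Slodkowski. If $c_{a_0}\notin\Lambda_{a_0,-}$, some iterate $\F_{a_0}^n(c_{a_0})$ lies outside $V_{a_0}$, an open condition on $a$, so a neighbourhood of $a_0$ lies in $\L\f'\setminus\m$ and $a_0\notin\partial\m$. Otherwise $c_{a_0}\in\Lambda_{a_0,-}$, and I would rule out the Misiurewicz case $c_{a_0}\in\partial\Lambda_{a_0,-}$ as follows: such a relation $\F_{a_0}^n(-1)=z_{\mathrm{rep}}(a_0)$ pits the constant map $a\mapsto-1$ against a holomorphically moving repelling cycle $z_{\mathrm{rep}}(a)$, and the classical normal-families argument of MSS then produces parameters $a\to a_0$ at which a periodic orbit becomes indifferent, contradicting $a_0\in\mathcal R$. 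Hence $c_{a_0}$ lies in a bounded Fatou component of $\F_{a_0}|$, which moves holomorphically over $W$ and continues to contain the fixed point $-1$ on a neighbourhood of $a_0$; so $c_a\in\Lambda_{a,-}$ there and $a_0\in\mathring{\m}$.

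The principal obstacle is the Misiurewicz subcase just discussed: making rigorous the claim that the constant critical value $-1$ cannot lie on the moving Julia boundary at a J-stable parameter. Because $c_a$ is genuinely fixed in our coordinates, the usual MSS reasoning has to be read with the roles of critical point and repelling cycle exchanged, and one must verify transversality between $a\mapsto-1$ and the holomorphically varying iterated preimages of repelling cycles before invoking a Montel-type argument to manufacture the bifurcating indifferent cycles. Once this transversality is established, the remaining ingredients — openness of the finite escape condition, and the preservation of Fatou components by the holomorphic motion — are routine.
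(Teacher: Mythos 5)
There are two genuine gaps, one in each inclusion. For the inclusion $\mathcal{I}\subset\partial\m$, your argument that $a_0\in\m$ (perturb to an attracting multiplier, use the Fatou argument that the attracting basin of the degree-two restriction $\F_a|_{V_a'}$ must capture $c_a$, then use closedness of $\m$) is fine and parallels the paper's treatment of $\L\f'\setminus\m$. But your exclusion of $a_0\in\mathring\m$ is circular: ``hyperbolic'' and ``queer'' are defined only by whether \emph{some} parameter of the component carries an attracting cycle, and the claim that such components carry no non-persistent indifferent cycle is exactly the assertion $\mathcal{I}\cap\mathring\m=\emptyset$ you are trying to establish (indeed the paper's Proposition \ref{hypercom} quotes Proposition \ref{neutparametervalues} to get that property of hyperbolic components, not the other way round). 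The paper proves $\mathcal{I}\cap\mathring\m=\emptyset$ by the classical normal-families argument: follow the cycle by the implicit function theorem, take $a_n\to a_0$ with $|\rho(a_n)|<1$, observe that the critical-orbit functions $F_p(a)=\F_a^{i+kp}|(c_a)$ are bounded on a neighbourhood $W(a_0)\subset\m$, hence normal, so a limit function must coincide with the moving periodic point on all of $W(a_0)$, contradicting the presence of parameters where that cycle is repelling; and it treats the multiplier-one case separately via a double cover $t\mapsto t^2+a_0$ (a case your use of the implicit function theorem also silently ignores). Some argument of this kind is indispensable; it cannot be replaced by the hyperbolic/queer dichotomy.

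For the inclusion $\mathcal{R}\subset\L\f'\setminus\partial\m$, the subcase $c_{a_0}\in\partial\Lambda_{a_0,-}$ cannot be ruled out the way you propose. First, a critical point on $\partial\Lambda_{a_0,-}$ need not satisfy a Misiurewicz relation $\F_{a_0}^n(c_{a_0})=z_{\mathrm{rep}}(a_0)$, so your case analysis is incomplete. Second, and more seriously, the statement you want is unprovable at this level of generality: on a queer component (whose possible existence the paper explicitly accommodates in Proposition \ref{queer}) the limit set $\Lambda_{a,-}$ is connected with empty interior, so $c_a\in\partial\Lambda_{a,-}$ while the parameter is J-stable, and no indifferent bifurcations can be manufactured there; the ``transversality'' you flag as the principal obstacle is, in effect, a density-of-hyperbolicity type assertion. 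Fortunately none of this is needed: once $\partial\Lambda_{a,-}$ moves holomorphically over a component $W$ of $\mathcal{R}$, its homeomorphism type (connected versus Cantor) is constant on $W$, so by Lemma \ref{cr} membership of $\m$ is constant on $W$; since $\m$ is closed, either $W\subset\mathring\m$ or $W\cap\m=\emptyset$, and in both cases $W\cap\partial\m=\emptyset$. This is the paper's one-step argument, requiring no tracking of the critical point and no case analysis.
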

\begin{proof}
As on $\mathcal{R}$ we can construct a holomorphic motion of the set $\partial \Lambda_{a,-}$ (using the implicit function theorem on the repelling periodic points and then the $\lambda$-Lemma, see \cite{MSS}),
and since we cannot map a connected set homeomorphically to a disconnected one and vice versa, $\mathcal{R}$ cannot intersect $\partial \m$, so 
$\mathcal{R} \subset \L\f' \setminus \partial \m.$
Therefore, to prove the statement, it is enough to prove that  $\L\f' \setminus \partial \m \subset\mathcal{R}$. 
It is easy to show that  $\L\f' \setminus \m \subset \mathcal{R}$: for all $a \in \L\f' \setminus \m$ the critical point $c_a$ of $\F_a|_{V_a'}$ is outside $\Lambda_{a,-}$, 
hence $\F_a|_{V'_a}$ cannot have an indifferent periodic point in addition to $P_a$, so $\mathcal{I} \cap (\L\f' \setminus
\m) = \emptyset$, which implies  $\L\f' \setminus \m \subset \mathcal{R}$ as $\L\f' \setminus \m$ is open.
Hence, it remains to prove that $\mathring \m \subset \mathcal{R}$, or equivalently (as $\mathring \m$ is open), that
 $\mathcal{I} \cap \mathring \m = \emptyset$, which we will do by contradiction. 
 
So, let us assume that there exists $a_0 \in \mathring{\m}$ for which $\F_{a_0}|_{V'_{a_0}}$ has an
indifferent periodic point $z_0$ of period $k$, and assume first $(\F^k_{a_0}|_{V'_{a_0}})'(z_0)\neq 1$.
It follows by the Implicit Function Theorem that there exist a neighbourhood $W(a_0)$ of $a_0$ in $\mathring{\m}$ and a neighbourhood $O(z_0)$ of $z_0$ 
where the cycle $\{z^1(a),...,z^k(a)\}$, its multiplier $\rho(a)= (\F^k_a|_{V'_{a_0}})'(z_a)$, and the critical point $c(a)$ move holomorphically with the
parameter, and $a_0$ is the unique parameter in $W(a_0)$ for which the cycle is indifferent with multiplier $\rho(a_0)$. 
Let $(a_n) \in W(a_0)$ be a sequence converging to $a_0$ such that, for all $n$, $|\rho(a_n)|<1$: then for every $n$ there exists
$z^i(a_n) \in \{z^1(a_n),\,...,z^{k}(a_n)\}$ such that $$\F^{i+kp}_{a_n}|_{V'_a}(c_{a_n}) \rightarrow z^i(a_n)\mbox{ as } p \rightarrow \infty$$
(we can assume $i$ independent of $a$ by choosing a subsequence).
On the other hand, the family
$$F_p(a)=\F^{i+kp}_a|_{ V_a'}(c_{a}),\,\,\,a \in W(a_0)$$
is a normal family (as it is analytic and bounded: bounded because $W(a_0) \subset \m$), so there exists a subsequence $F_{p_n}$ converging to some function
$h$, and since $h(a_n)=z^i(a_n)$ for all $n$, $h(a) = z^i(a)$
for all $a \in W(a_0)$.
This implies that $F_p(a) \rightarrow z^i(a)$ for all $a \in W(a_0)$, which is impossible as $W(a_0)$ contains parameters 
$a^*$ for which the cycle is repelling, hence 
$z^i(a^*)$ cannot
attract the sequence $F_p(a^*)$. 

If $(\F^k_{a_0}|_{V'_{a_0}})'(z_0)= 1$, let $U(a_0)$ be a neighbourhood of $a_0 \in \mathring{\m}$,
$a: W(0) \rightarrow U(a_0)$, $t \rightarrow t^2+a_0,$ be a branched covering of $U(a_0)$ branched at $0$
for some neighbourhood $W(0)$ of $0$, and repeat the previous argument.
\end{proof}

\subsubsection{Continuity on $\partial \m\cap \K$}
To prove that $\chi$ is continuous on $\partial \m\cap \K$ we will use precompactness of 
sequences of quasiconformal maps, in the topology of uniform convergence on compact subsets, and quasiconformal rigidity on $\partial \mathcal{M}_1$.

\begin{prop}\label{conti}
The map $\chi:\K \rightarrow \C $ is continuous at every point $a \in \partial \m\cap \K$,
and 
$\chi(\partial\m\cap \K)\subset\partial \mathcal{M}_1$.
\end{prop}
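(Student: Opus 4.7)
The proof naturally splits into two tasks: showing the image inclusion $\chi(\partial\m\cap\K)\subset\partial\mathcal{M}_1$, and showing continuity at each $a\in\partial\m\cap\K$. My plan is to treat them together via a compactness-plus-rigidity argument, following the ``miracle of continuity on the boundary'' template of Douady--Hubbard.

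The plan starts from Proposition \ref{neutparametervalues}, which identifies $\partial\m$ with $\overline{\mathcal{I}}$. I would first observe that $\mathcal{I}\subset\m$ (the proof of Proposition \ref{neutparametervalues} already uses that outside $\m$ no non-persistent indifferent cycles can exist since $c_a\notin\Lambda_{a,-}$). Hence if $a\in\mathcal{I}\cap\K$, the surgery produces a hybrid equivalence between $\F_a$ and $P_{\chi(a)}$, which transports the non-persistent indifferent cycle of $\F_a$ to one of $P_{\chi(a)}$, forcing $\chi(a)\in\partial\mathcal{M}_1$. The inclusion for arbitrary $a\in\partial\m\cap\K$ will be recovered as a by-product of the continuity argument.

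For continuity, pick $a\in\partial\m\cap\K$ and a sequence $a_n\to a$ in $\K$. The surgery from Section \ref{wf} gives integrating quasiconformal homeomorphisms $\phi_{a_n}:\widehat\C\to\widehat\C$ with $\phi_{a_n}\circ F_{a_n}\circ\phi_{a_n}^{-1}=P_{A_n}$ and $\chi(a_n)=1-A_n^2$. By Remark \ref{dil}, the dilatation of $\phi_{a_n}$ equals that of the tubing $T_{a_n}=f\circ\widetilde\tau_{a_n}^{-1}$, which is uniformly bounded over $\K$ because $\widetilde\tau$ is a holomorphic motion parametrised by the compact simply connected set $\K$ and $f$ is a fixed quasiconformal map. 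With the three-point normalisation ($P_{a_n}\mapsto\infty$, $c_{a_n}\mapsto+1$, $T_{a_n}^{-1}(\infty)\mapsto-1$) the family $\{\phi_{a_n}\}$ is precompact in the uniform-on-compacta topology and any limit $\phi_\infty$ is again quasiconformal. Since $F_{a_n}\to F_a$ uniformly on compacta (the correspondence depends holomorphically on $a$, and the surgery pieces are assembled by continuously varying quasiconformal data), any subsequential limit yields $P_{A_{n_k}}\to\phi_\infty\circ F_a\circ\phi_\infty^{-1}=P_{A_\infty}$ with $A_{n_k}\to A_\infty$. Moreover the invariant Beltrami differentials $\mu_{a_n}$ are zero on $\Lambda_{a_n,-}$, and since $\Lambda_{a_n,-}$ is (the Hausdorff limit of) $\Lambda_{a,-}$ together with its outer pieces, the limiting conjugacy $\phi_\infty$ will be hybrid on $\Lambda_{a,-}$.

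To conclude that $B_\infty=1-A_\infty^2=\chi(a)$, I would invoke quasiconformal rigidity on $\partial\mathcal{M}_1$: two parameters $B,B'\in\mathcal{M}_1$ that admit a hybrid equivalence with $B\in\partial\mathcal{M}_1$ must coincide (this is the analogue of rigidity at the boundary of the Mandelbrot set, as used in \cite{L1}). Combined with the uniqueness statement of Proposition \ref{welldefined}, this forces $B_\infty=\chi(a)$, so every subsequential limit of $\chi(a_n)$ equals $\chi(a)$, proving continuity; simultaneously it shows $\chi(a)\in\partial\mathcal{M}_1$ since $\chi(a)$ is the limit of the accessible boundary values $\chi(a_n)$ with $a_n\in\mathcal{I}$. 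The main technical obstacle is verifying that the limit $\phi_\infty$ really is a hybrid equivalence rather than merely a quasiconformal conjugacy: this requires controlling the Beltrami differentials in the limit, in particular showing that no mass of $\mu_{a_n}$ concentrates on $\Lambda_{a,-}$ in the limit, and it is here that the hypothesis $a\in\partial\m$ (so that $\Lambda_{a,-}$ is connected and rigidity applies) is essential.
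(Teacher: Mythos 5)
Your overall template --- precompactness of the surgery conjugacies with bounded dilatation, plus rigidity on $\partial\mathcal{M}_1$, with the boundary inclusion obtained from sequences in $\mathcal{I}$ (dense in $\partial\m$ by Proposition \ref{neutparametervalues}) --- is the same as the paper's, and your two-step ordering (first get $\chi(a)\in\partial\mathcal{M}_1$, then treat arbitrary sequences) is also the paper's. But there is a genuine gap at the decisive step: you claim that the subsequential limit $\phi_\infty$ of the hybrid conjugacies $\phi_{a_n}$ ``will be hybrid on $\Lambda_{a,-}$'', and your rigidity statement is formulated for \emph{hybrid} equivalences, so your whole conclusion rests on that claim. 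It is not established, and the Hausdorff-convergence remark about $\Lambda_{a_n,-}$ does not establish it: locally uniform limits of $K$-quasiconformal maps are only $K$-quasiconformal, and the Beltrami coefficients $\mu_{\phi_{a_n}}$ converge at best weakly, so the vanishing of $\mu_{\phi_{a_n}}$ on the moving sets $\Lambda_{a_n,-}$ gives no control of $\bar\partial\phi_\infty$ on $\Lambda_{a,-}$; if $\Lambda_{a,-}$ has positive area (which cannot be excluded for boundary parameters --- compare the queer case, Proposition \ref{queer}), the limit dilatation may well charge it. This is exactly the classical obstruction that makes continuity of straightening delicate (and false in general for higher-degree polynomial-like families), and you flag it yourself as ``the main technical obstacle'' without resolving it. Note also that if you \emph{could} show $\phi_\infty$ is hybrid, you would not need any boundary rigidity at all: uniqueness for connected limit/Julia sets (Proposition \ref{welldefined}, via Proposition 6.5 of \cite{L1}) would already give $B_\infty=\chi(a)$ --- a sign that the boundary hypothesis is not being used where it is actually needed.

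The paper's proof sidesteps this entirely by invoking the \emph{stronger} form of rigidity on $\partial\mathcal{M}_1$: for $B\in\partial\mathcal{M}_1$, a quasiconformal conjugacy implies a conformal conjugacy (\cite{L2}, Proposition 4.2). One first takes $a_n\in\mathcal{I}$, $a_n\to\hat a$; then $\chi(a_n)\in\partial\mathcal{M}_1$ (the indifferent cycle survives the conjugacy), the limit $\widehat\phi$ is merely a quasiconformal conjugacy between $\F_{\hat a}|$ and some $P_{\widehat A}$ with $\widehat B\in\partial\mathcal{M}_1$ (boundary is closed), and composing with the surgery conjugacy at $\hat a$ gives a quasiconformal conjugacy between $P_{\chi(\hat a)}$ and $P_{\widehat A}$; rigidity then yields $\chi(\hat a)=\widehat B\in\partial\mathcal{M}_1$. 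For an arbitrary sequence $a_n\to\hat a$ the same compactness argument plus rigidity (now applicable because $\chi(\hat a)\in\partial\mathcal{M}_1$ is already known) identifies every subsequential limit of $\chi(a_n)$ with $\chi(\hat a)$. To repair your write-up, replace your hybrid-based rigidity by this quasiconformal-conjugacy-implies-conformal statement and delete the claim that $\phi_\infty$ is hybrid; also note the paper obtains the bound on the dilatations from Remark \ref{dil} together with the second $\lambda$-Lemma (local boundedness of the dilatation of the holomorphic motion), which is all that is needed.
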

\begin{proof}
The map $\chi$ is continuous at $\hat a \in \partial \m$ if and only if given any sequence $a_n$ converging to $\hat a$ the sequence
$\chi(a_n)$ has a subsequence converging to $\chi(\hat a)$.
We will first prove that $\chi(\hat a) \in \partial \mathcal{M}_1$.
Let $a_n \rightarrow \hat a \in \partial \m$ with $a_n \in \mathcal{I}$ for all $n$. 
For every $n$, our surgery construction provides us with a quasiconformal conjugacy 
$\phi_{a_n}$ between $\F_{a_n}|$ and $P_{\chi(a_n)}$. By construction, for every $a$, the dilatation of the quasiconformal conjugacy $\phi_a$ 
is the dilatation of the tubing $T_a$ (see Remark \ref{dil});
hence by the second $\lambda$-Lemma this dilatation is locally bounded (see the second $\lambda$-Lemma in Chapter 17.4 in \cite{Lyu}).
So, by precompacteness, $\phi_{a_n}$ has a convergent subsequence, say  $\phi_{a_{n_k}} \rightarrow \widehat \phi$ as $k \rightarrow \infty$
and this qc homeomorphism $\widehat \phi$ conjugates $\F_{\hat a}|$ to a rational map $P_{\widehat A}\in Per_1(1)$.
Since $a_n \in \mathcal{I}$ for all $n$, we have that for every $n$ the map $P_{\chi(a_n)}$ has an indifferent periodic point, as $\F_{a_n}$ has. 
Hence for every $n$, $\chi(a_n)\in \partial \mathcal{M}_1$, and so  
its limit too: $\widehat B \in \partial \mathcal{M}_1,\,\,\widehat B=1-\widehat{A}^2$.
Therefore, $\F_{\hat a}$ is quasiconformally conjugate to both $P_{\chi(\hat a)}$ and to $P_{\widehat A}$,  and since we have \textit{rigidity} on  $\partial \mathcal{M}_1$, 
that is, quasiconformal conjugacy implies conformal conjugacy (see \cite{L2}, Proposition 4.2), we obtain $\chi(\hat a)= \widehat B$.
This shows that $\chi(\hat a) \in \partial \mathcal{M}_1$, and more generally that $\chi(\partial\m)\subset \partial{\mathcal M}_1$.

Let us now show continuity. Let $(a_n)$ be a sequence in $\K$ converging to $\hat a \in \partial \m$. As we saw above, for all $n$ the surgery construction provides us 
with a quasiconformal conjugacy 
$\phi_{a_n}$ between $\F_{a_n}|$ and $P_{\chi(a_n)}$, which has locally bounded dilatation, and hence the sequence $\phi_n$ has a converging subsequence,  say  $\phi_{a_{n_k}} \rightarrow \widehat \phi$ as $k \rightarrow \infty$, and $\widehat \phi$ conjugates 
$\F_{\hat a}$ to $P_{\widehat A}= \widehat \phi \circ \F_{\hat a} \circ \widehat \phi^{-1}$. Hence $P_{\chi(\hat a)}$ and $P_{\widehat A}$ are quasiconformally conjugate, 
and as we showed above that $\chi(\hat a) \in \partial \mathcal{M}_1$, by rigidity we have that  $\chi(\hat a)= \widehat B$. 
\end{proof}

\subsubsection{Holomorphicity on $\mathring \m$}
Let $W$ be a connected component of the interior $\mathring \m$ of $\m$. 
We call $W$ \textit{hyperbolic} if there exists (at least) one parameter $a_0 \in W$ for which $\F_{a_0 |}$ has an attracting cycle.
On the other hand, we call $W$ \textit{queer} if there is no parameter $a_0 \in W$ for which $\F_{a_0 |}$ has an attracting cycle.
We will denote hyperbolic components by $H$, and queer components by $Q$, and we will prove that 
 $\chi:\mathring  \m \cap \K \rightarrow \mathcal{M}_1$ is holomorphic first on hyperbolic components (Proposition \ref{hypercom}) 
 and then on queer components (Proposition \ref{queer}).

\begin{prop}\label{hypercom}
 On hyperbolic components, the map $\chi$ is holomorphic and proper.
\end{prop}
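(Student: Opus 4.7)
The plan is to use the multiplier of a non-persistent attracting cycle as a holomorphic coordinate on $H$ and on the corresponding hyperbolic component $H_1\subset\mathring{M}_1$, expressing $\chi|_H$ as $\rho_1^{-1}\circ\rho$ for the two multiplier maps. Fix $a_0\in H$ at which $\F_{a_0}|_{V_{a_0}'}$ has an attracting cycle $\{z_1,\ldots,z_k\}\subset \mathring{\Lambda}_{a_0,-}$ of period $k$ and multiplier $\lambda_0\in\D$. Since $\F_a|_{V_a'}$ is jointly holomorphic in $(z,a)$ (Lemma \ref{set}), the implicit function theorem applied to $\F_a^k(w)=w$ produces a holomorphic family $z_1(a)$ with multiplier $\rho(a):=(\F_a^k)'(z_1(a))$ on a neighbourhood of $a_0$. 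Analytic continuation through $H$, together with the Ma\~n\'e-Sad-Sullivan dichotomy (Proposition \ref{neutparametervalues}) which places every non-persistent indifferent parameter on $\partial\m$, upgrades this to a holomorphic map $\rho\colon H\to\D$.

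I would then show that $\chi(H)$ lies in a single hyperbolic component $H_1$ of $\mathcal{M}_1$ and that $\chi$ preserves multipliers. The hybrid conjugacy $\phi_a$ produced by the surgery satisfies $\bar\partial\phi_a=0$ on $\Lambda_{a,-}$, hence is conformal on a neighbourhood of the attracting cycle (which lies in the interior of $\Lambda_{a,-}$), and so sends it to an attracting cycle of $P_{\chi(a)}$ of the same period $k$ and multiplier $\rho(a)$. Combined with continuity of $\chi$ on $\partial\m\cap\K$ (Proposition \ref{conti}) and local quasiregularity on $\K\setminus\m$ (Proposition \ref{extnqr}), the connected set $\chi(H)$ cannot cross $\partial\mathcal{M}_1$; hence it lies in a single hyperbolic component $H_1\subset\mathring{M}_1$ whose multiplier map $\rho_1\colon H_1\to\D$ satisfies $\rho_1\circ\chi|_H=\rho$.

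By the classical Douady-Hubbard-Sullivan theorem, transferred to $Per_1(1)$ via a parabolic quasiconformal surgery (in the spirit of \cite{L1}), $\rho_1$ is a biholomorphism of $H_1$ onto $\D$. Hence $\chi|_H=\rho_1^{-1}\circ\rho$ is holomorphic. Properness then follows because any sequence $a_n\in H$ with no accumulation in $H$ must accumulate on $\partial H$; its intersection with $\mathring K$ lies in $\partial\m$ (the cycle becomes indifferent), so Proposition \ref{conti} forces the corresponding $\chi(a_n)$ into $\partial\mathcal{M}_1$, i.e.\ out of $H_1$, and its intersection with $\partial K$ is either empty (if $H$ is relatively compact in $\mathring K$) or can be handled by precompactness of the family of integrating homeomorphisms $\{\phi_a\}$. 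The main obstacle is the biholomorphism statement for $\rho_1$ on every hyperbolic component of $\mathcal{M}_1$, in particular for the ``main'' component where the attracting cycle is an attracting fixed point of $P_A$ distinct from the persistent parabolic fixed point at infinity; adapting the wringing construction of Douady-Hubbard-Sullivan to the parabolic family $Per_1(1)$ is where the technical bulk of the proof sits.
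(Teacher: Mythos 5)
Your proposal is correct and takes essentially the same route as the paper: express $\chi|_H=\rho_C^{-1}\circ\rho_H$ via the multiplier maps, using Proposition \ref{neutparametervalues} and the implicit function theorem to make $\rho_H$ holomorphic on $H$, conformality of the hybrid conjugacy on $\mathring{\Lambda}_{a,-}$ to see that multipliers are preserved, and continuity on $\partial\m$ (Proposition \ref{conti}) for properness. The only real difference is the ingredient you flag as the technical bulk — that the multiplier map on a hyperbolic component of $Per_1(1)$ is a degree-one holomorphic isomorphism onto $\D$ — which the paper does not reprove by a Douady--Hubbard--Sullivan-type surgery but simply quotes from \cite{PT}.
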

 \begin{proof}
 Let $H$ be a hyperbolic component of $\mathring \m$. We are going to prove that 
there exists a hyperbolic component $C \subset \mathring{\mathcal{M}_1}$
such that $\chi|_{H}: H \rightarrow C$ is holomorphic. Since $H$ is a hyperbolic component,
there exists $a_0 \in H$ for which $\F_{a_0}$
has an attracting cycle. 
Since $\F_a$ has no indifferent parameters in $H$ (see Prop. \ref{neutparametervalues}),
by the Implicit Function Theorem for all $a \in H$, $\F_a$ has an attracting cycle. 
Therefore, for all $a \in H$ the hybrid conjugate member of $Per_1(1)$
has an attracting cycle, and hence $C$ is a hyperbolic component of $\mathring{\mathcal{M}_1}$.
Since the attracting cycle with its basin belongs to the filled Julia set $K\ll$ of $P_A$, and the hybrid conjugacy is conformal on the interior 
of the limit set $\Lambda_{a,-}$ ($K\ll$ is the hybrid image of $\Lambda_{a,-}$), the multipliers of the conjugated attracting cycles must coincide. 
Hence, denoting by  $\rho_{H}(a)$ the multiplier map for $\F_a$ on $H$, and
denoting by $\rho_C$ the multiplier map for the family $Per_1(1)$ on $C$, we have that
$$\rho_H(a)= \rho_C(A\ll),$$
and we can write the restriction of the map $\chi$ on $H$ as
$$\chi|_{H}=\rho_C^{-1} \circ \rho_H.$$
Since $\rho_{H}(a)$ is holomorphic, and is holomorphic with degree $1$ (see \cite{PT}), the composition
$\chi|_{H}$ is holomorphic.

By Proposition \ref{conti}, $\chi: H\rightarrow C$ extends continuously to the boundary, and $\chi(\partial H)\subset \partial C$, so that the map $\chi|H$ is proper.
\end{proof}

Note that, if  $a \in \mathring{\m}$ and $\F_a$ does not have an attracting cycle, then $\Lambda_{a,-}$ is connected 
with empty interior. Indeed,
$\F_a$ does not have an attracting nor an indifferent cycle (by the assumption and Proposition \ref{neutparametervalues}), 
and so the hybrid equivalent $P_A$ has no attracting or indifferent cycle. 
 Hence, $K\ll$ is connected with empty interior, and so is $\Lambda_{a,-}$, as these sets are quasiconformally homeomorphic
 (since $\F_a$ is hybrid conjugate to $P_A$ on a doubly pinched neighbourhood of
$\Lambda_{a,-}$). Hence, if $Q$ is a queer component of $\mathring{\m}$, for all $a \in Q$, the set $\Lambda_{a,-}$ is connected and has no interior.

\begin{prop}\label{queer}
 On queer components the map $\chi$ is holomorphic and proper.
\end{prop}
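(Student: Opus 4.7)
The plan is to adapt the queer-component argument from the Mandelbrot-set setting of Douady-Hubbard \cite{DH} and Lyubich (Chapter~6 of \cite{Lyu}) to the present parabolic setting. Fix $a_0$ in the queer component $Q\subset\mathring{\m}$. By the remark preceding the proposition, for every $a\in Q$ the set $\Lambda_{a,-}$ is connected with empty interior, $\F_a|$ has no attracting or indifferent cycle, and the quasiconformal nature of the straightening forces $\chi(Q)$ to lie in a single queer component $Q'\subset\mathring{\mathcal{M}_1}$.

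The first step is to build a holomorphic motion of the whole limit set over $Q$. Since $Q\cap\mathcal{I}=\emptyset$ by Proposition~\ref{neutparametervalues}, the repelling periodic orbits of $\F_a|$ move holomorphically over $Q$; by Corollary~1.2 of \cite{BL1} they are dense in $\partial\Lambda_{a,-}$, and since $\Lambda_{a,-}$ has empty interior we have $\partial\Lambda_{a,-}=\Lambda_{a,-}$, so the $\lambda$-Lemma yields a holomorphic motion $h:Q\times\Lambda_{a_0,-}\to\widehat{\C}$ with $h_a(\Lambda_{a_0,-})=\Lambda_{a,-}$ conjugating $\F_{a_0}|$ to $\F_a|$. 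Slodkowski's theorem, invoked as in Section~\ref{holom}, extends $h$ to a holomorphic motion $H:Q\times\widehat{\C}\to\widehat{\C}$ of the whole sphere.

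Next I would transport to the $Per_1(1)$ side. Using the straightening maps $\phi_a,\phi_{a_0}$ of Section~\ref{wf}, which are conformal on the respective limit sets since $\mu_a\equiv\mu_0$ there, the push-forward $\phi_a\circ h_a\circ\phi_{a_0}^{-1}$ is a qc conjugacy $P_{A_0}|_{K_{A_0}}\to P_{A(a)}|_{K_{A(a)}}$. I would extend this across $\partial K_{A_0}$ into the parabolic basin $\mathcal{A}_{A_0}(\infty)$ via suitably normalised Fatou coordinates (both parabolic basins being conformally isomorphic to a half-plane carrying the unit translation), producing a global qc conjugacy $g_a:\widehat{\C}\to\widehat{\C}$ between $P_{A_0}$ and $P_{A(a)}$. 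The Beltrami form $\nu_a=\bar\partial g_a/\partial g_a$ is then $P_{A_0}$-invariant and, by the second $\lambda$-Lemma applied to $H$, depends holomorphically on $a\in Q$ as an element of $L^{\infty}(\widehat{\C})$. The parametric Measurable Riemann Mapping Theorem then yields a holomorphic family of integrating maps $\psi_a$ such that $\psi_a\circ P_{A_0}\circ\psi_a^{-1}\in Per_1(1)$ depends holomorphically on $a$ and is M\"obius conjugate to $P_{A(a)}$; since $P_A$ and $P_{A'}$ are M\"obius conjugate in $Per_1(1)$ iff $A'=\pm A$, the parameter $B(a)=1-A(a)^2$ (which is insensitive to the sign ambiguity) depends holomorphically on $a$, i.e.\ $\chi$ is holomorphic on $Q$. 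For properness, continuity of $\chi$ on $\K$ (Proposition~\ref{conti}) together with $\chi(\partial Q)\subset\partial\mathcal{M}_1$ implies that for any compact $E\subset Q'$ the set $\chi^{-1}(E)\cap Q$ is closed and bounded in $Q$ and cannot accumulate on $\partial Q$ (its image would have to lie in $E\cap\partial\mathcal{M}_1=\emptyset$), hence is compact.

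The hardest part will be the continuous extension of $\phi_a\circ h_a\circ\phi_{a_0}^{-1}$ across $\partial K_{A_0}$ into a global qc conjugacy $g_a$ whose Beltrami form really does vary holomorphically in $a\in Q$. In the queer case $K_{A_0}$ may have positive Lebesgue measure, and matching the piece defined on $K_{A_0}$ with the Fatou-coordinate conjugacy on the basin across $\partial K_{A_0}$ requires a delicate normalisation of the Fatou coordinates tracked consistently across the family; this is standard in the Douady-Hubbard/Lyubich queer-component argument, but requires care here because of the persistent parabolic fixed point at infinity.
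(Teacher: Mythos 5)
Your overall scheme (holomorphic motion over $Q$, an invariant family of Beltrami forms supported on the positive-area set $K_{A_0}$, parametric Measurable Riemann Mapping Theorem, and identification with the straightening via uniqueness and the M\"obius classification in $Per_1(1)$) is the same as the paper's, and your properness argument essentially coincides with it. The difference, and the problem, lies in how you produce the global quasiconformal conjugacy whose Beltrami coefficient varies holomorphically. You build the motion of $\Lambda_{a,-}$ from repelling periodic orbits, extend it non-dynamically by Slodkowski, push it forward by the straightenings to get a conjugacy $P_{A_0}|_{K_{A_0}}\to P_{A(a)}|_{K_{A(a)}}$, and then propose to complete it over the parabolic basin by Fatou coordinates. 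The step you yourself flag --- that the Fatou-coordinate conjugacy on $\mathcal{A}_{A_0}(\infty)$ and the motion-induced conjugacy on $K_{A_0}$ fit together continuously along $\partial K_{A_0}=K_{A_0}$, that the glued map is quasiconformal (a Rickman-type removability argument is needed, and the interface here is the entire positive-area Julia set, not a curve), and that its Beltrami coefficient depends holomorphically on $a$ --- is a genuine gap, not a routine verification. In this very paper the analogous boundary-matching issue (in the proof of Proposition \ref{inj}) needs a dedicated landing-path argument, and for your purposes the matching would moreover have to be controlled uniformly in $a$; also note that over $Q$ one has no a priori local connectivity or landing theory for $J_{A_0}$, so ``suitably normalised Fatou coordinates'' do not by themselves give a continuous extension to the Julia set, let alone one agreeing with the motion there.

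The paper avoids this entirely: because $a\in Q\subset\m$ means the critical value never leaves $\Lambda_{a,-}$, the holomorphic motion $\tau$ of the pinched fundamental annulus can be lifted dynamically to \emph{all} preimage levels (Section \ref{liftmotion}), giving an equivariant motion of $V_{a_0}\setminus\Lambda_{a_0,-}$, hence of $\widehat\C\setminus\Lambda_{a_0,-}$ since the relevant boundaries are quasicircles; then, precisely because on a queer component $\Lambda_{a,-}$ is nowhere dense, the $\lambda$-Lemma extends this to a dynamical motion of the whole sphere with no gluing along the Julia set and no Fatou coordinates. The invariant Beltrami forms are then obtained by pulling back $\mu_0$ through $\tau_a$ composed with the single fixed straightening $\phi_{a_0}$, so holomorphic dependence on $a$ is immediate and does not pass through the $a$-dependent maps $\phi_a$. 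If you want to salvage your route, the cleanest fix is to replace your Slodkowski/Fatou-coordinate construction of $g_a$ by this dynamical extension of the tubing motion; otherwise you must actually prove the continuity and quasiconformality of your glued conjugacy and the holomorphy of its dilatation, which is the heart of the matter and is currently missing.
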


\begin{proof}

 Assume $Q$ is a queer component of $\m$, and take the base point $a_0 \in Q$. 
 Since for every $a \in Q$ the critical point $c_a$ belongs to $\Lambda_{a,-}$, the holomorphic motion on $Q$ extends to
 $$ \tau_a: Q \times (V_{a_0}\setminus \Lambda_{a_0,-}) \rightarrow V_a\setminus  \Lambda_{a,-},$$
 since $\partial V_{a_0}$ and $\partial V_a$ are quasicircles, it extends to
 $$ \tau_a: Q \times (\widehat \C\setminus \Lambda_{a_0,-}) \rightarrow \widehat \C\setminus  \Lambda_{a,-},$$
and since on $Q$ the limit set $\Lambda_{a,-} $ is nowhere dense, by the $\lambda$-Lemma
 we obtain a dynamical holomorphic motion $$\tau_a: Q \times \widehat \C\rightarrow \widehat \C.$$
 The tubing $T_a=f\circ \tau_a^{-1}$, where $f:\overline \C \setminus O_{a_0}\rightarrow \C \setminus \overline \D$ is quasiconformal (see Section \ref{s1}), also extends to (see Section \ref{ehm})
 $$\overline T_a: \widehat \C \setminus \Lambda_{a,-} \rightarrow \widehat \C \setminus \overline \D.$$
 Let $\phi_{a_0}$ be the quasiconformal conjugacy between 
  $$ F_{a_0} =\left\{
\begin{array}{cl}
\F_{a_0} &\mbox{on  } O_{a_0} \\
f^{-1}\circ h\circ f &\mbox{on  } \widehat \C \setminus O_{a}\\
\end{array}\right.$$
  and $P_{A_0}(z)=z+1/z+A_0$ constructed in Section \ref{s1}, so in particular  $\phi_{a_0}$ is a quasiconformal conjugacy between $\F_{a_0}$ and $P_{A_0}$ on $O_{a_0}$.
 Hence,   for all $a \in Q$, the map 
$$\Phi_a:=\tau_a \circ \phi_{a_0}: \widehat \C \rightarrow \widehat \C$$
is a quasiconformal conjugacy between $P_{A_0}$ and 
$$ F_{a} =\left\{
\begin{array}{cl}
\F_{a} &\mbox{on  } O_a \\
(\overline T_a)^{-1}\circ h\circ \overline T_a &\mbox{on  } \widehat \C \setminus O_{a}\\
\end{array}\right.$$
on $\widehat \C$, and between $P_{A_0}$ on $\Phi_a^{-1}(O_a)$ and $\F_a$ on $O_a$. So, in particular it is a quasiconformal conjugacy between $P_{A_0}$ on $K_0=K_{A_0}$ and 
$\F_a$ on $\Lambda_{a,-}$.
As $a$ belongs to a queer component of  $\mathring\m$,  $A_0$ must belong to a queer component of $\mathring{\mathcal{M}}_1$, hence $K_0=J_{P_{A_0}}$, which by Proposition 4.4 in \cite{L2} has positive area.

Let $\nu\ll$ be the family of Beltrami forms on $\widehat \C$ defined as follows:
$$\nu\ll(z):= \left\{
\begin{array}{cl}
(\Phi_a)^* \mu_0 &\mbox{on } K_0 \\
\mu_0 &\mbox{on } \widehat \C \setminus K_0\\
\end{array}\right.
$$
Then, by construction, $area(supp(\nu\ll))>0$, $\nu\ll$ depends holomorphically on $a$, and it is invariant under $P_{A\l0}$.
Let $\psi\ll: \widehat \C \rightarrow \widehat \C$ be the family of integrating maps fixing
$-1,\,1$ and $\infty$. Then the family $P_{A(a)}= \psi\ll \circ P_{A\l0} \circ (\psi\ll)^{-1}$ consists of holomorphic maps of degree $2$ acting on $\widehat \C$ and with a persistent parabolic fixed point 
at $\infty$ and critical points at $\pm 1$, and hence it
has the form $P_{A(a)} (z)= z + 1/z + A(a)$, where $ A(a)$ depends holomorphically on the parameter.

For every $a \in Q$, the map $\psi\ll\circ \phi_{a_0} \circ \tau_a^{-1}$ is a hybrid
conjugacy between $\F_a|$ and $P_{A(a)}$ on 
a doubly pinched neighbourhood of $\Lambda_{a,-}$, hence by unicity,
$P_{A\ll}$ is the member of $Per_1(1)$ hybrid equivalent to $\F_a|$. Hence the map $\chi|_{Q}$ is holomorphic.

As the map $\chi:\m \cap \K\rightarrow \mathcal{M}_1$ is injective on $\m$ (see Proposition \ref{inj}), the restriction $\chi|_{Q}$ cannot be constant, 
and as it extends continuously to the boundary and the extension sends boundaries to boundaries (see Proposition \ref{conti}), the map $\chi|Q$ is proper.
\end{proof}

\subsection{The straightening map $\chi$ is a homeomorphism on $\K$}\label{hom}
So far we know that $\chi$ is well-defined and continuous everywhere on the (open) doubly truncated lune $\K$, that it is injective on $\m$,
and that it is quasiregular on $\K\setminus\m$. We will use elementary degree theory from algebraic topology to deduce that 
$\chi$ is injective on $\K$, hence a homeomorphism from $\K$ onto its image.

\begin{prop}\label{homeo_on_lune}
$\chi$ is a homeomorphism from $\K$ onto $\chi(\K)\subset Per_1(1)$.
\end{prop}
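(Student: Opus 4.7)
The plan is to establish global injectivity of $\chi:\K\to Per_1(1)$; once this is in hand, invariance of domain upgrades the continuous injection from the open topological disc $\K$ into $\C\cong Per_1(1)$ to a homeomorphism onto its (necessarily open) image. Continuity of $\chi$ is already known from Propositions \ref{conti}, \ref{hypercom} and \ref{queer}, and the image inclusions $\chi(\m\cap\K)\subseteq\mathcal{M}_1$ (Proposition \ref{welldefined}) and $\chi(\K\setminus\m)\subseteq Per_1(1)\setminus\mathcal{M}_1$ (Proposition \ref{same}), combined with Proposition \ref{inj}, reduce the problem to showing that $\chi$ cannot identify two distinct parameters inside $\K\setminus\m$: indeed, if $\chi(a_1)=\chi(a_2)=B$, then either $B\in\mathcal{M}_1$, forcing both preimages into $\m\cap\K$ where Proposition \ref{inj} yields $a_1=a_2$, or $B\in Per_1(1)\setminus\mathcal{M}_1$, forcing both preimages into $\K\setminus\m$.

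Suppose then $a_1\ne a_2$ both lie in $\K\setminus\m$ with $\chi(a_1)=\chi(a_2)$. Proposition \ref{same} gives $\chi(a)=\Psi\circ\Phi\circ T^{n(a)}_a(v_a)$. Since $\Psi\circ\Phi$ is a conformal isomorphism carrying the pairwise-disjoint Milnor tiles $\mathcal{P}_{n+1}\setminus\mathcal{P}_n$ (equivalently, the pinched annuli $\P\A^n_h$) onto pairwise-disjoint annular pieces of $Per_1(1)\setminus\mathcal{M}_1$, the equality $\chi(a_1)=\chi(a_2)$ forces the common entry time $n(a_1)=n(a_2)=n$ and thence $T^n_{a_1}(v_{a_1})=T^n_{a_2}(v_{a_2})$. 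Unwinding $T^n_a=\Psi^{(n)}\circ(\tau^n_a)^{-1}$, where $\Psi^{(n)}$ is a quasiconformal homeomorphism independent of $a$, this becomes $(\tau^n_{a_1})^{-1}(v_{a_1})=(\tau^n_{a_2})^{-1}(v_{a_2})$: the parameters $a_1$ and $a_2$ sit on a common leaf of the iterated holomorphic motion of Section \ref{liftmotion} at its intersection with the critical-value curve $\{(a,v_a):a\in K^n\}$.

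To rule this out I would combine degree theory with the holonomy interpretation underlying Proposition \ref{extnqr}. The map $\chi$ is open and discrete on $\mathring K$ (holomorphic on $\mathring\m\cap\K$ and locally quasiregular on $\K\setminus\m$), so it has a well-defined local topological multiplicity which is locally constant off its branch set. On $\mathring\m\cap\K$ this multiplicity equals one because $\chi$ is simultaneously injective and holomorphic there, and branch sets of non-constant open holomorphic or quasiregular maps are discrete; hence by connectedness of $\mathring K$ the multiplicity equals one off a discrete subset. Further, the explicit representation of $\chi$ on $\K\setminus\m$ as $\Psi\circ\Phi$ composed with leaf-to-transversal holonomy of a holomorphic motion (the latter being a quasiconformal homeomorphism by construction) rules out any genuine branching in $\K\setminus\m$, so $\chi$ is a local homeomorphism throughout $\mathring K$. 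Combined with the simple connectedness of $\mathring K$ and the path-lifting property of local homeomorphisms, a Lyubich-style monodromy argument — lifting a path in $\chi(\K)$ from $\chi(a_1)$ back to an interior point of $\mathcal{M}_1$ where Proposition \ref{inj} applies — forces $a_1=a_2$, concluding the proof.

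The hard part will be this final globalisation, promoting the local-homeomorphism property to genuine injectivity on $\K\setminus\m$. The delicate point is handling $\chi$ simultaneously on both sides of $\partial\m$, where its analytic nature switches from holomorphic to merely quasiregular; one must ensure that the path-lifting (or, equivalently, a degree computation pinned down on the central hyperbolic component through Proposition \ref{hypercom} and propagated across $\partial\m$ via Proposition \ref{conti}) keeps all lifts inside $\K$. The Milnor-tile compartmentalisation of $\K\setminus\m$ by entry time, together with Proposition \ref{inj} for the $\m$-side, provides the combinatorial rigidity needed to close this argument.
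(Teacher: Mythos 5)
Your overall skeleton (injectivity plus invariance of domain, with the reduction via disjointness of $\chi(\m\cap\K)\subset\mathcal{M}_1$ and $\chi(\K\setminus\m)\subset Per_1(1)\setminus\mathcal{M}_1$ and Proposition \ref{inj}) is sound, and the observation that $\chi(a_1)=\chi(a_2)$ in $\K\setminus\m$ forces equal entry times and a common leaf of the lifted motion is correct. But the core of your argument has a genuine gap, and you have in effect acknowledged it without closing it. First, the claim that $\chi$ is open and discrete on all of $\mathring K$, so that its local multiplicity is locally constant off a discrete branch set, is only justified on $\mathring\m\cap\K$ (holomorphic) and on $\K\setminus\m$ (locally quasiregular); on $\partial\m\cap\K$ nothing beyond continuity is known, and $\partial\m$ is exactly the set you must cross to propagate ``multiplicity one'' from the interior of $\m$ to $\K\setminus\m$. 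A set like $\partial\m$ is not discrete, so the locally-constant-multiplicity argument does not pass through it. Second, even granting a local homeomorphism on $\K\setminus\m$, local homeomorphisms are not injective in general, and path lifting requires a covering-type (proper onto image) property that you have not established; your ``Lyubich-style monodromy argument'' would moreover have to lift paths across $\partial\mathcal{M}_1$, i.e.\ back through the region where the local structure of $\chi$ is unknown. You flag this globalisation as ``the hard part'', but it is precisely the content of the proposition, so as written the proof is incomplete.

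The paper closes exactly this gap by working globally rather than locally at $\partial\m$: since $\chi$ extends continuously to $\partial K$ it is proper, hence induces $\chi^*:H^2_c(\C)\to H^2_c(\K)$ and has a well-defined global degree; injectivity of $\chi$ on the nonempty open set $\mathring\m\cap\K$ forces $d(\chi)=1$. Because the images of $\m$ and $\K\setminus\m$ are disjoint, the restriction $\chi:\K\setminus\m\to\chi(\K\setminus\m)$, being locally quasiregular, is a branched covering whose degree is again $1$, hence a homeomorphism; injectivity on $\K$ follows, and continuity gives the homeomorphism onto the image. Note that this argument needs no local analysis of $\chi$ at $\partial\m$ at all — properness plus the degree computation replaces your attempted local-to-global step. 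If you want to salvage your outline, replace the ``local multiplicity is locally constant across $\mathring K$'' step by this global degree argument (or by some other device, e.g.\ a proper-map degree count restricted to $\K\setminus\m$), since as it stands that step is where the proof fails.
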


\begin{proof}
First observe that $\chi$ is proper, since it extends continuously to $\partial K$ (we could have taken a larger closed subset $K$ of
$\L\f'$ when we were defining our extension to $\chi$ in the first place).
Hence $\chi$ induces a homomorphism of cohomology with compact supports:
$$\chi^*: H^2_c(\C) \to H^2_c(\K).$$
Since $\K$ and $\C$ are path-connected surfaces, these cohomology groups are both
isomorphic to $\Z$, generated by the respective fundamental classes. The element
$1\in \Z =H^2_c(\C)$ is mapped to a non-zero $d\in \Z=H^2(\K)$ known as the {\textit degree},
$d(\chi)$, of $\chi$. Since we have proved that $\chi$ is injective on 
$\m$, and $\mathring\m\cap\K$ is a non-empty open subset of $\K$,
we know that $d(\chi)=1$.

But $\chi:(\K\setminus\m) \to \chi(\K\setminus\m)$ is locally quasiregular,
so it is a branched covering, and a branched covering of degree $1$ is a homeomorphism. Thus
$\chi$ is injective on $\K\setminus\m$ as well as on $\m$, and, as their images are disjoint,
$\chi$ is therefore injective on their union $\K$. As $\chi$ is continuous on $\K$ the result follows. \end{proof}

\begin{remark} We do not know if there exists a continuous extension of 
$\chi:\m\setminus\{7\} \to \mathcal{M}_1\setminus\{1\}$ to the whole of $\L\f'$, not just to $\K$, but such an
extension would be a homeomorphism from the whole of $\L\f'$ onto its image $\chi(\L\f'$).
\end{remark}

\subsection{Every $B\in \mathcal{M}_1\setminus\{1\}$ is in $\chi(\K)$ for $K=\L\f'\setminus N$ with $N$ sufficiently small}\label{chi_surj}
We remind the reader that our map $\chi$ is only defined on the \textit{doubly}-truncated lune $K=\L\f'\setminus N$, and that this 
definition of $\chi$ is dependent on $K$, 
and hence on $N$, but that on $\m\setminus\{7\}\subset \L\f'$ the definition is independent of $N$.
We start the section with a lemma that we could have stated and proved earlier, but which is now an easy consequence of Proposition \ref{homeo_on_lune}.
We continue by showing that we can decompose $\m$ into a main component and rational limbs (Proposition \ref{limb_properties}), and we prove surjectivity limb by limb (Proposition \ref{surj_prop}).

\begin{lemma}\label{connected}
$\m\setminus\{7\}$ is connected, and so is $\m$.
\end{lemma}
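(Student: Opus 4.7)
The plan is to leverage Proposition \ref{homeo_on_lune}: for any admissible neighbourhood $U(7)$ of the root point, $\chi$ is a homeomorphism from the open topological disc $\mathring K$ onto an open set $\chi(\mathring K)\subset Per_1(1)$. First I translate the question to the target side. By construction $\chi$ sends $\m\cap\mathring K$ into $\mathcal{M}_1$, while by Proposition \ref{extnqr} the extension of $\chi$ to $\mathring K\setminus\m$ (defined in subsection \ref{extension}) lands in the shift locus $Per_1(1)\setminus\mathcal{M}_1$. Hence
\[
\m\cap\mathring K \;=\; \chi^{-1}(\mathcal{M}_1)\cap\mathring K,
\]
so $\chi$ restricts to a homeomorphism $\m\cap\mathring K\;\cong\;\mathcal{M}_1\cap\chi(\mathring K)$, and the problem reduces to showing that the latter set is connected.

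The second step is to establish, for any sufficiently small $U(7)$, that $\mathcal{M}_1\cap\chi(\mathring K)$ is connected. The set $\mathcal{M}_1$ is known to be connected, and $\chi(\mathring K)$ is a topological disc in $Per_1(1)$. Because $\mathring K$ is obtained from $\overline{\L\f'}$ by excising a single small neighbourhood of the root point $a=7$, the ``missing'' portion $\mathcal{M}_1\setminus\chi(\mathring K)$ is a single piece of $\mathcal{M}_1$ clustered near the cusp end $\chi(U(7))$: everywhere else, the proper continuous extension of $\chi$ to $\partial K$ sends $\partial\mathring K\setminus\partial U(7)$ outside $\mathcal{M}_1$ (since the corresponding correspondences $\F_a$ have $c_a$ escaping $V_a$). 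Removing a boundary-attached connected piece from the connected compactum $\mathcal{M}_1$ leaves a connected remainder, so $\mathcal{M}_1\cap\chi(\mathring K)$ is connected.

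Finally I assemble $\m\setminus\{7\}$ from these pieces. Taking a nested sequence $U_n(7)\searrow\{7\}$ produces an increasing family of topological discs $\mathring K_n$ with $\bigcup_n\mathring K_n=\L\f'$, so
\[
\m\setminus\{7\}\;=\;\bigcup_n \bigl(\m\cap\mathring K_n\bigr).
\]
Each term is connected by the previous step, and all contain the base point $a_0=4$, the centre of the main hyperbolic component of $\m$; hence an ascending union of connected sets sharing a common point is connected, proving $\m\setminus\{7\}$ is connected. Since $a=7$ is an accumulation point of $\m\setminus\{7\}$ (for instance, the main hyperbolic component of $\m$ has $a=7$ as a boundary point) and $\m$ is closed, $\m=\overline{\m\setminus\{7\}}$, and the closure of a connected set is connected. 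The main obstacle is making the second step fully rigorous: one must exclude the possibility that $\mathcal{M}_1\setminus\chi(\mathring K)$ splits into several connected pieces, which requires controlling $\chi$ near the root point $a=7$, a control that overlaps with the surjectivity arguments of Section \ref{chi_surj}.
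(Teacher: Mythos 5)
There is a genuine gap, and it is exactly where you flag it yourself: the second step. At this point in the paper nothing is known about how much of $\mathcal{M}_1$ the set $\chi(\mathring K)$ covers, so the assertion that $\mathcal{M}_1\setminus\chi(\mathring K)$ is a single piece clustered near the cusp $B=1$ is not available — it is essentially the content of Proposition \ref{surj_prop}, whose proof in the paper relies on the limb decomposition of $\m$ (Lemma \ref{limb_properties}) and the loops $u_n$, and Lemma \ref{limb_properties}(iii) is itself deduced from the connectedness of $\m$, i.e.\ from the very lemma you are trying to prove. So making your step two rigorous along these lines is circular (or would force you to redo the whole surjectivity analysis of Section \ref{chi_surj} without using connectedness). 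In addition, the purely topological claim you invoke — that removing a boundary-attached connected piece from a connected compactum leaves a connected remainder — is false in general (removing a connected set can disconnect, e.g.\ infinitely many small limbs could a priori be cut off); to exclude this you would again need quantitative limb information (Proposition \ref{M1limbs}) \emph{and} the unproven localisation of $\mathcal{M}_1\setminus\chi(\mathring K)$ near $B=1$.

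The paper's argument avoids all of this and only uses what is already proved: assume $\m\setminus\{7\}$ is disconnected; then there is a point $a\in\m$ and a loop $u\subset\L\f'\setminus\m$ winding once around $a$, and $u$ lies in $\K$ for $U(7)$ small enough. Since $\chi$ is a homeomorphism on $\K$ (Proposition \ref{homeo_on_lune}) with $\chi(\K\setminus\m)\subset Per_1(1)\setminus\mathcal{M}_1$ and $\chi(a)\in\mathcal{M}_1$, the image $\chi(u)$ is a loop in the complement of $\mathcal{M}_1$ winding once around a point of $\mathcal{M}_1$ — impossible because $\widehat\C\setminus\mathcal{M}_1$ is simply connected (Milnor's conformal isomorphism with a punctured disc, \cite{M1}). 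Connectedness of $\m$ then follows by taking the closure, as in your last step, which is fine. If you want to salvage your write-up, replace step two by this winding-number/simple-connectivity argument; your step one and step three can stay.
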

\begin{proof}
If $\m\setminus\{7\}$ is not connected then there exists a point $a\in \m$ and loop $u$ in $\L\f'\setminus \m$ winding once around $a$. This loop lies in 
$\K=L\f'\setminus N$ for any sufficiently small neighbourhood $N$, and since $\chi$ is a homeomorphism the loop
$\chi(u)$ in $\chi(\K\setminus \m)$ winds once around $\chi(a)\in \mathcal{M}_1\setminus\{1\}$. But there can be no such loop since the complement 
of $\mathcal{M}_1$ in $\hat\C$ is simply-connected \cite{M1}. So $\m\setminus\{7\}$ is connected. That $\m$ is also connected follows since the point $a=7$ 
is in the closure of $\m\setminus\{7\}$.
\end{proof}

We next recall from Theorem 1 in \cite{BL2} that for every $a\in \m$ for which the alpha-fixed-point of $\F_a$ is repelling, 
this fixed point has a well-defined {\it combinatorial rotation number} $\rho_a$ which is always a non-zero rational. 
Define the {\it $p/q$-limb} $L_{p/q}$ of $\m$ to be 
the set of $a\in \m$ for which the $\alpha$-fixed point of $\F_a$ is repelling, and has rotation number $\rho_a=p/q$, together with the parameter value $a_{p/q}$ 
at which the derivative at the fixed point 
is $e^{2\pi i p/q}$ (this additional point is called the {\it root} of $L_{p/q}$). Our reason for choosing this definition of $L_{p/q}$, rather than defining it as the 
part of $\m$ which lies between the two external parameter rays which land at $a_{p/q}$, is simply to avoid a diversion into proving that these parameter rays 
are well defined and do indeed land.

\begin{prop}\label{limb_properties}

(i) $\m$ is the disjoint union of the closure of the main component of $\mathring\m$ and the sets $L_{p/q}\setminus\{a_{p/q}\}$, $p/q\in \Q$, $0<p/q<1$;

(ii) for each $p/q$ the intersection of $L_{p/q}$ with the closure of the main component of 
$\mathring\m$ is the single point $\{a_{p/q}\}$; 

(iii) for each $p/q$ the limb $L_{p/q}$ is closed and connected.
\end{prop}
\begin{proof}
Parts (i) and (ii) follow from the fact that the main component of $\mathring\m$ is the set of parameter values for which the alpha-fixed-point of $\F_a$ 
is an attractor (the analytic expression of the derivative of $\F_a$ at its alpha-fixed-point shows we have a unique main component, see Section 7 in \cite{BL2}), the fact that the derivative of $\F_a$ at its alpha-fixed-point is a holomorphic function of $a$, and the fact that $\m$ does not have irrational limbs (see \cite{BL2}). Part (iii) follows from parts (i) and (ii)
since $\m$ is connected (Lemma \ref{connected} above).
\end{proof}

To prove the main results of this subsection and the next, we shall need to use properties of the parabolic Mandelbrot set ${\mathcal M}_1$ 
analogous to properties of $\m$ and the classical Mandelbrot set $\mathcal M$. We shall refer to \cite{PR} for proofs of these: alternatively they may be proved by the methods of \cite{BL2}.
The first concerns the existence of a rational combinatorial rotation number for a repelling alpha-fixed-point.

\begin{prop}\label{landing_M1}
Every repelling fixed point of a member of the family $P_A$, $B\in \mathcal{M}_1$ (where $B=1-A^2$), is the landing point of a periodic ray.
\end{prop}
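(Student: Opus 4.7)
\bigskip
\noindent\textbf{Proof proposal.}
The plan is to transplant the classical Douady--Hubbard theory of external rays from the polynomial setting to $Per_1(1)$, using the conjugacy between the external dynamics of $P_A$ and the fixed model map $h(z)=(z^2+1/3)/(z^2/3+1)$ already exploited in the surgery of Section \ref{reviewsurgery}. For $B\in\mathcal{M}_1$ the filled Julia set $K_A$ is connected, so $\mathcal{A}_A(\infty)$ is simply connected; Proposition 4.2 of \cite{L1} gives a conformal conjugacy $\mathcal{A}_0(\infty)\to\mathcal{A}_A(\infty)$ between $P_0$ and $P_A$, and composing with the Möbius conjugacy of \S \ref{Per} between $P_0$ on $\mathcal{A}_0(\infty)$ and $h$ on $\widehat\C\setminus\overline\D$ produces a conformal isomorphism $\Upsilon_A:\widehat\C\setminus\overline\D\to\mathcal{A}_A(\infty)$ conjugating $h$ to $P_A$. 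On the model side, repelling Fatou coordinates at the parabolic fixed point $z=1$ of $h$, pulled back by iterates of $h$, foliate $\widehat\C\setminus\overline\D$ by curves $\rho(\theta)$, $\theta\in\R/\Z$, transported through $\Upsilon_A$ to external rays $R_A(\theta)\subset\mathcal{A}_A(\infty)$ of $P_A$. Since the restriction $h|_{S^1}$ is semiconjugate (via the Minkowski question mark map) to doubling, angles $\theta$ eventually periodic under doubling correspond to rays eventually periodic under $P_A$.

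Next I would show that every periodic external ray lands at either a repelling or a parabolic periodic point of $P_A$. The impression set of a periodic ray $R_A(\theta)$ is a compact, connected, forward invariant subset of $J_A$. The snail lemma (in the form valid for holomorphic maps, e.g.\ Milnor, \emph{Dynamics in one complex variable}, Lemma 16.2) prohibits accumulation on a Cremer or Siegel point; no accumulation on an attracting cycle is possible (the impression is in $J_A$); and, because the ray is a real-analytic curve mapped properly to itself by $P_A^k$, accumulation on the whole of an irrationally indifferent orbit is ruled out. The only remaining possibility is convergence to a single periodic point, which is either repelling or parabolic.

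The substantive step is the converse: every repelling fixed point $z_0$ of $P_A$ is the landing point of at least one periodic ray. Here I would run the Douady--Yoccoz ``shrinking sector'' argument. Choose a linearising coordinate $\psi$ on a small disc $D$ around $z_0$, with $\psi\circ P_A\circ\psi^{-1}(w)=\lambda w$, $|\lambda|>1$. Take a short arc $\gamma\subset D\cap\mathcal{A}_A(\infty)$ ending at $z_0$, and form the nested family $P_A^{-n}(\gamma)$ chosen to stay close to $z_0$; using the linearisation, these backward iterates sweep out a sector at $z_0$ and, by a length--area estimate in $\mathcal{A}_A(\infty)$ (equivalently, by hyperbolic contraction of $P_A^{-1}$ on the disc $\mathcal{A}_A(\infty)\setminus\{z_0\}$), pass to a limit: a ray $R_A(\theta_0)$ accumulating on $z_0$. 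The set $\Theta(z_0):=\{\theta:R_A(\theta)\text{ accumulates at }z_0\}$ is then non-empty, closed and invariant under the angle dynamics of $h|_{S^1}$, and a standard compactness plus minimality argument in the expanding system $h|_{S^1}$ produces a periodic $\theta\in\Theta(z_0)$; the ray $R_A(\theta)$ at this angle, already known by the previous paragraph to land, must land at $z_0$.

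The main obstacle is the delicate analytic control of the accumulation argument at $z_0$ in the parabolic outer setting: unlike for polynomials, the Böttcher coordinate is replaced by Fatou coordinates near $\infty$, so the ray structure is only asymptotically nice, and the shrinking-sector argument must be phrased using the $P_A$-invariant hyperbolic metric on $\mathcal{A}_A(\infty)$ (equivalently on $\widehat\C\setminus\overline\D$ via $\Upsilon_A$) rather than the Euclidean one. This is precisely the setting handled in \cite{PR}, so once the rays are defined via $\Upsilon_A$ the landing statement can alternatively be quoted from there; the proof sketched above is, however, self-contained once the analytic contraction estimate near $z_0$ is checked.
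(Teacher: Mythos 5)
The paper does not prove this proposition at all: its ``proof'' is a one--line citation of \cite{PR}, Theorem 3.7. So the closing sentence of your proposal --- that the landing statement ``can alternatively be quoted from there'' --- is exactly the paper's argument, and to that extent you are aligned with it. The issue is with the self-contained sketch you offer as the main route.

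The decisive step of that sketch has a genuine gap. You claim that since $\Theta(z_0)=\{\theta: R_A(\theta)\ \text{accumulates at}\ z_0\}$ is non-empty, closed and invariant under the angle dynamics of $h|_{S^1}$ (conjugate to doubling), ``a standard compactness plus minimality argument'' yields a periodic angle in $\Theta(z_0)$. This is false as stated: a non-empty closed invariant subset of an expanding degree-two circle map need not contain any periodic point. The doubling map has, for every irrational $\rho$, a minimal invariant Cantor set of rotation number $\rho$ (a Sturmian rotation set), which is free of periodic orbits; nothing in closedness, invariance or minimality rules out that $\Theta(z_0)$ is such a set. The classical Douady--Hubbard--Yoccoz argument closes precisely this hole by using the dynamics at $z_0$ itself: one shows that the induced first-return map on the accesses (or on the accumulating rays) preserves their cyclic order, hence has a well-defined rotation number, and then excludes an irrational rotation number by an argument in the linearising coordinate --- for instance by passing to the quotient torus $\C^*/\langle\lambda\rangle$, where the trace of the basin near $z_0$ has only finitely many relevant homotopy classes of ray-curves, or by an inverse-branch contraction argument fixing a ray of some period. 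Without this ingredient your construction may produce only non-periodic angles in $\Theta(z_0)$, and the proof does not go through. In addition, the earlier analytic step (the shrinking-sector/length--area estimate carried out with the hyperbolic metric of $\mathcal{A}_A(\infty)$, and the construction of the invariant ray foliation for $h$ near its parabolic point) is exactly where the adaptation from the polynomial to the $Per_1(1)$ setting requires work, and you leave it unchecked; this is the content of \cite{PR}. So either fill in the rotation-number/linearisation argument and the contraction estimate, or do what the paper does and simply invoke \cite{PR}, Theorem 3.7.
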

\begin{proof}
\cite{PR}, Theorem 3.7.
\end{proof}

We can now define the {\it $p/q$-limb} $L'_{p/q}$ of $\mathcal{M}_1$ to be 
the set of $B\in \mathcal{M}_1$ ($B=1-A^2$), for which the combinatorial rotation number of the fixed point of $P_A$ is $p/q$, together with the parameter value 
$B_{p/q}$ for which the derivative at the fixed point of $P_A$
is $e^{2\pi i p/q}$. 

\begin{cor}\label{M1_limb_properties}

(i) $\mathcal{M}_1$ is the disjoint union of the closure of the main component of $\mathring{\mathcal{M}}_1$ and the sets $L'_{p/q}\setminus\{a_{p/q}\}$, $p/q\in \Q$, $0<p/q<1$;

(ii) for each $p/q$ the intersection of $L'_{p/q}$ with the closure of the main component of 
$\mathcal{M}_1$ is the single point $\{B_{p/q}\}$; 

(iii) for each $p/q$ the limb $L'_{p/q}$ is closed and connected.
\end{cor}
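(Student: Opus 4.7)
The plan is to mirror the proof of Lemma \ref{limb_properties} with $\m$ replaced by $\mathcal{M}_1$ and $\F_a$ replaced by $P_A$ (with $B=1-A^2$), relying on three ingredients for the family $Per_1(1)$ that parallel those used for $\F_a$: holomorphic dependence of the multiplier of the $\alpha$-fixed point on the parameter, the landing statement of Proposition \ref{landing_M1} which makes the combinatorial rotation number of a repelling $\alpha$-fixed point well-defined, and connectedness of $\mathcal{M}_1$ (the structural input from \cite{PR}).

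First I would exploit the fact that, under our parametrisation $B=1-A^2$, the value $B$ is by definition precisely the multiplier of the finite fixed point $\alpha$ of $P_A$. Consequently the main component of $\mathring{\mathcal{M}}_1$ coincides with the open unit disc $\{|B|<1\}$, its closure is $\{|B|\leq 1\}$, and $B_{p/q}=e^{2\pi i p/q}$ is the unique point on the boundary of this closure where the multiplier of $\alpha$ equals $e^{2\pi i p/q}$. For any $B\in\mathcal{M}_1$ with $|B|>1$, the fixed point $\alpha$ is repelling, so Proposition \ref{landing_M1} assigns to it a combinatorial rotation number $p/q\in\Q\cap(0,1)$, placing $B$ in $L'_{p/q}\setminus\{B_{p/q}\}$. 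Since the rotation-number strata labelled by different $p/q$ are pairwise disjoint and none meets $\{|B|<1\}$, this yields the disjoint-union decomposition (i) and the single-point intersection (ii).

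For (iii), closedness of each $L'_{p/q}$ follows from continuity of the multiplier in $B$ together with the fact that the combinatorial rotation number is locally constant on the open set where $\alpha$ is repelling: any sequence in $L'_{p/q}\setminus\{B_{p/q}\}$ accumulating on the closure of the main component can only do so at a parameter whose $\alpha$-fixed point is indifferent with multiplier $e^{2\pi i p/q}$, and that parameter is $B_{p/q}$. Connectedness is then the direct transcription of the argument behind Lemma \ref{limb_properties}(iii): using (i), (ii), closedness of each limb, and connectedness of $\mathcal{M}_1$ as a whole, any non-trivial partition $L'_{p/q}=A_1\sqcup A_2$ into disjoint closed sets with (say) $B_{p/q}\in A_1$ would make $A_2$ a clopen subset of $\mathcal{M}_1$ disjoint from everything else, contradicting connectedness of $\mathcal{M}_1$.

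The main obstacle is thus the connectedness of $\mathcal{M}_1$, which is where the analogy with the $\m$ case is not self-contained: for $\m$ it was established in Lemma \ref{connected} as a consequence of the already-proved homeomorphism properties of $\chi$, whereas here it must be imported from \cite{PR} (or equivalently from the Douady--Hubbard style uniformisation of $Per_1(1)\setminus\mathcal{M}_1$ by the punctured disc recalled in Section \ref{Mil}, via Milnor's conformal isomorphism $\Psi$). Once this input is granted, the rest of the argument is a line-by-line copy of the proof of Lemma \ref{limb_properties}, with the multiplier map $A\mapsto B=1-A^2$ playing the role that the derivative of $\F_a$ at its $\alpha$-fixed point played for $\m$.
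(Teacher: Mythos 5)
The paper does not actually argue this corollary: its proof is the citation ``\cite{PR}, Section 3.2.3 and in particular Theorem 3.14'' (with the earlier remark that these properties of $\mathcal{M}_1$ could alternatively be proved by the methods of \cite{BL2}). Your proposal instead tries to transcribe the proof of Lemma \ref{limb_properties}, importing only connectedness of $\mathcal{M}_1$ from \cite{PR}. Parts (i) and (ii) of your argument are essentially fine, granted two silent inputs: that the main component of $\mathring{\mathcal{M}}_1$ is exactly the open unit disc in the $B$-coordinate (which needs the analogue of Proposition \ref{neutparametervalues}, i.e.\ that indifferent parameters lie on $\partial\mathcal{M}_1$, not just the observation that $B$ is the multiplier), and that a repelling fixed point of $P_A$ with $B\in\mathcal{M}_1$ has a well-defined \emph{non-zero} rational rotation number -- the bare landing statement of Proposition \ref{landing_M1} does not by itself exclude rotation number $0$; one must know that the fixed parabolic external ray lands at the parabolic point at infinity rather than at the finite fixed point.

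The genuine gap is in the closedness half of (iii). Continuity of the multiplier plus local constancy of the rotation number on the repelling locus say nothing about the boundary behaviour of that locus: if $B_n\in L'_{p/q}\setminus\{B_{p/q}\}$ and $|B_n|\to 1$, these two facts do not rule out accumulation at a Siegel/Cremer parameter or at a parabolic parameter $e^{2\pi i p'/q'}$ with $p'/q'\neq p/q$. Excluding this is exactly the content of the attachment statement and requires either a Yoccoz-type inequality for $Per_1(1)$ or the wake/parameter-ray structure -- i.e.\ precisely what \cite{PR} Theorem 3.14 supplies (for $\m$ the corresponding control comes from the Yoccoz inequality of \cite{BL2}, which is why Lemma \ref{limb_properties} is not purely formal either). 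Moreover, the local constancy of the combinatorial rotation number that you invoke is itself a stability statement for the landing cycle of parabolic external rays and is not contained in Proposition \ref{landing_M1}. Once closedness and local constancy are granted, your connectedness argument does work: $A_2$ is open in $\mathcal{M}_1$ because $L'_{p/q}\setminus\{B_{p/q}\}$ is relatively clopen in the open set $\mathcal{M}_1$ minus the closed main disc, and closed because $L'_{p/q}$ is, contradicting connectedness of $\mathcal{M}_1$. So your route is viable only if the closedness/attachment facts are imported from \cite{PR} alongside connectedness; as written, the step you present as a consequence of continuity of the multiplier is the hard point, and it is the one the paper's citation is there to cover.
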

\begin{proof}
\cite{PR}, Section 3.2.3 and in particular Theorem 3.14.
\end{proof}

\begin{lemma}\label{enc}
For each integer $n>1$ there exists a loop $u_n\subset\L\f'$ enclosing $\bigcup\{L_{p/q}:1/n \le p/q \le (n-1)/n\}$. 
These loops can be chosen in such a way that every point of $\m\setminus\{7\}$ is enclosed by $u_n$ for some $n$.
\end{lemma}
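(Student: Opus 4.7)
The plan is to define $E_n := \bigcup \{L_{p/q} : 1/n \le p/q \le (n-1)/n\}$, to show $E_n$ is bounded away from the root point $a=7$ of $\m$, and then to take $u_n$ to be (a slight inward perturbation of) the boundary of $\L\f' \setminus \overline{\D(7,r_n)}$ for a suitable $r_n>0$; the exhaustion property will follow by letting $r_n \to 0$.

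The crux is to prove that $E_n$ is bounded away from $a=7$. For the root points: under the multiplier map on the boundary of the main hyperbolic component of $\mathring\m$, the subset $\{a_{p/q}:p/q\in[1/n,(n-1)/n]\}$ corresponds to a compact sub-arc of $\S^1$ bounded away from $1$, and hence lies at some positive distance $\epsilon_n$ from $a=7$. The remaining task is a uniform diameter bound on the limbs themselves: for any fixed cut-off $Q$ only finitely many $L_{p/q}$ with $q\le Q$ appear, each closed and (by Lemma \ref{limb_properties}(iii) together with the fact that $\F_7$ has a fixed point of multiplier $1$ rather than $e^{2\pi i p/q}$) disjoint from $a=7$, so their union is compact and bounded away from $a=7$; and for $q>Q$ a Yoccoz-type bound $\mathrm{diam}(L_{p/q})\to 0$ uniformly in $q$ ensures that for $Q$ large every point of $L_{p/q}$ with $q>Q$ lies within $\epsilon_n/2$ of the corresponding root point. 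Combining, $|a-7|\ge\delta_n>0$ for every $a\in E_n$.

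With $\delta_n>0$ in hand, pick $r_n\in(0,\delta_n)$ with $r_n\to 0$; the simply-connected region $\L\f'\setminus\overline{\D(7,r_n)}$ has a piecewise-$C^1$ Jordan boundary (arcs of $\partial\L\f'$ joined to an arc of $\partial\D(7,r_n)$), and a slight inward perturbation into $\L\f'$ gives a smooth Jordan curve $u_n\subset\L\f'$ enclosing $\L\f'\setminus\overline{\D(7,r_n)}$, and in particular $E_n$. To verify the exhaustion property, let $a\in\m\setminus\{7\}$; by Lemma \ref{limb_properties}(i) either $a$ lies in the closure of the main hyperbolic component, in which case $r_n<|a-7|$ for all large $n$ places $a$ inside $u_n$, or $a\in L_{p/q}\setminus\{a_{p/q}\}$ for some rational $p/q\in(0,1)$, in which case choosing $n$ large enough that both $p/q\in[1/n,(n-1)/n]$ and $r_n<|a-7|$ again places $a$ inside $u_n$.

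The main obstacle is the uniform limb-diameter bound. Since higher-level Yoccoz inequalities for the correspondence $\F_a$ are described in the introduction as unexplored, the cleanest route is to pull the bound back from $\mathcal{M}_1$, where it is classical (either directly as a Yoccoz inequality for quadratic rational maps with a persistent parabolic fixed point, or via the Petersen--Roesch identification $\mathcal{M}_1\cong\mathcal{M}$), using the homeomorphism $\chi:\K\to\chi(\K)$ of Proposition \ref{homeo_on_lune} and the fact that $\chi$ sends $L_{p/q}\cap\K$ to $L'_{p/q}\cap\chi(\K)$ (a consequence of $\chi$ being a hybrid conjugacy, which preserves the combinatorial rotation number at the repelling $\alpha$-fixed-point).
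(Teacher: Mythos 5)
Your overall plan (enclose the limbs by a curve that hugs $\partial\L\f'$ and a small circle about $a=7$) is different from the paper's, and it stands or falls on the claim that $E_n=\bigcup\{L_{p/q}:1/n\le p/q\le (n-1)/n\}$ is bounded away from $a=7$; this is exactly where your argument has a genuine gap. You justify the needed uniform shrinking of the limbs $L_{p/q}$ (for $q$ large, $p/q$ in a fixed interval) by ``pulling the bound back from $\mathcal{M}_1$'' through the homeomorphism $\chi$ of Proposition \ref{homeo_on_lune}. As stated this does not work: a homeomorphism carries no modulus of continuity for its inverse, so $\mathrm{diam}(L'_{p/q})\to 0$ does not by itself yield $\mathrm{diam}(L_{p/q})\to 0$; worse, $\chi$ is only defined on $\K=\mathring K$, which by construction \emph{excludes} a neighbourhood of $a=7$, so you cannot even assert that the limbs you are trying to control lie in the domain of $\chi$ --- that is essentially the statement you are trying to prove, so the appeal is dangerously close to circular. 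In addition, the quantitative input you invoke on the $\mathcal{M}_1$ side (a Yoccoz-type bound $\mathrm{diam}(L'_{p/q})\to 0$ as $q\to\infty$ in a fixed rotation-number interval) is not among the facts the paper quotes from Petersen--Roesch (only $\mathrm{diam}(L'_{p/q})\to 0$ as $p/q\to 0$ is cited), and transporting it from $\mathcal M$ via the homeomorphism $\mathcal M\cong\mathcal{M}_1$ runs into the same ``homeomorphisms do not transfer diameters'' objection. The gap can probably be repaired --- e.g.\ by an annulus-crossing argument: if a limb with root at distance $\ge\epsilon_n$ from $7$ reached $\D(7,r)$, it would contain a connected piece of definite diameter inside the compact set $\m\cap\{|a-7|\ge r'\}\subset\K$, and continuity plus injectivity of $\chi$ on that compact set forces a definite lower bound on the diameter of its image inside $L'_{p/q}$, contradicting a qualitative ``only finitely many limbs of $\mathcal{M}_1$ have diameter $\ge\delta$'' statement (which does transfer from $\mathcal M$ by a compactness argument) --- but none of this is in your write-up, and it is precisely the hard content.

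For contrast, the paper's proof needs no control on limb diameters at all. It uses only the \emph{first-level} Yoccoz inequality of \cite{BL2} (on the multiplier at the fixed point) to find a neighbourhood $N_{n+1}$ of the root $a_{1/(n+1)}$ disjoint from every $L_{p/q}$ with $1/n\le p/q\le (n-1)/n$, picks a boundary point of $\m$ in $N_{n+1}$ accessible from the complement (such points are dense since the complement of the connected compact set $\m$ is simply connected), and assembles $u_n$ from a path inside $N_{n+1}$, a path in $\L\f'\setminus\m$ to the complex-conjugate point, the mirror path, and a path across the main component of $\mathring\m$; since the connected limbs meet none of these pieces but contain their enclosed roots, they are enclosed. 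The key difference is that the paper lets the loop run \emph{through} $\m$, sidestepping the higher-level Yoccoz-type estimates (explicitly described in the introduction as unexplored for this family) that your route requires.
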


\begin{proof}
This would be straightforward if we had established the existence of parameter space external rays landing at the roots of limbs, but we shall sidestep 
this question and rely instead on the fact that since $\hat\C\setminus \m$ is a simply-connected open set there exists a dense set of points in 
$\partial \m$ which are accessible  from the complement of $\m$ in $\C$. First we 
observe that there exists a neighbourhood $N_{n+1}$ of $a_{1/(n+1)}$ (the root point of $L_{1/(n+1)}$), in $\L\f'$, such that $N_{n+1}$ 
does not meet any $L_{p/q}$ having $1/n \le p/q \le 1-1/n$. This 
follows from the restrictions that the Yoccoz inequality proved in \cite{BL2} puts on the derivative at the fixed point of $\F_a$ for $a\in L_{p/q}$ 
(see Theorem 2 and 
Figure 3 in \cite{BL2}), together with the continuity of this derivative with respect to $a$. Now let $a'_{n+1}$ be some point of 
$N_{n+1}\cap\partial\m$ accessible from outside $\m$, and define a loop $u_n$ by joining $a_{1/(n+1)}$ to $a'_{1/(n+1)}$ by any path within
$N_{n+1}$, then from $a'_{1/(n+1)}$ to its complex conjugate $a'_{n/(n+1)}$ by any path in $\L\f'\setminus\m$, then from $a'_{n/(n+1)}$ to 
$a_{n/(n+1)}$ in the obvious way, and finally from $a_{n/(n+1)}$ to $a_{1/(n+1)}$ by any path within the main component of $\mathring\m$, 
noting that we may choose these last paths 
in such a way that every point of the main component is enclosed by at least one of the loops $u_n$. Since $u_n$ encloses all the
roots $\{a_{p/q}: 1/n \le p/q \le (n-1)/n\}$, it must also enclose the corresponding limbs.
\end{proof}

\begin{prop}\label{surj_prop}
Every $B\in \mathcal{M}_1\setminus\{1\}$ is in $\chi(\K)$ for $K=\L\f'\setminus N$ with $N=\D(7,r)$ for $r$ sufficiently small.
\end{prop}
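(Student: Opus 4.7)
The plan is a case analysis on $B\in\mathcal{M}_1\setminus\{1\}$, according to whether $B$ lies in the closure of the main component $C_0$ of $\mathring{\mathcal{M}}_1$ or in a limb $L'_{p/q}$. The organising principle is that $\chi$ respects the combinatorial structure: by Proposition~\ref{hypercom}, the restriction of $\chi$ to the main component $H_0$ of $\mathring{\m}$ is a proper holomorphic map, which is of degree one by the injectivity of $\chi$ on $\K$ (Proposition~\ref{homeo_on_lune}), hence a conformal isomorphism onto some hyperbolic component of $\mathring{\mathcal{M}}_1$. On the open set of attracting/indifferent parameters of the $\alpha$-fixed-point the multiplier is preserved by conformality/continuity of $\chi$, so $\chi(a_{p/q})=B_{p/q}$ and more generally $\chi(L_{p/q})\subset L'_{p/q}$; since $\partial H_0$ carries the sequence of roots $a_{p/q}$ mapping to roots $B_{p/q}\subset\partial C_0$, the target hyperbolic component must be $C_0$ itself.

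For $B\in\overline{C_0}\setminus\{1\}$ I would invoke the multiplier maps $\rho_{H_0}\colon\overline{H_0}\to\overline{\D}$ and $\rho_{C_0}\colon\overline{C_0}\to\overline{\D}$, which are homeomorphisms sending the cusps to $1\in\partial\D$. Then $\chi|_{\overline{H_0}\setminus\{7\}}=\rho_{C_0}^{-1}\circ\rho_{H_0}$ is a homeomorphism onto $\overline{C_0}\setminus\{1\}$, so $B=\chi(a)$ for a unique $a\in\overline{H_0}\setminus\{7\}$; taking $r<|a-7|$ places $a$ in $\K$ and hence $B\in\chi(\K)$.

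For $B\in L'_{p/q}\setminus\{B_{p/q}\}$ I would choose $n$ large enough that $1/n\le p/q\le (n-1)/n$ and invoke the preceding Lemma to obtain a loop $u_n\subset\L\f'$ enclosing all $L_{p'/q'}$ with $1/n\le p'/q'\le (n-1)/n$. For $r$ sufficiently small both $u_n$ and the Jordan disc $D_n$ it bounds lie in $\K$; since $\chi$ is a homeomorphism on $\K$ (Proposition~\ref{homeo_on_lune}), $\chi(u_n)$ is a Jordan curve bounding the topological disc $\chi(D_n)$ in $\C$. The heart of the argument is to show $L'_{p/q}\subset\chi(D_n)$: the set $L'_{p/q}$ is closed and connected (Corollary~\ref{M1_limb_properties}(iii)), contains $B_{p/q}=\chi(a_{p/q})$ which lies in the interior of $\chi(D_n)$ (because $a_{p/q}$ is strictly enclosed by $u_n$), and by Corollary~\ref{M1_limb_properties}(i)--(ii) meets $\overline{C_0}$ only at $B_{p/q}$. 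The intersection $\chi(u_n)\cap\mathcal{M}_1$ consists of $B_{1/(n+1)}$, $B_{n/(n+1)}$, and a subarc of $\overline{C_0}$ coming from the main-component portion of $u_n$ (the other two sub-paths of $u_n$ lie in $\L\f'\setminus\m$, so their $\chi$-images lie in $Per_1(1)\setminus\mathcal{M}_1$); none of these points lies in $L'_{p/q}\setminus\{B_{p/q}\}$. Being connected, $L'_{p/q}$ therefore cannot meet $\chi(u_n)$, and is forced into $\chi(D_n)\subset\chi(\K)$, giving $B\in\chi(\K)$.

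The main obstacle is this final topological containment: it requires the limb-root structure of $\mathcal{M}_1$ supplied by Corollary~\ref{M1_limb_properties}, a careful identification of $\chi(u_n)\cap\mathcal{M}_1$ that exploits both which pieces of $u_n$ lie in $\m$ and the rotation-number matching $\chi(a_{p'/q'})=B_{p'/q'}$ at the endpoints $p'/q'\in\{1/(n+1),n/(n+1)\}$, and the separation of distinct limbs in $\mathcal{M}_1$. A secondary subtlety is the boundary homeomorphism between $\overline{H_0}$ and $\overline{C_0}$, which I handle via the multiplier coordinate; alternatively one could treat irrational boundary points of $C_0$ by a limit argument from the interior, using that $\chi|_{H_0}$ is already known to be a conformal bijection onto $C_0$.
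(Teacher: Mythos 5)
Your argument is essentially the paper's proof run forwards rather than by contradiction: the paper assumes some $B\in\mathcal{M}_1\setminus\{1\}$ misses $\chi(\m)$, places it (via the homeomorphism between the closures of the two main components) in a limb $L'_{p/q}$, shows $B\notin\chi(D_n)$ using that the surgery preserves connectedness of the filled Julia set, and then contradicts connectedness of $L'_{p/q}$ (Corollary \ref{M1_limb_properties}(iii)); you instead trap the whole limb inside $\chi(D_n)$ by showing it meets $\chi(D_n)$ at the root and avoids the Jordan curve $\chi(u_n)$. The ingredients are the same in both versions: the loop lemma, Proposition \ref{homeo_on_lune}, Corollary \ref{M1_limb_properties}, and the identification of $\chi$ on the closed main component (the paper gets this from both main components being Jordan discs plus Carath\'eodory extension of the conformal map $\chi$; your multiplier-map formulation needs exactly the same input, namely that the multiplier map is injective on the boundary of the main component of $\m$, though your fallback limit-and-compactness argument for boundary points also works). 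Two small corrections. First, the lemma only guarantees that the middle arc of $u_n$ lies in $\L\f'\setminus\m$; the two arcs inside $N_{n+1}$ and its conjugate are arbitrary paths in those neighbourhoods and may meet $\m$, so $\chi(u_n)\cap\mathcal{M}_1$ can also contain images of points of the closed main component of $\m$ and of limbs with rotation number outside $[1/n,(n-1)/n]$; your conclusion that none of these points lies in $L'_{p/q}\setminus\{B_{p/q}\}$ survives this correction, and $B_{p/q}$ itself is excluded by injectivity exactly as you say. Second, the inclusion $\chi(L_{p/q})\subset L'_{p/q}$ at parameters whose $\alpha$-fixed point is repelling does not follow from multiplier preservation at attracting or indifferent parameters; it requires invariance of the combinatorial rotation number under the (orientation-preserving) hybrid equivalence. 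This is a standard fact, and the paper's own terse ``disconnects the limb'' step leans on it just as implicitly, but it should be stated as a separate ingredient rather than deduced from conformality on the interior.
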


\begin{proof}
For a contradiction, suppose that there exists $B\in \mathcal{M}_1\setminus\{1\}$ and a sequence of positive real numbers $r_m$
converging to zero such that for every $K=\L\f'\setminus \D(7,r_m)$ we have $B\notin \chi(\m\cap K)$. 
We prove that the existence of any $B\in \mathcal{M}_1\setminus\chi(\m)$ leads to a contradiction. Let $B$ be such a parameter value. 
The main component of $\mathring{\mathcal M}_1$ is the unit disc and the main component of $\mathring{\m}$ is also a Jordan disc (bounded by a simple closed curve the equation of 
which can be readily computed from the information in Section 7 of \cite{BL2}, in particular Remark 4). So our homeomorphism $\chi$ between these components extends to a homeomorphism
between their closures.
Thus $B$ cannot lie in the closure of the main component of the interior of ${\mathcal M}_1$ and must therefore lie in some limb $L'_{p/q}$ of $\mathcal{M}_1$.
Choose $1/n<p/q$ and let $D_n$ denote the open topological disc in $\L\f'$ bounded by the Jordan curve $u_n$ constructed in the preceding lemma.
Let $r_m$ be small enough that $\D(7,r_m)$ does not intersect $D_n$, and let $K=\L\f'\setminus \D(7,r_m)$, so $D_n\subset K$. Now $B$ cannot lie in $\chi(D_n)$, 
since if it did then as $P_A$ (for $B=1-A^2)$ has connected filled Julia set and $\chi$ is defined by a surgery preserving filled Julia sets, we would have $B=\chi(a)$ for some $a\in \m$.
Hence $B\notin \chi(D_n)$, but now $\chi(D_n \setminus L_{p/q})$ disconnects the limb $L'_{p/q}$ of ${\mathcal M}_1$, contradicting Corollary \ref{M1_limb_properties}(iii).
\end{proof}

Since $\chi(\m)\subset \mathcal{M}_1$ and $\chi(\K\setminus\m)\subset Per_1(1)\setminus \mathcal{M}_1$, we deduce from 
Propositions \ref{homeo_on_lune}  and \ref{surj_prop}:
\begin{cor}\label{almosthomeo}
$\chi:\m\setminus\{7\}\to \mathcal{M}_1\setminus\{1\}$ is a homeomorphism. \qed
\end{cor}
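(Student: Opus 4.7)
The plan is to combine Propositions \ref{homeo_on_lune} and \ref{surj_prop}, using the fact that on $\m$ the definition of $\chi$ is independent of the auxiliary neighbourhood $U(7)$. Since every $a\in\m\setminus\{7\}$ lies in $\K$ whenever $U(7)$ is sufficiently small, the local statements of Proposition \ref{homeo_on_lune} (injectivity, continuity, and continuity of the inverse on $\K$) glue together to give the corresponding global statements on $\m\setminus\{7\}$. Surjectivity onto $\mathcal{M}_1\setminus\{1\}$ is handed to us by Proposition \ref{surj_prop}, upon observing that $\chi(\K\setminus\m)\subset Per_1(1)\setminus\mathcal{M}_1$, so any $B\in\mathcal{M}_1\setminus\{1\}$ lying in $\chi(\K)$ automatically has its preimage in $\m\cap\K\subset\m\setminus\{7\}$.

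The one extra ingredient is that $B=1$ is not in the image of $\chi|_{\m\setminus\{7\}}$. The value $B=1$ corresponds to $A=0$, and $P_0(z)=z+1/z$ is the unique member of $Per_1(1)$ whose parabolic fixed point at $\infty$ has multiplicity $3$, equivalently two attracting petals. On the other hand, the expansion
\[
\F_a(\zeta)=\zeta+\frac{a-7}{3(a-1)}\zeta^2+\cdots
\]
from Section \ref{dynlune} shows that the parabolic fixed point of $\F_a$ has multiplicity exactly $2$ whenever $a\ne 7$. Since hybrid equivalence restricts to a one-sided quasiconformal conjugacy of parabolic germs, and such a conjugacy preserves the multiplicity (equivalently, the number of attracting petals), $\F_a$ and $P_0$ cannot be hybrid equivalent for $a\ne 7$; hence $\chi(a)\ne 1$ on $\m\setminus\{7\}$.

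For continuity of the inverse, the key observation is that $\chi(\K)$ is open in $Per_1(1)$ by invariance of domain, since $\chi|_\K$ is a homeomorphism between two open topological $2$-manifolds. Thus, given $B_0=\chi(a_0)\in\mathcal{M}_1\setminus\{1\}$ and any sequence $B_n\to B_0$ in $\mathcal{M}_1\setminus\{1\}$, after choosing $K$ with $a_0\in\K$ we have $B_n\in\chi(\K)$ for $n$ large, and then $\chi^{-1}(B_n)\to a_0$ by continuity of the local inverse $(\chi|_\K)^{-1}$. The only non-routine step is the multiplicity argument in the second paragraph; everything else is a direct assembly of the propositions already proved.
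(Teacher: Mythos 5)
Your proposal is correct and follows essentially the same route as the paper: Corollary \ref{almosthomeo} is deduced there directly from Propositions \ref{homeo_on_lune} and \ref{surj_prop}, together with the observations that $\chi|_{\m}$ is independent of the choice of $U(7)$ and that $\chi(\K\setminus\m)\subset Per_1(1)\setminus\mathcal{M}_1$; your invariance-of-domain remark for continuity of the inverse just makes explicit the local-to-global gluing the paper leaves implicit. The one place where you go beyond the text is the exclusion of $B=1$ from $\chi(\m\setminus\{7\})$: the paper asserts this containment (in the strategy paragraph of Section \ref{intro}) without a separate argument, so supplying a reason is a genuine improvement. Be careful, however, with the phrase that a one-sided quasiconformal conjugacy of parabolic germs ``preserves the multiplicity'': the number of attracting petals is a two-sided invariant of the germ, while the hybrid conjugacy is only given on a pinched neighbourhood of $\Lambda_{a,-}$, so the claim as stated needs justification. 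The clean way to run your argument is through the filled sets themselves: the conjugacy maps $\Lambda_{a,-}$ onto $K_A$, and for $A=0$ every interior orbit of $K_0$ converges to the parabolic point, whereas for $a\neq 7$ no orbit in $\Lambda_{a,-}\setminus\{P_a\}$ converges to $P_a$, since $\Lambda_{a,-}\subset V_a\cup\{P_a\}$ lies in a sector about the repelling direction; alternatively, simply note that for $a\in\m$ the map $\F_a|$ has an $\alpha$-fixed point in $\Lambda_{a,-}$ distinct from $P_a$, while $P_0(z)=z+1/z$ has no fixed point other than $\infty$. Either version makes your second paragraph rigorous; the rest of the proposal is a faithful assembly of the propositions already proved.
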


\subsection{Completing the proof of the Main Theorem: $\m$ is homeomorphic to $\mathcal{M}_1$}\label{final}

We use a further property of $\mathcal{M}_1$ analogous  to properties of $\m$ and $\mathcal{M}$:

\begin{prop}\label{M1limbs} 
The limbs $L'_{p/q}$ of $\mathcal M_1$ have diameters which converge to $0$ as $p/q$ converges to $0$, and attaching points $B_{p/q}$ which converge to $B=1$
as $p/q$ converges to $0$.
\end{prop}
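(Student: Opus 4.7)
I would treat the two assertions of Proposition \ref{M1limbs} separately, relying on the explicit description of the main component of $\mathring{\mathcal M}_1$ and on a Yoccoz-type inequality for $Per_1(1)$.

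\emph{Attaching points.} I first observe that the main hyperbolic component of $\mathring{\mathcal M}_1$ is the open unit disc $\{|B|<1\}$. Indeed, the fixed points of $P_A(z)=z+1/z+A$ are $\infty$ (parabolic) and $z=-1/A$, and $P_A'(-1/A)=1-A^2=B$; hence the finite fixed point is attracting precisely when $|B|<1$, and on the corresponding component of $\mathring{\mathcal M}_1$ the free critical orbit is captured by its basin. By the definition preceding Corollary~\ref{M1_limb_properties}, the root $B_{p/q}$ is the unique point on the boundary of this component where the multiplier of the non-parabolic fixed point equals $e^{2\pi ip/q}$, whence $B_{p/q}=e^{2\pi ip/q}$. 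Thus $B_{p/q}\to 1$ as $p/q\to 0$.

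\emph{Diameters.} Since the parameter $B$ itself coincides with the multiplier of the non-parabolic fixed point of $P_A$, a Yoccoz-type inequality for $Per_1(1)$ translates directly into a diameter bound on $L'_{p/q}$. Such an inequality is established by Petersen and Roesch in \cite{PR}: for every $B\in L'_{p/q}$ the multiplier $B$ is confined to a horn-like region of size $O((\log q)/q)$ tangent to the unit circle at $e^{2\pi ip/q}$. Because $p/q\to 0$ forces $q\to\infty$, this forces $\operatorname{diam}(L'_{p/q})\to 0$ as $p/q\to 0$.

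\emph{Main obstacle.} The essential input is the Yoccoz inequality for $\mathcal M_1$; once available, both assertions of the proposition follow immediately. Citing it from \cite{PR} is the most economical route, though an alternative self-contained derivation is possible by adapting the parabolic-like mapping proof of Theorem~2 in \cite{BL2} (based on \cite{L1}) to the family $Per_1(1)$.
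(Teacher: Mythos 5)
Your proposal is correct and follows essentially the same route as the paper: the diameter statement is delegated to Petersen--Roesch (the paper cites their Corollary 3.23 directly, you invoke their Yoccoz-type multiplier bound, which is the same input), and the convergence of the roots follows from the fact that $B$ is precisely the multiplier of the finite fixed point, so $B_{p/q}=e^{2\pi ip/q}\to 1$ — the paper phrases this as continuity of the derivative at the $\alpha$-fixed point. Your explicit identification of the main component as $\{|B|<1\}$ and of $B_{p/q}=e^{2\pi ip/q}$ is a slightly more detailed version of the same observation.
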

\begin{proof}
\cite{PR}, Corollary 3.23. The fact that the roots of the limbs tend to $B=1$ is an immediate consequence of the continuity at $B=1$ of the derivative of $P_A$ at its alpha-fixed-point.
\end{proof}

\begin{cor}\label{chi_on_roots}
The map $\chi:\m\to \mathcal{M}_1$ is a homeomorphism.
\end{cor}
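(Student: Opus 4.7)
The plan is to extend the map from Corollary~\ref{almosthomeo} across the parabolic root points by setting $\chi(7)=1$, and then to conclude by a short compactness argument, with Proposition~\ref{M1limbs} available if a more quantitative approach is preferred.

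First, observe that the extension $\chi:\m\to\mathcal{M}_1$ is automatically a bijection, since Corollary~\ref{almosthomeo} provides a bijection $\m\setminus\{7\}\to\mathcal{M}_1\setminus\{1\}$. Also, $\m$ is compact (being a closed subset of $\overline{\D}(4,3)$ by Lemma~\ref{cr}) and $\mathcal{M}_1$ is Hausdorff, so as soon as the extended $\chi$ is shown to be continuous on all of $\m$, it will automatically be a homeomorphism: a continuous bijection from a compact space onto a Hausdorff space is always a homeomorphism. Continuity on $\m\setminus\{7\}$ is given by Corollary~\ref{almosthomeo}, so the only point where continuity requires an argument is $a=7$.

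To prove continuity at $a=7$, I would argue by contradiction. Suppose $a_n\to 7$ in $\m$ but $\chi(a_n)\not\to 1$. Since $\mathcal{M}_1$ is compact, some subsequence $\chi(a_{n_k})$ converges to a point $B^*\in\mathcal{M}_1$ with $B^*\neq 1$. Then $B^*\in\mathcal{M}_1\setminus\{1\}$, so Corollary~\ref{almosthomeo} furnishes a unique $a^*\in\m\setminus\{7\}$ with $\chi(a^*)=B^*$, and also guarantees that $\chi^{-1}$ is continuous at $B^*$. Applying $\chi^{-1}$ to the convergent subsequence forces $a_{n_k}=\chi^{-1}(\chi(a_{n_k}))\to a^*\neq 7$, contradicting $a_n\to 7$. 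Hence every accumulation point of $\chi(a_n)$ equals $1$, which gives $\chi(a_n)\to 1=\chi(7)$, as required.

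There is essentially no serious obstacle once Corollary~\ref{almosthomeo} is in hand; the main point to verify is simply that $\chi:\m\setminus\{7\}\to\mathcal{M}_1\setminus\{1\}$ is intrinsically defined, independently of the choice of truncating neighbourhood $U(7)$, which is precisely the content of Proposition~\ref{welldefined}. If one prefers a more geometric proof of continuity at $a=7$, then Proposition~\ref{M1limbs} can be invoked directly: because $\chi$ is dynamical and the combinatorial rotation number at the $\alpha$-fixed point is preserved by hybrid conjugacy, $\chi$ carries each limb $L_{p/q}$ of $\m$ onto the corresponding limb $L'_{p/q}$ of $\mathcal{M}_1$; given $\epsilon>0$, all but finitely many $L'_{p/q}$ lie in $\D(1,\epsilon)$ by Proposition~\ref{M1limbs}, and the $\chi^{-1}$-images of the remaining finitely many limbs form a compact subset of $\m\setminus\{7\}$, hence are bounded away from $a=7$, so that $a_n$ near $7$ is forced into the closure of the main component (where the multiplier map sends $a=7$ to $B=1$) or into a limb whose image lies in $\D(1,\epsilon)$.
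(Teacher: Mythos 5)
Your proof is correct, but it reaches continuity at the root point by a genuinely different route than the paper. The paper argues geometrically: it first extends $\chi$ to a homeomorphism between the closures of the main components (sending $a=7$ to $B=1$), and then handles sequences lying in limbs by combining the correspondence of limbs with the shrinking-limb statements — Proposition \ref{M1limbs} for $\mathcal{M}_1$ and its analogue for $\m$ coming from the Yoccoz inequality of \cite{BL2} — so that points of $\m$ close to $a=7$ are forced into limbs $L_{p/q}$ with $p/q$ small, whose images lie in limbs $L'_{p/q}$ of small diameter rooted near $B=1$. You instead observe that once Corollary \ref{almosthomeo} is in hand, the extension is automatic by pure point-set topology: a homeomorphism between the complements of single points in compact Hausdorff spaces always extends to a homeomorphism of the compact spaces (in effect, uniqueness of the one-point compactification), and your subsequence argument at $a=7$ is a correct implementation of this — the sequence $\chi(a_{n_k})$ eventually avoids $1$, lies in $\mathcal{M}_1\setminus\{1\}$, and continuity of $\chi^{-1}$ at $B^*\ne 1$ yields the contradiction with $a_{n_k}\to 7$; the compact-to-Hausdorff criterion then finishes. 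What your approach buys is economy: for this corollary you need no limb combinatorics at all, only Corollary \ref{almosthomeo}, compactness of $\m$ (Lemma \ref{cr} plus $\m\subset\overline{\D}(4,3)$) and compactness of $\mathcal{M}_1$ — the latter is not proved in the paper and should be cited (it follows from \cite{M1} or \cite{PR}); note also that the limb machinery is in any case indispensable one step earlier, in Proposition \ref{surj_prop}, so the paper's geometric argument here costs little extra. Your alternative second argument essentially reproduces the paper's proof, with the additional (correct, but worth justifying) assertion that hybrid equivalence preserves the combinatorial rotation number at the $\alpha$-fixed point, so that $\chi(L_{p/q})\subset L'_{p/q}$.
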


\begin{proof}
We have already established that $\chi$ is a homeomorphism between $\m\setminus\{7\}$ and $\mathcal{M}_1\setminus\{1\}$. Setting $\chi(7)=1$ extends 
$\chi$ restricted to the main components of $\mathring \m$ and $\mathring {\mathcal M}_1$ to a homeomorphism between these components
union the points $a=7$ and $B=1$ respectively. It remains to consider sequences in the limbs of $\m$ and ${\mathcal M}_1$ converging to $a=7$ and
$B=1$ respectively. But when $p/q$ 
converges to zero, the diameters of the $p/q$-limbs converge to zero, and their roots converge to $a=7$ and $B=1$ respectively, so we deduce that 
$\chi$ is a homeomorphism from $\m$ to $\mathcal{M}_1$.
\end{proof}

This completes the proof of the Main Theorem, since we have already proved that $\chi$ has properties (i), (ii) and (iii) of the statement of the theorem.
Property (i) is Proposition \ref{welldefined}, property (ii) is Propositions \ref{hypercom} and \ref{queer}, and property (iii) is Proposition \ref{homeo_on_lune}.

\section{Proof of the Corollary to the Main Theorem}\label{proofcor}
In \cite{BL1} we defined $\mathcal{C}_{\Gamma}$ to be the connectedness locus for the family 
$\F_a$, where we let $a$ vary over the whole of the Klein combination locus $\mathcal{K} \subset \C$.
We defined the modular Mandelbrot set $\m$ to be
the intersection of ${\mathcal C}_\Gamma$ with the closed disc of centre $4$ and radius $3$ (which we know to be contained in 
${\mathcal K}$ since for $a \in {\overline \D(4,3)}$ the standard fundamental domains $\Delta^{st}_{Cov},\Delta^{st}_J$ are 
a Klein combination pair):
$$\m:= \mathcal{C}_{\Gamma} \cap {\overline \D(4,3)}.$$
We can now prove that $\m$ is the whole of $\mathcal{C}_{\Gamma}$ (in other words 
$\mathcal{C}_{\Gamma}\subset {\overline \D(4,3)}$).

\begin{mcor}\label{5}
 The modular Mandelbrot set $\m$ is the whole connectedness locus $\mathcal{C}_{\Gamma}$ of the family $\F_a$.
\end{mcor}

\begin{proof}
 Suppose there exists $a \in \mathcal{C}_{\Gamma} \setminus \m$.
 Then the corresponding $\F_a$ is a mating between the modular group and a rational map $P_A(z)= z+1/z + A,\,\,A \in \C$ (see the Main Theorem in \cite{BL1}). Since $a \in \mathcal{C}_{\Gamma}$, the correspondence $\F_a$ has connected limit set, therefore $P_A$ has connected Julia set, 
 and so $B=1-A^2 \in \mathcal{M}_1$. Moreover $\F_a$ is hybrid conjugate to $P_A$ on doubly pinched neighbourhoods of $\Lambda_-(\F_a)$ and $K(P_A)$ respectively, by the Main Theorem in \cite{BL1}.
 Since $\chi: \m \rightarrow \mathcal{M}_1$ is a homeomorphism, there exists $a' \in \m$ such that $\chi(a')=B$, and $\F_{a'}$ is hybrid equivalent to
 $P_A$ on doubly pinched neighbourhoods of $\Lambda_-(\F_a)$ and $K(P_A)$ respectively. But then $\F_a$ is hybrid equivalent to $\F_{a'}$ on quadruply pinched neighbourhoods of $\Lambda_a=\Lambda_{a,-}\cup \Lambda_{a,+}$ and $\Lambda_{a'}=\Lambda_{a',-}\cup \Lambda_{a',+}$ respectively by Proposition \ref{doublypinchedconjugacy}, 
and the proof of Proposition \ref{inj} shows that $\F_a$ is conformally conjugate to $\F_{a'}$, and hence that $a=a'$ by Lemma \ref{unique}.
\end{proof}

\appendix
\section{Appendix: Dynamical Lunes}
\medskip
Recall that in the parameter plane ${\mathcal L}_\theta$ denotes the open lune bounded by the two arcs of circles passing through 
$a=1$ and $a=7$ which at $a=1$ have tangents at angles $\pm \theta$ to the positive real axis. Analogously,
in the dynamical plane, coordinatised by $Z$, we let $L_a$ denote the open lune bounded by the two 
arcs of circles passing through 
$Z=1$ and $Z=a$ which at $Z=1$ have tangents at angles $\pm \theta$ to the positive real axis. Recall that $Z=1$ is the persistent 
parabolic fixed point, and that our coordinate-free notation for this point is $P_a$.
In this Appendix we 
prove: 

\begin{prop} \label{dynamic-lune}
Given any $\theta$ in the range $\pi/3\le \theta \le \pi/2$, for every parameter value $a \in {\mathcal L}_\theta\cup \{7\}$ the image
${\mathcal F}_a (\overline{L}_a)$ of  $\overline{L}_a$ is contained in $L_a \cup\{P_a\}$.
\end{prop}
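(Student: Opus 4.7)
I would work in the coordinate $z' = (a-1)(Z-1)/(a-Z)$, which sends the persistent parabolic fixed point $P_a$ to $0$, the involution $J_a$ to $z' \mapsto -z'$, and the dynamical lune $L_a$ to the fixed sector $S_\theta := \{z' : |\arg z'| < \theta\}$, independent of $a$. The correspondence $\mathcal{F}_a$ depends holomorphically on $a$ in these coordinates, and the claim becomes $\mathcal{F}_a(\overline{S_\theta}) \subset S_\theta \cup \{0\}$ for every $a \in \mathcal{L}_\theta \cup \{7\}$.

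The heart of the argument is a local analysis at the cusp $P_a = 0$. The parabolic branch of $\mathcal{F}_a$ has expansion
\[
z' \longmapsto z' + c(a)(z')^2 + O((z')^3), \qquad c(a) = \frac{a-7}{3(a-1)},
\]
and a direct calculation gives $\arg c(a) = \pi - \arg((a-1)/(7-a))$, so the hypothesis $a \in \mathcal{L}_\theta$ forces $\arg c(a) \in (\pi - \theta, \pi + \theta)$. For $z' = re^{\pm i\theta}$ on $\partial S_\theta$, the image has argument $\pm\theta + r|c(a)|\sin(\pm\theta + \arg c(a)) + O(r^2)$. Using $\theta \in [\pi/3, \pi/2]$, we check that $+\theta + \arg c(a) \in (\pi, \pi + 2\theta) \subset (\pi, 2\pi)$ (so $\sin < 0$) and $-\theta + \arg c(a) \in (\pi - 2\theta, \pi) \subset (0, \pi)$ (so $\sin > 0$); the leading-order angular correction thus pushes both boundary rays strictly into $S_\theta$ near $P_a$. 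The non-parabolic branch at $P_a$ sends $P_a$ to a single point $J_a(-2)$, which in the $z'$-coordinate equals $3(a-1)/(a+2)$; an elementary argument bound on $(a-1)/(a+2)$ (using the defining inequality of $\mathcal{L}_\theta$) places this point in $S_\theta$. The degenerate case $a = 7$, where $c(7) = 0$, is handled by computing the next term $d(7)(z')^3$ and repeating the sine analysis at order $r^2$, giving the same strict inward rotation.

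Next, I would globalize the local control by showing that the set $A := \{a \in \mathcal{L}_\theta \cup \{7\} : \mathcal{F}_a(\overline{L_a}) \subset L_a \cup \{P_a\}\}$ is clopen and non-empty in the connected set $\mathcal{L}_\theta \cup \{7\}$. Non-emptiness is verified at $a = 7$ (or at any convenient real base parameter), where the correspondence has reflection symmetry and the claim reduces to a finite explicit check. Openness follows from the local estimate combined with compactness: for $a_0 \in A$ and small $\epsilon > 0$, the compact set $\overline{S_\theta} \cap \{|z'| \ge \epsilon\}$ is mapped into the open set $S_\theta$, so continuity in $a$ preserves this for $a$ near $a_0$; the neighbourhood $\overline{S_\theta} \cap \{|z'| \le \epsilon\}$ is handled by the strict local estimates of Step 2, which are uniform in $a$ on small neighbourhoods of $a_0$. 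Closedness follows from continuity of $\mathcal{F}_a$ together with the open mapping theorem: a limit $a_n \to a_\infty$ with $a_n \in A$ gives $\mathcal{F}_{a_\infty}(\overline{L_{a_\infty}}) \subset \overline{L_{a_\infty}}$, and a tangential contact of the image with $\partial L_{a_\infty}$ at an interior point would violate the strict local estimates of Step 2 near the adjacent vertex by holomorphic continuation.

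The main obstacle will be the closedness step, where one must rule out a degenerate tangential contact of the image curve with $\partial L_{a_\infty}$ at a non-vertex point. The local analysis of Step 2 is essentially sharp and controls only a neighbourhood of $P_a$. If the open-mapping argument is found too fragile, I would supplement it with a direct parametric verification using the defining equation (1.1) of $\mathcal{F}_a$: parameterize the boundary rays of $S_\theta$ as $\{te^{\pm i\theta} : t \ge 0\}$, write the two image branches as explicit rational functions of $t$ and $a$ via the quadratic $W^2 + WZ + Z^2 = 3$ governing $Cov_0^Q$ composed with $J_a$, and reduce the claim to a sign condition on a polynomial in $t$ controlled by the vertex estimates at $t = 0$ and $t = \infty$ together with an intermediate-value argument in between.
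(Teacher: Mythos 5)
Your local computation at the parabolic vertex is essentially correct (the expansion $z'\mapsto z'+c(a)z'^2+\cdots$ with $c(a)=\tfrac{a-7}{3(a-1)}$, the signs of $\sin(\pm\theta+\arg c(a))$, and the location $z'=3(a-1)/(a+2)$ of $J_a(-2)$ all check out), but notice that it only ever uses $\theta\le\pi/2$: the hypothesis $\theta\ge\pi/3$ appears nowhere in your argument. The Remark following Lemma \ref{intersections} states that $\pi/3$ is best possible for Proposition \ref{dynamic-lune}, so an argument blind to that constant cannot be complete; the constant must enter through exactly the two steps you leave unproved. First, the base case of your clopen argument is not ``a finite explicit check'': at $a=7$ (or any real base parameter) the assertion $\mathcal{F}_a(\overline{L}_a)\subset L_a\cup\{P_a\}$ is already the full global statement that the image of the lune boundary --- an explicit algebraic curve with a cusp at $J_a(-2)$ --- meets the lune boundary only at the vertex. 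That is precisely the content of Lemma \ref{intersections}, which the paper proves by lifting to the $W$-plane via $Z=W+1/W$, writing polynomial equations for $\widetilde{C}_d$ and its rotations by $e^{\pm 2\pi i/3}$, and showing by a resultant computation that the relevant quartic $Q(U)$ has no real zeros exactly when $|d|=3|\cot\theta|\le\sqrt{3}$. Reflection symmetry halves that work but does not make it finite, and this is where $\theta\ge\pi/3$ is used, sharply.

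Second, the closedness step, which you yourself flag, is a genuine gap: a limit $a_n\to a_\infty$ with $a_n\in A$ yields only $\mathcal{F}_{a_\infty}(\overline{L}_{a_\infty})\subset\overline{L}_{a_\infty}$, and your vertex-local estimates cannot exclude tangential contact of the image with $\partial L_{a_\infty}$ at points a definite distance from the vertices. This is not a removable technicality: the paper's bifurcation analysis shows that as $\theta$ decreases through $\pi/3$ new crossings of $C_d$ with $Cov_0^Q(C_d)$ are created by third-order contact (the curves already agree in tangent and curvature at $Z=1$), a phenomenon invisible to your leading-order sector estimate, which for each fixed $a$ in the open lune predicts strict inward push for every $\theta\le\pi/2$. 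Your fallback --- parametrising the boundary rays and reducing to ``a sign condition on a polynomial in $t$'' --- is in effect the paper's proof, but unexecuted, and an intermediate-value argument anchored only at $t=0$ and $t=\infty$ cannot by itself rule out an even number of boundary crossings in between. You also never treat the second vertex $Z=a$ (i.e.\ $z'=\infty$) of $\overline{L}_a$, whose two images under $\mathcal{F}_a$ must be shown to lie in $L_a$. In short: the local picture is right, but the global core of the proposition --- where the resultant computation and the hypothesis $\theta\ge\pi/3$ do the work --- is missing from your proposal.
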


\medskip
Note that here ${\mathcal F}_a: \overline{L}_a \to {\mathcal F}_a (\overline{L}_a)$ is a $1$-to-$2$ correspondence. 
Note also that in the coordinate $z'=(a-1)(Z-1)/(a-Z)$, the points $Z=1$ and $Z=a$ become 
$z'=0$ and $z'=\infty$ respectively and that as the coordinate change from $Z$ to $z'$ has 
derivative $1$ at $Z=1$, 
the boundary of the lune $L_a$ is carried to a pair of fixed straight lines through the origin in the $z'$-plane 
at angles $\pm\theta$ to the positive real axis. Thus in the coordinate $z=(Z-1)/(a-Z)$ the lune $L_a$ is no longer independent 
of $a$, but the points in it move holomorphically with $a$.

\medskip
Consider $J(L_a)$, and observe that since $J:z\leftrightarrow -z$ in the $z$-coordinate, 
$J(L_a)=-L_a$ in this coordinate. 
We remark that $J(L_a)$ is contained in the 
{\it standard fundamental domain} $\Delta_J$ for $J$, which is defined in the $Z$-coordinate \cite{BL1} 
as the complement
of the round disc which has centre on the real $Z$-axis and boundary circle passing 
through the points $Z=1$ and $Z=a$. The following is an immediate consequence of 
Proposition \ref{dynamic-lune}:

\begin{cor}\label{universal-lune}
For $\pi/3\le \theta\le \pi/2$,  

$a\in {\mathcal L}_\theta \cup \{7\}  \Rightarrow 
{\mathcal F}_a^{-1}(\overline{J(L_a)})\subset J(L_a)\cup\{P_a\}$, and hence 

(i) $\Lambda_{-,a} \subset J(L_a)\cup\{P_a\}$;

(ii) $\Lambda_{-,a} \cap \partial J(L_a)=\{P_a\}.$

 \end{cor}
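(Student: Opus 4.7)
Proof proposal:

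The plan is to reduce the statement to a boundary computation by working in the coordinate $z' = (a-1)(Z-1)/(a-Z)$, in which the lune $L_a$ becomes the fixed wedge $W_\theta = \{z' : |\arg z'| < \theta\}$, independent of $a$; the parabolic fixed point $P_a$ sits at the origin; the other fixed point $Z=a$ of $J_a$ sits at infinity; and the involution $J_a$ acts as $z' \mapsto -z'$. Substituting $Z = (az' + (a-1))/(z' + (a-1))$ together with $J_a(W) = -z'_2$ into the deleted covering relation $W^2 + WZ + Z^2 = 3$ and clearing denominators produces an explicit polynomial relation $R(z'_1, z'_2; a) = 0$ of bidegree $(2,2)$ vanishing at $(0,0)$. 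The proposition then becomes the assertion that for every $a \in \L\f \cup \{7\}$, both roots $z'_2$ of $R(z'_1, \cdot\,; a) = 0$ lie in $W_\theta \cup \{0\}$ whenever $z'_1 \in \overline{W_\theta}$.

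My approach is first to reduce to a check on the two boundary rays $\{re^{\pm i\theta} : r > 0\}$ and then to trace the two image branches $r \mapsto z'_2(r)$ as $r$ varies in $(0,\infty)$. By the conjugation symmetry $z' \mapsto \bar{z'}$, $a \mapsto \bar{a}$ (which preserves both $\L\f$ and $W_\theta$) it suffices to consider the upper ray, and the endpoint behaviour of $z'_2(r)$ is readily controlled. As $r \to 0^+$ the parabolic normal form $\mathcal{F}_a(z') = z' + \frac{a-7}{3(a-1)} z'^2 + \ldots$ governs the local picture, and the constraint $a \in \L\f \cup\{7\}$ is precisely what forces the repelling direction $\arg((a-1)/(a-7))$ to lie in $(-\theta,\theta)$, so the non-identity branch leaves the origin inside $W_\theta$. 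As $r \to \infty$ we have $Z_1 \to a$, and the two image points are the explicit values $z'_2 = -\phi_a(W)$ with $W = (-a \pm \sqrt{12-3a^2})/2$, which are then verified by direct computation to lie in $W_\theta$ for every $a \in \L\f$ (a bounded set). The crucial step is to show that no boundary crossing occurs for intermediate $r$: imposing $z'_1 = re^{i\theta}$ and $z'_2 = \rho e^{\pm i\theta}$ with $r, \rho > 0$ in $R = 0$ and separating real and imaginary parts gives two real polynomial equations whose resultant in $(r,\rho)$ is an algebraic condition $S(a;\theta)$ on the parameter; any boundary-crossing parameter must satisfy $S(a;\theta) = 0$.

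The main obstacle is to exclude the vanishing locus of $S(\cdot\,;\theta)$ from $\L\f \cup\{7\}$ for every $\theta \in [\pi/3, \pi/2]$. I would first handle the extremal case $\theta = \pi/2$ (where $\L\f$ coincides with the open disc $|a-4|<3$ and the algebra is most tractable), treat the distinguished parameter $a = 7$ separately using the degeneracy $\mathcal{F}_7(z') = z' + Cz'^3 + \ldots$ with $C$ real and negative (which locally pushes both boundary rays of $W_\theta$ strictly into the opposite wedge), and then address the remaining $(a,\theta)$ by a continuity argument in the joint parameters together with explicit algebraic manipulation of $R$, using in essence that the repelling dynamics of $\mathcal{F}_a$ at $P_a$ is aligned with the interior of $W_\theta$ whenever $a \in \L\f$. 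Finally, upgrading the strict boundary inclusion $\mathcal{F}_a(\partial L_a \setminus \{P_a\}) \subset L_a$ to the interior statement $\mathcal{F}_a(\overline{L_a}) \subset L_a \cup \{P_a\}$ is a short topological/degree argument using the open mapping property of each branch of $\mathcal{F}_a$ together with the connectedness of $\overline{L_a}$.
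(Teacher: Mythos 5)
Your proposal does not actually establish the statement: it is a plan whose crucial step is left as an unexecuted computation. After the (reasonable) reduction to showing that the boundary rays of the wedge $W_\theta$ are not met by the images of points of $\overline{W_\theta}$ except at the cusp, everything hinges on excluding the vanishing locus of your resultant condition $S(a;\theta)$ from $\mathcal{L}_\theta\cup\{7\}$ for all $\theta\in[\pi/3,\pi/2]$. This is a semialgebraic problem in three real parameters ($\mathrm{Re}\,a$, $\mathrm{Im}\,a$, $\theta$), and "a continuity argument in the joint parameters together with explicit algebraic manipulation of $R$" is not an argument: continuity only tells you that boundary crossings are born on the locus $S=0$, so the entire content is precisely the verification that $S\neq 0$ there, which you never indicate how to do. Note also that the bound $\theta\ge\pi/3$ is sharp (new intersections do bifurcate out of the cusp as soon as the angle drops below $\pi/3$), so any correct argument must use this hypothesis quantitatively at exactly the point you leave blank; likewise your endpoint claim that the images of $Z=a$ lie in the wedge "for every $a\in\mathcal{L}_\theta$" is an unverified two-real-parameter assertion, not a computation.

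For comparison, the paper's route removes the parameter $a$ from the hard computation before doing any algebra, and this is what makes the verification feasible. The corollary itself is a one-line consequence of Proposition \ref{dynamic-lune} via $\F_a^{-1}=J\circ\F_a\circ J$ and $J(P_a)=P_a$ (plus the definition of $\Lambda_{a,-}$ for (i) and (ii), which your proposal does not address at all). The proposition is proved by observing that for $a\in\mathcal{L}_\theta$ the discs $D_{a,\pm\theta}$ bounded by the circles through $Z=1$ and $Z=a$ at angles $\pm\theta$ are contained in the fixed discs $D_{\pm d}$ bounded by the circles through $Z=1$ and $Z=7$ with $d=3\cot\theta$ --- the very circles bounding the parameter lune --- so it suffices to show that each $C_d$, $|d|\le\sqrt 3$, meets its image under $Cov_0^Q$ only at $Z=1$. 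That is a one-real-parameter family of checks, and lifting to the $W$-plane via $Z=W+1/W$, where $Cov_0^Q$ becomes rotation by $\pm 2\pi/3$, turns it into an explicit resultant $P(U)=2304(d^2+9)(U+1)^4Q(U)$ whose factor $Q$ is then shown to have no real zeros for $0<|d|<\sqrt 3$ by a second resultant. If you want to pursue your $z'$-coordinate framework, you should look for an analogous reduction that eliminates the dependence on $a$ (e.g.\ by majorizing $L_a$ inside an $a$-independent region) before attempting the resultant; as written, the proposal has a genuine gap at its central step.
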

In \cite{BL2} we proved that there exists $\theta$ in the half-open interval $[\pi/3, \pi/2)$ such that $\L\f$ is a neighbourhood of $\m\setminus\{7\}$, pinched at $a=7$. 
Corollary \ref{universal-lune} tells us that for all $a$ in this pinched neighbourhood, the holomorphically moving dynamical lune $J(L_a)$ contains $\Lambda_{-,a}\setminus \{P_a\}$.

\subsubsection{Proof of Proposition \ref{dynamic-lune}}

Working in the $Z$-coordinate, let $C_d$ denote the circle in the dynamical plane which passes through the 
points $Z=1$ and $Z=7$ and which has 
centre at the point $4-di$ (where $d$ is real). As usual let $Q(Z)=Z^3-3Z$.

\begin{lemma}\label{intersections}
For every $-\sqrt{3} \le d \le \sqrt{3}$, the circle $C_d$ meets its image under $Cov_0^Q$ uniquely at the point $Z=1$.
\end{lemma}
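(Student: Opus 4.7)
A point belongs to $C_d \cap Cov_0^Q(C_d)$ exactly when there exist $Z,W \in C_d$ with $(Z,W)$ on the algebraic graph $\{Z^2 + ZW + W^2 = 3\}$ of $Cov_0^Q$ (including its extension at the critical points where $Q'(Z)=0$). Since $(Z,W) = (1,1)$ is on the graph and $1 \in C_d$, the point $Z = 1$ is one such intersection. My plan is to show that it is the only one, by recasting the lemma as the assertion that the unique solution of the system
\[
Z^2 + ZW + W^2 = 3, \qquad |Z - c|^2 = |W - c|^2 = R^2
\]
(with $c = 4 - di$ and $R^2 = 9 + d^2$) is $Z = W = 1$, and attacking this system by algebraic elimination.

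I would introduce the symmetric coordinates $\sigma = Z + W$ and $\delta = Z - W$. The graph equation becomes $\delta^2 = 3(4 - \sigma^2)$, while adding and subtracting the two circle conditions gives
\[
|\sigma - 2c|^2 + |\delta|^2 = 4R^2, \qquad \operatorname{Re}\bigl((\sigma - 2c)\overline{\delta}\bigr) = 0,
\]
so that $\sigma - 2c$ and $\delta$ are orthogonal as real vectors in $\C$. If $\delta = 0$, the graph equation forces $\sigma = \pm 2$, and the identity $|{-1} - c|^2 = 25 + d^2 \neq R^2$ rules out $\sigma = -2$, leaving only $Z = W = 1$. If $\delta \neq 0$, orthogonality allows me to write $\sigma - 2c = i\mu\delta$ for a real parameter $\mu$; substituting $\sigma = 2c + i\mu\delta$ into the graph equation yields a complex quadratic in $\delta$,
\[
(1 - 3\mu^2)\,\delta^2 + 12ci\mu\,\delta + 12(c^2 - 1) = 0,
\]
whose roots must additionally satisfy the modulus constraint $|\delta|^2 = 4R^2/(1 + \mu^2)$.

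The plan is then to show that this combined system has no solutions with $\mu \in \R$ and $d \in [-\sqrt{3}, \sqrt{3}]$. Solving the quadratic by the usual formula, computing $|\delta|^2$, and equating with $4R^2/(1 + \mu^2)$, then squaring once to clear the remaining surd, produces a polynomial relation $P(d, \mu) = 0$. I expect $P$ to factor explicitly, with each vanishing factor corresponding either to a degenerate value of $\mu$ (where the quadratic in $\delta$ collapses to a linear equation whose unique solution can be checked to fail the modulus equation) or to a spurious solution introduced by the squaring step (eliminated by verifying directly that $|Z - c| \neq R$ at that value). As a sanity check at $d = 0$, the squared equation reduces to $(V - 4)^2 = 0$ in the auxiliary variable $V = |Z + W|^2$, together with a spurious root $V = 6$ coming from the squaring: $V = 4$ forces $Z = W \in \{\pm 1\}$ and thus (by the circle condition) $Z = W = 1$, while at $V = 6$ an explicit computation shows $|Z - c|/R \neq 1$ and so no genuine intersection arises. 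The main technical obstacle is producing this factorization of $P$ in closed form for general $d$ and carrying out the analogous case-by-case verification uniformly on the stated range; the endpoints $d = \pm\sqrt{3}$ correspond geometrically to the circles $C_d$ that become tangent at $Z = 1$ to the boundary rays of the standard fundamental domain $\Delta^{st}_{Cov}$, which is the geometric content underlying the hypothesis $\theta \in [\pi/3, \pi/2]$.
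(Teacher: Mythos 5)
Your reduction is sound as far as it goes: the symmetric coordinates $\sigma=Z+W$, $\delta=Z-W$ correctly recast the lemma as the statement that the only solution of $\delta^2=3(4-\sigma^2)$, $|\sigma-2c|^2+|\delta|^2=4R^2$, $\operatorname{Re}\bigl((\sigma-2c)\overline{\delta}\bigr)=0$ is $\sigma=2$, $\delta=0$, and your treatment of the case $\delta=0$ (ruling out $Z=W=-1$ because $|-1-c|^2=25+d^2\neq 9+d^2$) is correct. But the entire content of the lemma lies in the case $\delta\neq 0$, and there your argument stops at a declaration of intent: you never exhibit the polynomial relation $P(d,\mu)=0$, never produce the factorization you ``expect'', and never carry out the factor-by-factor elimination; the only value actually checked is $d=0$, which says nothing about the rest of the interval. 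This is exactly where the mathematical work of the lemma sits. Moreover, the hoped-for outcome (a clean closed-form factorization whose factors can all be discarded uniformly for $|d|\le\sqrt{3}$) cannot be taken for granted: genuine new intersections of $C_d$ with $Cov_0^Q(C_d)$ do bifurcate out of $Z=1$ as $|d|$ crosses $\sqrt{3}$ (see the Remark at the end of the Appendix), so any correct argument must detect the threshold $|d|=\sqrt{3}$ exactly, which in practice forces a discriminant-type analysis in $d$ rather than a pointwise case check.

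For comparison, the paper does this step by lifting to the double cover $Z=W+1/W$, where $Cov_0^Q$ becomes rotation by $\pm 2\pi/3$, writing the lifted circle and its rotations as explicit real quartics in $(U,V)$, and computing their resultant in $V$: it factors as $2304(d^2+9)(U+1)^4Q(U)$ with $Q$ an explicit quartic in $U$ depending on $d$. One then checks $Q>0$ on $\mathbb{R}$ at $d=0$, argues that a real zero of $Q$ can only be born at a value of $d$ where $\operatorname{Res}(Q,Q')=0$, and computes that this resultant equals $-2^{16}3^{7}d^{4}(d^2+25)(d^2-3)(d^2+24)^2$, which is nonzero for $0<|d|<\sqrt{3}$, with the endpoints $|d|=\sqrt{3}$ handled by the explicit factorization $Q(U)=(U+1)^2(28U^2-16U+64)$. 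Your route (eliminate $\delta$ from the quadratic plus the modulus constraint, then study the resulting polynomial in $\mu$ with a discriminant argument in $d$) could plausibly be completed in the same spirit, but as written the decisive computation is absent, so the proposal does not yet prove the lemma.
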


Assuming this lemma (proved below), the proof of Proposition \ref{dynamic-lune} proceeds as follows. 
Let $\pi/3\le\theta\le \pi/2$.
Setting $d=3\cot{\theta}$, we note that the upper boundary arc of the parameter space lune ${\mathcal L}_\theta$ is the 
upper arc of $C_d$ from $a=1$ to $a=7$, and the lower boundary arc of ${\mathcal L}_\theta$ is the lower arc of $C_{-d}$ 
between these two points. 

Now let  $a \in {\mathcal L}_\theta\cup \{7\}$, and let $C_{a,\theta}$ denote the circle in dynamical space through $Z=1$ and 
$Z=a$ which has tangent at angle $\theta$ to the real axis at $Z=1$. The upper boundary of the dynamical 
space lune $L_a$ is the upper arc of $C_{a,\theta}$ from $Z=1$ to $Z=a$ and the lower boundary is the lower 
arc of $C_{a,-\theta}$ between these two points. If we let $D_d$ denote the open disc bounded by $C_d$, and $D_{a,\theta}$ 
denote the open disc bounded by $C_{a,\theta}$, we see that

(i) $D_{a,\theta}\subseteq D_d$ and $D_{a,-\theta}\subseteq D_{-d}$;

(ii) $L_a=D_{a,\theta}\cap D_{a,-\theta}$.

But from Lemma \ref{intersections}  we know that $Cov^Q_0(\overline{D}_d)$ meets $\overline{D}_d$ at the single point $Z=1$,
and since the involution $J$ sends the exterior of $D_a$ to its interior, we deduce from (i) that 
${\mathcal F}_a(\overline{D}_{a,\theta})\subset D_{a,\theta}\cup\{1\}$. Similarly 
${\mathcal F}_a(\overline{D}_{a,-\theta})\subset D_{a,-\theta}\cup\{1\}$ and the observation (ii) yields the 
statement of the Proposition. \qed


\subsubsection{Proof of Lemma \ref{intersections}}

The correspondence $Cov_0^Q$ sends $Z \to Z'$ where
$$\frac{Q(Z)-Q(Z')}{Z-Z'}=0 \quad\quad i.e. \quad Z'^2+ZZ'+Z'^2=3$$
$$ i.e. 
\quad Z'=\frac{-Z}{2}\pm\sqrt{3\left(1-\left(\frac{Z}{2}\right)^2\right)}$$



\begin{figure}
\begin{center}
\scalebox{.4}{\includegraphics{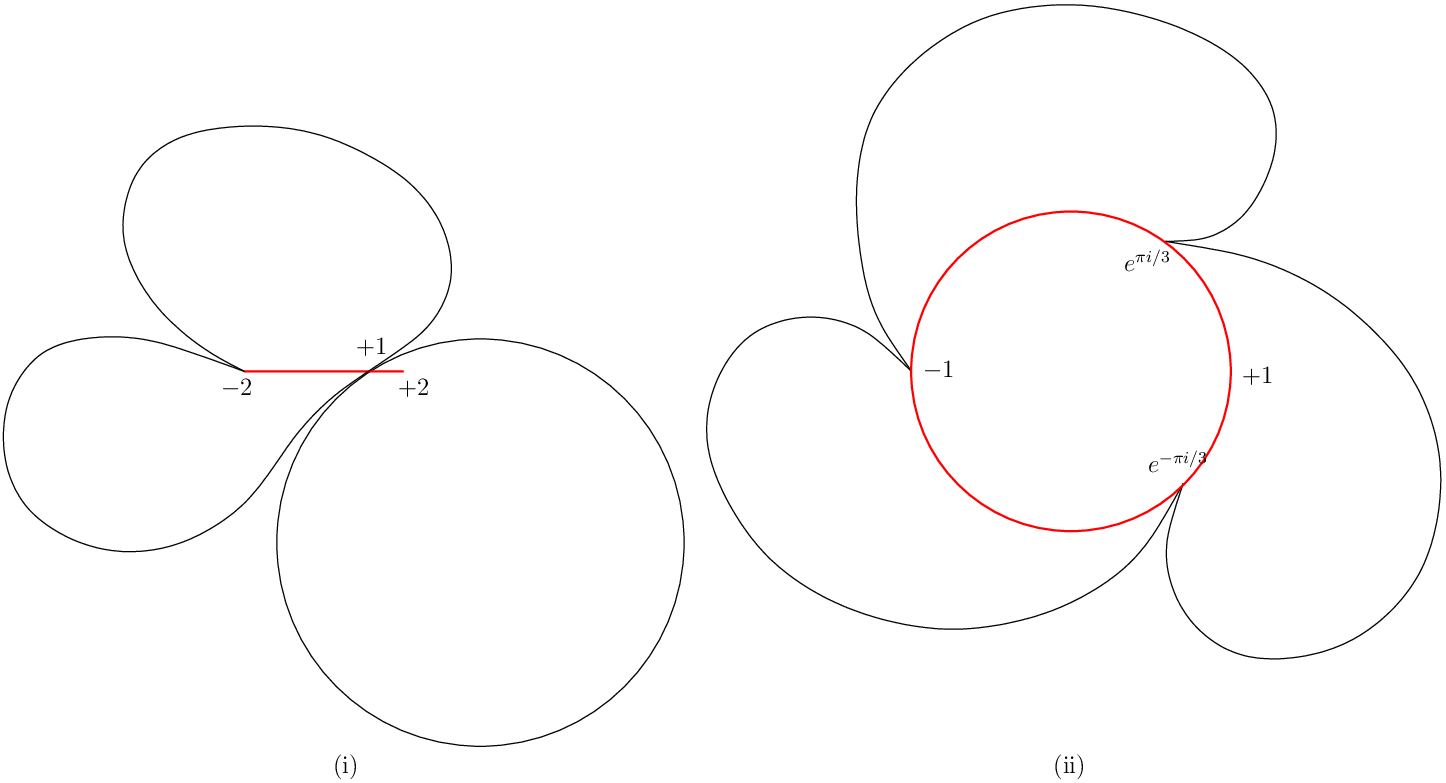}}
\caption{\small Sketch of (i) $C_d$ and $Cov_0^Q(C_d)$ in the $Z$-plane, and (ii) their lifts to the $W$-plane. (Only the lifts outside
the circle $|W|=1$ are shown: applying $W\to 1/W$ to these gives copies inside the circle).}\label{int}
\end{center}
\end{figure}

The image of $C_d$ under $Cov_0^Q$ is a topological circle which wraps twice around $C_d$ under the inverse of 
$Cov_0^Q$, a two-to-one map (see Figure \ref{int}(i)). This topological circle has a cusp at $Z=-2$ (the `other' inverse image of the fixed point 
$Z=1$) and it has a self-intersection near to the cusp if and only if it intersects $C_d$ near $Z=1$. The tangent and curvature of 
$Cov_0^Q(C_d)$ at $Z=1$ are identical to those of $C_d$ at $Z=1$, whatever the value of $d$, and the transition of 
intersection behaviour at the point $Z=1$ when $d=\sqrt{3}$ is associated with the values of the third (or higher) derivatives 
of the two curves at this point.  

The full picture is best seen in the $W$-plane double-covering the $Z$-plane via $W\to Z=W+1/W$ (Figure \ref{int}(ii)).
To investigate the intersections we pass to the $W$-plane, where $Cov^Q_0$ 
lifts to the $2$-valued map defined by multiplication by $e^{\pm 2\pi i /3}$, and where we can 
write explicit polynomial equations for the lifts of the various curves.

Writing $Z=X+iY$ and $W=U+iV$, the equation $W+1/W=Z$ gives:
$$U+iV+\frac{U-iV}{U^2+V^2}=X+iY.$$
Thus
$$X=U\left(\frac{U^2+V^2+1}{U^2+V^2}\right) \quad {\rm and} \quad Y=V\left(\frac{U^2+V^2-1}{U^2+V^2}\right).$$
The equation for $C_d$ is:
$$(X-4)^2+(Y+d)^2=9+d^2$$
and the equation of the lift ${\tilde C}_d$ of $C_d$ to the $W$-plane is therefore:
$$\left(U\left(\frac{U^2+V^2+1}{U^2+V^2}\right)-4\right)^2+\left(V\left(\frac{U^2+V^2-1}{U^2+V^2}\right)+d\right)^2=9+d^2.$$
Multiplying through by $(U^2+V^2)^2$ and re-arranging terms we find an expression which has $U^2+V^2$ as a
factor, and when this factor is divided out we obtain as equation for ${\tilde C}_d$:
\begin{equation} \label{1}
(U^2+V^2)(U^2+V^2-8U+7) +2d V(U^2+V^2-1) +2U^2-2V^2-8U+1 =0
\end{equation}

The map $\times e^{2\pi i/3}$ sends $(U,V)$ to $(U',V')$ where
$$\left(\begin{array}{c} U' \\ V' \end{array}\right)=
\left(\begin{array}{c} -U/2-\sqrt{3}V/2 \\ \sqrt{3}U/2-V/2 \end{array}\right).$$
That is,
$$\left(\begin{array}{c} U \\ V \end{array}\right)=
\left(\begin{array}{c} -U'/2+\sqrt{3}V'/2 \\ -\sqrt{3}U'/2-V'/2 \end{array}\right).$$
Substituting these expressions into equation (\ref{1}) gives us
$$(U'^2+V'^2)\left(U'^2+V'^2+4(U'-\sqrt{3}V')+7\right)-d(\sqrt{3}U'+V')(U'^2+V'^2-1)$$
$$-U'^2+V'^2-2\sqrt{3}U'V'+4(U'-\sqrt{3}V')+1=0.$$
Thus after rotation through $2\pi/3$ about $(0,0)$ in the $(U,V)$-plane, ${\tilde C}_d$ becomes

\begin{multline} \label{2}
 (U^2+V^2)\left(U^2+V^2+4(U-\sqrt{3}V)+7\right)-d(\sqrt{3}U+V)(U^2+V^2-1)\\
 -U^2+V^2-2\sqrt{3}UV+4(U-\sqrt{3}V)+1=0
\end{multline}
and after rotation through $-2\pi i/3$ the curve ${\tilde C}_d$ becomes:
\begin{multline}\label{3}
 (U^2+V^2)\left(U^2+V^2+4(U+\sqrt{3}V)+7\right)-d(-\sqrt{3}U+V)(U^2+V^2-1)\\
 -U^2+V^2+2\sqrt{3}UV+4(U+\sqrt{3}V)+1=0
\end{multline}
(which is just  (\ref{2}) with $\sqrt{3}$ replaced by $-\sqrt{3}$).
We now show that provided $|d| \le \sqrt{3}$ the curves (\ref{1}) and (\ref{2}) meet at the unique point 
$(U,V)=(1/2,\sqrt{3}/2)$. By rotational symmetry an equivalent problem is to show that the curves (\ref{2}) and 
(\ref{3}) have unique real intersection point $(U,V)=(-1,0)$.

The $U$-coordinates of the points of intersection of (\ref{2}) and (\ref{3}) are the zeros of the {\it resultant} of (\ref{2}) and (\ref{3}),
where these are regarded as polynomials in $V$ with coefficients polynomials in $U$. 

Recall that the resultant of the polynomials $a_4V^4+a_3V^3+a_2V^2+a_1V+a_0$ and $b_4V^4+b_3V^3+b_2V^2+b_1V+b_0$ is
the determinant
$$\left|\begin{array}{cccccccc}
a_4 & a_3 & a_2 & a_1 & a_0 & 0 & 0 & 0 \\
0 & a_4 & a_3 & a_2 & a_1 & a_0 & 0 & 0 \\
0 & 0 & a_4 & a_3 & a_2 & a_1 & a_0 & 0 \\
0 & 0 & 0 & a_4 & a_3 & a_2 & a_1 & a_0 \\
b_4 & b_3 & b_2 & b_1 & b_0 & 0 & 0 & 0 \\
0 & b_4 & b_3 & b_2 & b_1 & b_0 & 0 & 0 \\
0 & 0 & b_4 & b_3 & b_2 & b_1 & b_0 & 0 \\
0 & 0 & 0 & b_4 & b_3 & b_2 & b_1 & b_0 \\
\end{array}\right| $$
The resultant of (\ref{2}) and (\ref{3}) is the polynomial $P(U)$
given by the determinant above with

$$a_4=1, \quad a_3=-4\sqrt{3}-d, \quad a_2=2U^2+(4-d\sqrt{3})U+8$$
$$a_1=(-4\sqrt{3}-d)U^2-2\sqrt{3}U+(d-4\sqrt{3})$$
$$a_0=U^4+(4-d\sqrt{3})U^3+6U^2+(4+d\sqrt{3})U+1$$
and with $b_j (0\le j \le 4)$ obtained by substituting $-\sqrt{3}$ for $\sqrt{3}$ in $a_j$.

A computation using MAPLE gives:
$$P(U)=2304(d^2+9)(U+1)^4Q(U)$$
where
$$Q(U)=(d^2+25)U^4+40U^3+(96-12d^2)U^2+(64+16d^2)U+64.$$
When $d=0$, $Q(U)$ is a quartic which takes values $>0$ for all $U\in {\mathbb R}$, since then:
$$Q(U)=25U^4+40U^3+96U^2+64U+64=U^2(5U+4)^2+80(U+2/5)^2+256/5.$$
When $d=\pm\sqrt{3}$ we have
$$Q(U)=28U^4+40U^3+60U^2+112U+64=(U+1)^2(28U^2-16U+64)$$
which has unique real zero $U=-1$.

As we increase $|d|$ from $0$ to $\sqrt{3}$, the only way that a real zero of $Q(U)$ can be born is as a 
repeated zero, thus at a value of $d$ where the resultant of $Q(U)$ and its derivative $Q'(U)$ is zero. 
Another appeal to MAPLE tells us that this resultant is equal to:
$$-143327232d^4(d^2+25)(d^2-3)(d^2+24)^2$$ $$\left(=-2^{16}3^7d^4(d^2+25)(d^2-3)(d^2+24)^2\right)$$
and therefore that $Q(U)$ has no real zero for any value of $|d|$ in the interval $0<|d|<\sqrt{3}$, 
completing the proof of the lemma. \qed

\begin{remark}
By considering the expression for $P(U)$ for values of $d$ with $|d|$ equal to $\sqrt{3}+\varepsilon$ 
one can show that new real intersections of (2) and (3) bifurcate out of the intersection point 
$(U,V)=(-1,0)$ as $|d|$ passes through $\sqrt{3}$. Thus the value $\sqrt{3}$ in the statement 
of Lemma \ref{intersections} (and hence the value $\pi/3$ in the statement of Proposition \ref{dynamic-lune}) is best possible.
\end{remark}

\bigskip
\noindent
School of Mathematical Sciences,
Queen Mary University of London, 
London E1 4NS, UK 
\hfill s.r.bullett@qmul.ac.uk

\medskip

\noindent
Instituto de Matem\'atica Pura e Aplicada, 
Estrada Dona Castorina 110, Jardim Bot\^anico, Rio de Janeiro, RJ, 22460-320, Brasil
\hfill luna@impa.br



\begin{thebibliography}{13}

\bibitem[BH]{BH} S. Bullett, P. Haissinsky,  \textit{Pinching holomorphic correspondences}, Conform. Geom. Dyn. 11 (2007), 65--89.

 \bibitem[BL1]{BL1} S. Bullett, L. Lomonaco, \textit{Mating quadratic maps with the modular group II}, Invent. math., Vol $220$, ($2020$), 185--210.
 \bibitem[BL2]{BL2} S. Bullett, L. Lomonaco, \textit{Dynamics of Modular Matings}, Adv. Math.
410, Part B (2022), 108758.
 \bibitem[BP]{BP} S. Bullett, C. Penrose, \textit{Mating quadratic maps with the modular group}, Invent. math., Vol
$115$, ($1994$), 483--511.




\bibitem[DH]{DH} A. Douady \& J. H. Hubbard, \textit{On the Dynamics of Polynomial-like Mappings,} 
Ann. Sci. \'{E}cole Norm. Sup.,(4), Vol.$18$ ($1985$), 287--343.



\bibitem[L1]{L1}  L. Lomonaco, \textit{Parabolic-like maps}, Erg. Theory and Dyn. Syst. 35 (2015), 2171--2197.
\bibitem[L2]{L2}  L. Lomonaco, \textit{Parameter space for families of parabolic-like maps}, Adv. Math.  261 (2014), 200--219.


\bibitem[LLMM]{LLMM} S. Lee, M. Lyubich, N. Makarov, S. Mukherjee, \textit{Schwarz reflections and anti-holomorphic correspondences,}
Adv. Math. 385 (2021), Article 107766.

\bibitem[Lyu]{Lyu} M. Lyubich, \emph{Conformal Geometry and Dynamics of Quadratic Polynomials}, www.math.sunysb.edu/{\raise.17ex\hbox{$\scriptstyle\sim$}}mlyubich/book.pdf

\bibitem[M1]{M1} J. Milnor, \textit{On Rational Maps with Two Critical Points,} Experimental Mathematics, (4), Vol. 9 (2000), 481--522.
\bibitem[MSS]{MSS} R. Ma\~n\'e, P. Sad \& D. Sullivan, \textit{On the Dynamics of Rational maps,} Ann.
Sci. \'{E}cole Norm. Sup.,(4), Vol.$16$ ($1983$), 193--217.
 \bibitem[PR] {PR} C. Petersen, P. Roesch, \textit{The parabolic Mandelbrot set}, https://arxiv.org/pdf/2107.09407.
 \bibitem[PT] {PT} C. Petersen, L. Tan, \textit{Branner-Hubbard motions and attracting dynamics}, Dynamics on the Riemann sphere (eds. P Hjorth, 
 C. Petersen), Eur. Math. Soc. (2006), 45--70.
\bibitem [Sh]{Sh} M. Shishikura, \textit{Bifurcation of parabolic fixed points,} The Mandelbrot set, Theme and
Variations (ed. Tan Lei), London Math. Soc. Lecture Note Ser., 274, Cambridge Univ. Press, (2000), 325--363.
\bibitem[S]{S} D. Sullivan, \textit{Quasiconformal homeomorphisms and dynamics I. Solution of the Fatou-Julia problem on wandering domains}, 
Ann. of Math. 122 (1985), 401--418.

\end{thebibliography}
\end{document}